%% file: The_Shape_of_a_Random_Numerical_Semigroup.tex
\documentclass[12pt]{amsart}

\usepackage{amsthm}
\usepackage[usenames,dvipsnames]{xcolor}
\usepackage{graphicx}
\usepackage{stmaryrd} 
\usepackage{tikz}
\usepackage{tikz-cd}
\usepackage{ bbold }
\usetikzlibrary{arrows}
\usepackage{verbatim}
\usepackage{amssymb,amsfonts}
\usepackage[all,arc]{xy}
\usepackage{enumerate}
\usepackage{mathrsfs, mathabx}
\usepackage{xcolor}[2007/01/21]
\usepackage{hyperref}
\usepackage{soul}
\usepackage{esint} 
\usepackage{multirow} 
\usepackage[margin=1.0in]{geometry}
\usepackage[
   open,
   openlevel=2,
   atend
 ]{bookmark}[2011/12/02]
\usepackage{graphics}
\usepackage{enumitem,kantlipsum}
\usepackage{graphicx}

\usepackage{float}

\usepackage{subcaption}

\makeatletter
\renewcommand\p@subfigure{\thefigure}        
\makeatother

\def\S{{\mathcal S}}

\def\B{{\mathcal B}}

\def\E{{\mathbb E}}

\renewcommand{\phi}{\varphi}

\renewcommand{\P}{\mathbb{P}}
\newcommand{\cC}{\mathcal{C}}

\makeatletter
\def\@pmods#1{\mkern4mu({\operator@font mod}\mkern 6mu#1)}
\makeatother

\def\N{{\mathbb N}}
\def\R{{\mathbb R}}

\def\Ima{{\operatorname{Im}}}

\def\SS{{\mathcal S}}

\def\Var{{\operatorname{Var}}}

\newcommand{\Frob}{\text{Frob}}

\newtheorem{question}{Question}
\newtheorem{theorem}{Theorem}

\newtheorem{lemma}[theorem]{Lemma}

\newtheorem{proposition}[theorem]{Proposition}
\newtheorem{corollary}[theorem]{Corollary}

\begin{document}

\title{The shape of a random numerical semigroup}

\author{Maria Bras-Amor\'{o}s}
\address{Universitat Politècnica de Catalunya}
\email{maria.bras@upc.edu}

\author{Nathan Kaplan}
\address{Department of Mathematics, University of California, Irvine}
\email{nckaplan@math.uci.edu}

\author{Deepesh Singhal}
\address{Department of Mathematics, University of California, Irvine}
\email{singhald@uci.edu}

\keywords{Numerical semigroup; Genus of a numerical semigroup; Frobenius number}

\subjclass{20M14; 05A16}

\begin{abstract}
We study statistical properties of random numerical semigroups of a given genus.  We analyze the graph of a typical numerical semigroup, understood as a function from $\N$ to $\N$.  If $S$ is a numerical semigroup of genus $g$, this leads us to consider the collection of points $\left(\frac{k-1}{g-1},\frac{a_k(S)}{g} \right)$ where $1 \le k \le g$ and $a_k(S)$ denotes the $k$\textsuperscript{th} smallest nonzero element of $S$.  We show that as $g \rightarrow \infty$, this set of points typically becomes closer to a union of two line segments.
We prove analogous results for numerical semigroups ordered by Frobenius number.
\end{abstract}
\maketitle

\section{Introduction}

Let $\N_0 = \{0,1,2,\ldots\}$ denote the nonnegative integers. A \emph{numerical semigroup} $S$ is a subset $S \subseteq \N_0$ that contains $0$, is closed under addition, and has finite complement in $\N_0$.  The size of the complement, $|\N_0 \setminus S|$, is the \emph{genus} of $S$ and is denoted by $g(S)$.  The largest element of $\N_0 \setminus S$ is the \emph{Frobenius number} of $S$ and is denoted by $F(S)$.  Since $F(S) \not\in S$, for any positive integer $x$, at most one element of the pair $\{x,F(S)-x\}$ is contained in $S$.  This implies $F(S) \le 2g(S)-1$.  For a general reference on numerical semigroups, see \cite{sanchez2009numerical}.

Let $\SS_g$ denote the set of numerical semigroups of genus $g$.  A numerical semigroup $S \in \SS_g$ is uniquely determined by a set of $g$ gaps in the interval $[1,2g-1]$, or equivalently, by a set of $g$ elements of $S$ contained in $[1,2g]$.  Let $a_k(S)$ be the $k$\textsuperscript{th} smallest nonzero element of $S$.  The element $a_1(S)$ is the \emph{multiplicity} of $S$ and is denoted by $m(S)$.  Note that for any positive integer $g$ and for any $S \in \SS_g$, we have $a_g(S) = 2g$.

\begin{question}
As we vary over all semigroups $S \in \SS_g$, how are the points $(k, a_k(S))$ typically distributed?
\end{question}

We give an example with $g=55$ to demonstrate what we have in mind.  There are $1,142,140,736,859$ numerical semigroups in $\SS_{55}$.  Each gives a collection of $55$ integer points $(k,a_k(S))$ contained in the rectangle defined by $1 \le x \le 55$ and $0\le y \le 110$.  

Instead of working directly with the sets of points $(k,a_k(S))$ in a $g \times 2g$ rectangle, we study a scaled version where the points given by $\left(\frac{k-1}{g-1}, \frac{a_k(S)}{g} \right)$ are in the rectangle where $0 \le x \le 1,\ 0 \le y \le 2$.  This allows us to consider different values of $g$ on the same scale.

Figure \eqref{fig:genus55} gives a heat map where the color of the $\frac{1}{55}\times \frac{1}{55}$ square with lower-left corner at $(\frac{k-1}{54},\frac{j}{55})$ reflects the number of $S \in \SS_{55}$ with $a_k(S) = j$.  The main motivation for this paper is to understand these kinds of pictures as $g \rightarrow \infty$.

\begin{figure}[ht]\label{Fig1}
  \centering
  \begin{subfigure}{0.49\textwidth}
    \centering
    \includegraphics[width=\linewidth]{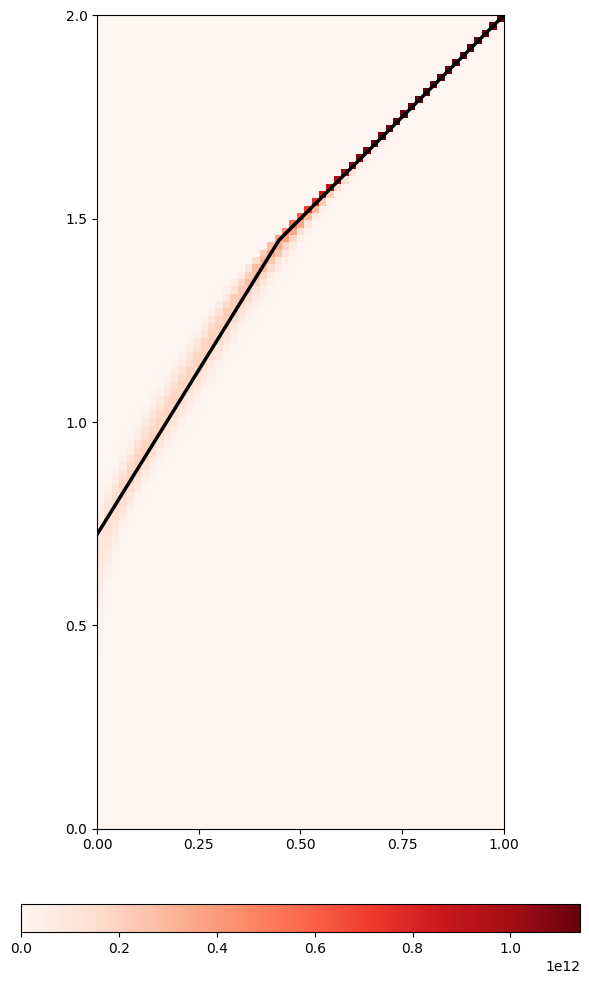}
    \caption{Genus 55}
    \label{fig:genus55}
  \end{subfigure}\hfill
  \begin{subfigure}{0.49\textwidth}
    \centering
    \includegraphics[width=\linewidth]{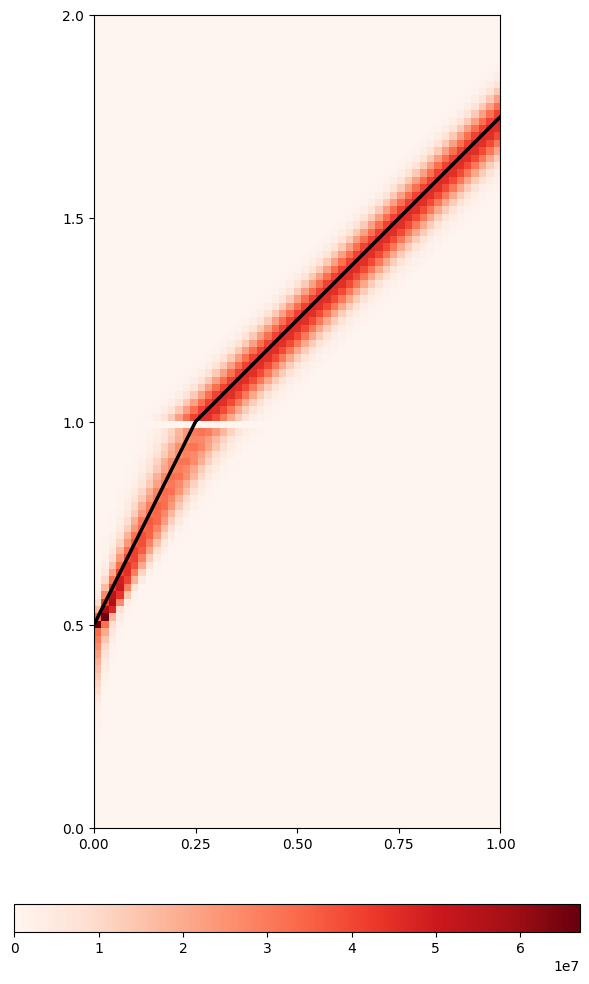}
    \caption{Frobenius number 55}
    \label{fig:frob55}
  \end{subfigure}
  \caption{Heat maps of $\psi_S$ and $\Xi_S$.}
  \label{fig:genus-frob}
\end{figure}

\subsection{Main Result}

Let $S$ be a numerical semigroup.  For a real number $0 \leq \alpha \leq 1$, let $t = 1 + \lfloor \alpha(g(S)-1) \rfloor$ and define
\[
\psi_S(\alpha) = \frac{a_t(S)}{g(S)}.
\]
We see that 
\[
\psi_S(0) = \frac{a_1(S)}{g(S)} = \frac{m(S)}{g(S)}\ \ \ \text{and }\ \ \ \psi_S(1) = \frac{a_{g(S)}}{g(S)} = 2.
\]
It is clear that $\psi_S(\alpha)$ is a nondecreasing function from $[0,1]$ to $[0,2]$.

Throughout this paper, we write $\varphi = \frac{1+\sqrt{5}}{2}$ for the golden ratio.  We often refer to the constant $\gamma = \frac{5+\sqrt{5}}{10} = \frac{\varphi}{\sqrt{5}}$.  Define the function 
\[
\psi(\alpha) = \begin{cases}
    \gamma +\phi \alpha &\text{if } 0\leq \alpha\leq \frac{1}{\sqrt{5}}\\
    1 + \alpha & \text{if } \frac{1}{\sqrt{5}} \leq \alpha \leq 1
\end{cases}.
\] 
Note that $\psi$ is a monotonically increasing continuous function from $[0,1]$ to $[0,2]$.

Throughout the paper, when we describe a numerical semigroup by listing its elements, we use the symbol $\rightarrow$ to indicate that it contains all larger elements.  Figure~\ref{fig:graphpsi} shows the graph of $\psi$ and the graph of $\psi_S$ for three different examples of semigroups of genus $33$: 
\begin{enumerate}[align=left, leftmargin=*]
    \item the ordinary semigroup $\{0,34\rightarrow\}$ (\ref{fig:psi_ordinary}) 
    ;
    \item the hyperelliptic semigroup $\{0,2,4,\ldots,66\rightarrow\}$ (\ref{fig:psi_hyperelliptic});
    \item the well-tempered semigroup $\{0,12,19,24,28,31,34,36,38,40,42,43,45\rightarrow\}$ (\ref{fig:psi_welltempered}).
\end{enumerate}

Notice that the ordinary semigroup and the hyperelliptic semigroup of each genus are extremal in the sense that the graph of any other semigroup of the same genus is contained between these two graphs.

\begin{figure}[ht]
  \centering
  \begin{subfigure}{0.3\textwidth}
    \centering
    \includegraphics[width=\linewidth]{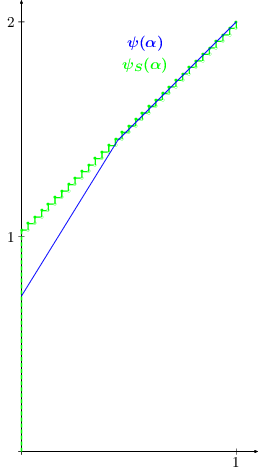}
    \caption{Ordinary semigroup}
    \label{fig:psi_ordinary}
  \end{subfigure}\hfill
  \begin{subfigure}{0.3\textwidth}
    \centering
    \includegraphics[width=\linewidth]{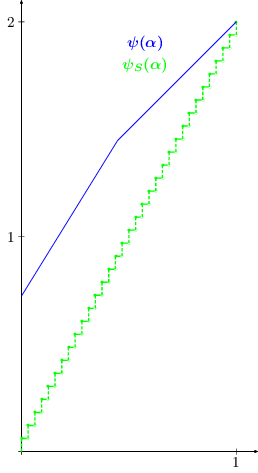}
    \caption{Hyperelliptic semigroup}
    \label{fig:psi_hyperelliptic}
  \end{subfigure}\hfill
  \begin{subfigure}{0.3\textwidth}
    \centering
    \includegraphics[width=\linewidth]{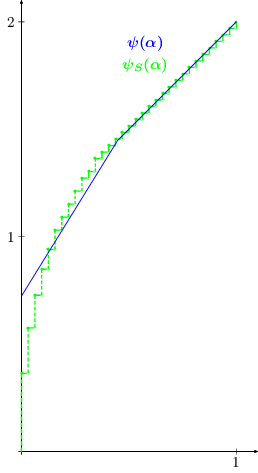}
    \caption{Well tempered semigroup}
    \label{fig:psi_welltempered}
  \end{subfigure}
  \caption{Graph of $\psi$ and $\psi_S$}
 \label{fig:graphpsi}
\end{figure}

We denote the uniform probability distribution on the finite set $\SS_g$ by $\P_g$.  Throughout the paper we write $n_g = |\SS_g|$.  Our main result is as follows.
\begin{theorem}\label{Thm: Uniform conv psi_S}
As $g\to\infty$, $\psi_S\to \psi$ uniformly in probability.
\end{theorem}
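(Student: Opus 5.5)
The plan is to reduce the uniform statement to finitely many pointwise statements, and then prove each pointwise statement by a direct counting (large-deviation) argument. The counting will use the known asymptotic structure of $\SS_g$ (Zhai): $n_g \sim C\varphi^g$, and almost every $S\in\SS_g$ satisfies $F(S)<3m(S)$.

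\emph{Reduction to a finite grid.} Both $\psi_S$ and $\psi$ are nondecreasing, and $\psi$ is continuous on the compact interval $[0,1]$, hence uniformly continuous. Given $\varepsilon>0$, choose a grid $0=\alpha_0<\alpha_1<\dots<\alpha_N=1$ on which $\psi$ increases by less than $\varepsilon$ between consecutive points. For $\alpha\in[\alpha_{i},\alpha_{i+1}]$, monotonicity gives
\[
\psi_S(\alpha_i)-\psi(\alpha_{i+1})\le \psi_S(\alpha)-\psi(\alpha)\le \psi_S(\alpha_{i+1})-\psi(\alpha_i).
\]
So $\sup_\alpha|\psi_S-\psi|\le 2\varepsilon$ whenever $|\psi_S(\alpha_i)-\psi(\alpha_i)|<\varepsilon$ for every $i$. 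A union bound over the finitely many $i$ therefore reduces the theorem to showing that, for each fixed $\alpha$, we have $\psi_S(\alpha)\to\psi(\alpha)$ in probability under $\P_g$.

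\emph{Heuristic and the structured count.} A semigroup with $m<F<2m$ is exactly a choice of $m$ together with an arbitrary subset of $(m,2m)$. Its genus is $g=(m-1)+\#\{\text{gaps in }(m,2m)\}$, so the number of such semigroups is about $\sum_m\binom{m}{g-m}$. Writing $m=xg$, the exponential rate is $x\,H\!\left(\frac{1-x}{x}\right)$, where $H$ is the binary entropy function. This rate is maximized at $x=\gamma$, where it equals $\log\varphi$, which matches $n_g\asymp\varphi^g$.

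Conditioned on $m$, the elements in $(m,2m)$ form a uniformly random subset of size $2m-g$. Their density is $(2\gamma-1)/\gamma=1/\varphi$ (using $2\gamma-1=1/\sqrt5$ and $\gamma\sqrt5=\varphi$). Consequently:
\begin{itemize}
\item for $k\le (2\gamma-1)g=g/\sqrt5$ we get $a_k\approx \gamma g+\varphi k$;
\item above $2m$ everything is in $S$, so $a_k=g+k$.
\end{itemize}
This is exactly $\psi$. To make it rigorous, I would carry out two steps. First, show that $m(S)/g\to\gamma$ in probability: the count of semigroups with $|m/g-\gamma|>\delta$ is at most $\varphi^{(1-c(\delta))g}$ by the strict concavity of the entropy exponent, which is $o(n_g)$. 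Second, for fixed $\alpha<1/\sqrt5$, bound the number of $S$ whose number of elements in $[m,m+\varphi\alpha g]$ differs from $\alpha g$ by more than $\delta g$. A hypergeometric Chernoff bound gives at most $e^{-c\delta^2 g}\binom{m}{g-m}$ for each $m$, and summing over $m$ shows this set is $o(n_g)$. For $\alpha>1/\sqrt5$ the claim follows from $a_k=g+k$ for all $k$ with $a_k>F$, once we know that $F(S)/g\to 2\gamma$, which is $2\times(\text{multiplicity ratio})$.

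\emph{Main obstacle.} The main difficulty is that semigroups with $2m<F<3m$ are not negligible: a positive proportion of them are not of the simple product form above. For these I would use the following parametrization. Choose any subset $A$ of $(m,2m)$. Then the elements of $(2m,3m)$ must contain $(A+A)\cap(2m,3m)$ together with $2m$, and are otherwise free. The freedom in $(2m,3m)$ only affects $O(1)$ expected gaps beyond $2m$, since the typical gap count above $2m$ is tight (this is the input from Zhai's work). So I would show that the number of such $S$ with a given $(m,\ A)$ is uniformly bounded in expectation after reweighting, and then apply the same Chernoff estimates to $A$. These give exponentially small exceptional sets, which survive multiplication by the bounded fibre counts. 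The remaining semigroups, those with $F>3m$, are $o(n_g)$ by Zhai's theorem and can be discarded.
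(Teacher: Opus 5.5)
Your grid reduction is the paper's argument. Your handling of $\alpha=0$, of $\alpha>1/\sqrt5$ (via $a_k=g+k$ once $k\ge F+1-g$), and of depth-$2$ semigroups is sound. A hypergeometric Chernoff bound on Zhao's uniform subset of $(m,2m)$ is a legitimate substitute for the paper's exact order-statistic moments plus Chebyshev. Your entropy argument for $m/g\to\gamma$ covers only depth $2$, though; for all of $\SS_g$ you should cite Kaplan--Ye, as the paper does.

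\textbf{The gap is the depth-$3$ step.} First, the fibre is misdescribed: since $m\in S$, the part of $S$ in $(2m,3m)$ must also contain $m+A$, not just $2m$ and $(A+A)\cap(2m,3m)$.

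More seriously, the fibre is not bounded even after that correction. For fixed $(m,A)$ with $|A|=2m-g-2+j$, the fibre is the set of $j$-subsets of the uncovered set $U_A=(2m,3m)\setminus\bigl(\{2m\}\cup(m+A)\cup(A+A)\bigr)$, so it has size $\binom{|U_A|}{j}$. If $A$ misses $(m,m+L]$, then $U_A\supseteq(2m,2m+L]$, so fibres can be exponentially large in $g$. Hence, on depth $3$, the law of $A$ is the uniform law tilted by a weight (roughly $\varphi^{|U_A|}$ after summing over $j$). This weight favours sets that are sparse just above $m$, which pushes $a_k$ upward, and that is one of the deviations you must rule out.

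An $e^{-c\delta^2 g}$ bound for uniform $A$ does not survive such a tilt if all you know is that the tilt has bounded mean. You would need, for example, H\"older's inequality plus a bound on $\E[\varphi^{q|U_A|}]$ for some $q>1$, uniformly in $m$ and $j$. That is a Zhai-type estimate which the proposal neither states nor proves. So ``bounded fibre counts'' is false as stated, and your last step does not go through.

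\textbf{The missing idea is to cut depth-$3$ semigroups at the Frobenius number rather than at $2m$.} Put $l=F(S)-2m(S)$ and record $(A_1,A_2)=(S\cap[m,m+l],\,S\cap[2m,2m+l])$. Every $x\in S\cap(m+l,2m)$ satisfies $x+y\ge x+m>F$ for every nonzero $y\in S$, so these elements are unconstrained. Conditioned on $(m,l,A_1,A_2)$, the set $S\cap(m+l,2m)$ is therefore a \emph{uniformly} random subset of $\{m+l+1,\dots,2m-1\}$ of size $2m-g+l-|A_1|-|A_2|$ (Proposition~\ref{Prop: parameterize C(g,m,l)}). Since $l<\epsilon g$ with probability tending to $1$ (Kaplan--Ye) and $|A_1|,|A_2|\le l$, the relevant order statistics agree with the depth-$2$ ones up to $O(\epsilon)$, with no reweighting at all. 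Your Chernoff bound applied in this parametrization, or the paper's order-statistics-plus-Chebyshev computation, then finishes the proof.
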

This means that for every $\epsilon>0$,
\[
\lim_{g\to\infty} \P_g\Big[ \sup_{0\leq \alpha\leq 1}|\psi_S(\alpha) -\psi(\alpha)| >\epsilon \Big] =0.
\]

Our second main result is an analogue of Theorem \ref{Thm: Uniform conv psi_S} for numerical semigroups ordered by Frobenius number.  Let $\S_{\Frob}(F)$ denote the set of numerical semigroups with Frobenius number $F$ and let $N_{\Frob}(F) = |\S_{\Frob}(F)|$.  Denote the uniform probability distribution on $\S_{\Frob}(F)$ by $\P_{\Frob,F}$.

Let $S$ be a numerical semigroup.  For a real number $0 \leq \alpha \leq 1$, let $t = 1 + \lfloor \alpha (F(S)-1) \rfloor$ and define
\[
\Xi_S(\alpha) = \frac{a_t(S)}{F(S)}.
\]
We see that 
\[
\Xi_S(0) = \frac{a_1(S)}{F(S)} = \frac{m(S)}{F(S)}\ \ \ \text{and }\ \ \ \Xi_S(1) = \frac{a_{F(S)}(S)}{F(S)} \leq 2.
\]
It is clear that $\Xi_S(\alpha)$ is a nondecreasing function from $[0,1]$ to $[0,2]$.

Define the function 
\[
\Xi(\alpha) = \begin{cases}
    \frac{1}{2} + 2\alpha &\text{if } 0\leq \alpha\leq \frac{1}{4}\\
    \frac{3}{4} + \alpha & \text{if } \frac{1}{4} \leq \alpha \leq 1
\end{cases}.
\]
Note that $\Xi$ is a continuous and monotonically increasing function from $[0,1]$ to $[0,2]$.

Figure~\ref{fig:graphXi} shows the graph of $\Xi$ and the graph of $\Xi_S$ for three different examples of semigroups with Frobenius number $45$: 
\begin{enumerate}[align=left, leftmargin=*]
\item the ordinary semigroup $\{0, 46\rightarrow\}$ (\ref{fig:Xi_ordinary});
\item the hyperelliptic semigroup $\{0,2,\ldots,44,46\rightarrow\}$ (\ref{fig:Xi_hyperelliptic}); 
\item the almost-well-tempered semigroup $\{0,12,19,24,28,31,34,36,38,40,42,43,44,46\rightarrow~\}$ ~(\ref{fig:Xi_almostwelltempered}).
\end{enumerate}

\begin{figure}[H]
  \centering
  \begin{subfigure}{0.3\textwidth}
    \centering
    \includegraphics[width=\linewidth]{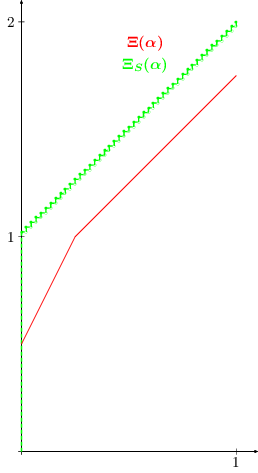}
    \caption{Ordinary semigroup\\ \ }
    \label{fig:Xi_ordinary}
  \end{subfigure}\hfill
  \begin{subfigure}{0.3\textwidth}
    \centering
    \includegraphics[width=\linewidth]{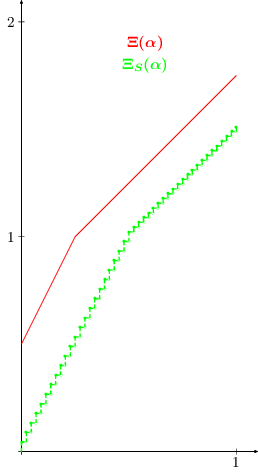}
    \caption{Hyperelliptic semigroup\\ \ }
    \label{fig:Xi_hyperelliptic}
  \end{subfigure}\hfill
  \begin{subfigure}{0.3\textwidth}
    \centering
    \includegraphics[width=\linewidth]{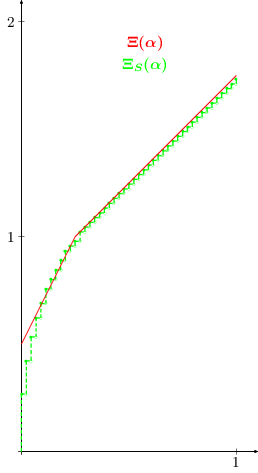}
    \caption{Almost well-tempered semigroup}
    \label{fig:Xi_almostwelltempered}
  \end{subfigure}
  \caption{Graph of $\Xi$ and $\Xi_S$}
 \label{fig:graphXi}
\end{figure}

As in the case of $\psi_S$, the ordinary semigroup and the hyperelliptic semigroup of each Frobenius number are extremal in the sense that the graph of $\Xi_S$ of any other semigroup of the same Frobenius number is contained between these two graphs.

Figure \eqref{fig:frob55} gives a heat map where the color of the $\frac{1}{55}\times \frac{1}{55}$ square with lower-left corner at $(\frac{k-1}{54},\frac{j}{55})$ reflects the number of $S \in \SS_{\Frob}(55)$ with $a_k(S) = j$.
We will show the following.
\begin{theorem}\label{Thm: Uniform Convergence by frob}
As $F\to\infty$, $\Xi_S\to \Xi$ uniformly in probability.
\end{theorem}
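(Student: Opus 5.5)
The plan is to reduce the statement to the known structure of a typical semigroup with Frobenius number $F$. Results of Backelin type, and their probabilistic refinements, say that for all but a vanishing proportion of $S \in \S_{\Frob}(F)$ the multiplicity satisfies $m(S) = F/2 + o(F)$. More is true: almost every $S$ contains every element of $[F/2, F]$ except on a set of size $o(F)$, and for each $x \in (F/2,F)$ exactly one of the pair $\{x, F-x\}$ lies in $S$. The first step is to prove precisely this concentration, namely that with probability tending to $1$,
\[
m(S) \ge \left(\tfrac12-\epsilon\right)F .
\]

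The second step gives the shape on $[F/2, F]$. Above the multiplicity, choose the gaps in $(F/2,F)$ freely; any such choice yields a semigroup, because sums of two elements exceed $F$. So $S\cap(F/2,F)$ is essentially a uniformly random subset of that interval. By the law of large numbers (Chernoff plus a union bound over $O(1/\epsilon)$ grid points), for every $y\in[1/2,1]$,
\[
\left|S\cap[F/2,yF]\right| = \tfrac12(y-\tfrac12)F + o(F)
\]
uniformly, with high probability. This is the step I would make rigorous first. I would use a counting comparison: the number of semigroups with $m \ge F/2$ is $\asymp 2^{F/2}$, and a given $m$ gives about $2^{F-m}$ choices. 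Semigroups with $m<(1/2-\epsilon)F$ are few, by the counting bound $N_{\Frob}(F) = \Theta(2^{F/2})$, which I would assume or rederive as in Backelin. Conditioned on $m$ near $F/2$, the remaining elements in $(m,F)$ are close to uniform.

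The third step converts counts to $\Xi_S$. The first $F/4 + o(F)$ nonzero elements of $S$ lie in $[F/2,F]$ with density $1/2$, so $a_t \approx F/2 + 2t$. This gives $\Xi_S(\alpha) \approx 1/2 + 2\alpha$ for $\alpha \le 1/4$. After $F$, every integer is in $S$, so for $t \ge F/4$ we get $a_t \approx F + (t - F/4)$, that is $\Xi_S(\alpha) \approx 3/4+\alpha$. For uniformity, both $\Xi_S$ and $\Xi$ are monotone, and $\Xi$ is continuous. Pointwise convergence in probability on a finite grid of $\alpha$'s with mesh $\epsilon$ therefore upgrades to uniform convergence by the standard Pólya/Dini sandwich argument.

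The main obstacle is step one: showing that the proportion of semigroups with $m(S) \le (1/2-\epsilon)F$, and more generally those whose structure in $(F/2,F)$ deviates from uniform, tends to $0$. This requires an upper bound of the form $N_{\Frob}(F)\cdot o(1)$ on such semigroups. For the proof I would bound semigroups with multiplicity $m$ by choosing a small generating set below $F/2$. I would show that elements below $F/2$ force, via sums, many constraints in $(F/2,F)$, which costs a factor $2^{-cF}$ beyond subexponential choices. This follows the Backelin/Zhai style argument.
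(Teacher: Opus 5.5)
Your outline is sound, and its core mechanism differs from the paper's. The paper reduces to pointwise convergence by the same grid-and-monotonicity argument you describe. It handles $\alpha=0$ with Backelin's Theorem~\ref{thm: backelin m=F/2}, and $\alpha>\frac14$ with Singhal's genus concentration (Theorem~\ref{thm: singhal genus by frob}) plus the identity $a_t=t+g$ for $t\ge F+1-g$. For $0<\alpha<\frac14$ it conditions on the genus $g$ (and, in depth $3$, on $m$ and $(A_1,A_2)$), so the free part of $S$ is a uniformly random subset of \emph{fixed size}; it then uses exact order-statistic means and variances and Chebyshev. You do not condition on $g$, so the free positions become independent fair coins (for depth $2$ this is Proposition~\ref{Prop: depth 2 by frob} with $g$ left free). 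A Chernoff bound then controls the counting function $x\mapsto|S\cap[1,x]|$ uniformly on a grid, and you invert it. The inversion is legitimate because the limiting profile of $|S\cap[1,yF]|/F$ has slope at least $\frac12$ for $y\in[\frac12,2]$. Your route gives the genus concentration $g(S)\approx\frac34F$ for free, so Singhal's theorem is not needed, and it treats all $\alpha$ at once. The paper's conditioning on $g$ is inherited from the genus-ordered case, where $g$ is fixed and no independent-coin structure is available.

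Three points need repair.

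First, the two ``more is true'' claims in your opening paragraph are false and contradict your Step~2. A typical $S$ omits about $F/4$ elements of $[\frac F2,F]$, not $o(F)$. And ``exactly one of $x,F-x$ lies in $S$'' is symmetry, which forces $g=\frac{F+1}{2}$ rather than the typical $\frac34F$. Delete them.

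Second, ``any such choice yields a semigroup, because sums of two elements exceed $F$'' holds only when $m>\frac F2$. Semigroups with $\frac F3<m<\frac F2$ are a positive proportion of $\S_{\Frob}(F)$: for odd $F$, those with $m=\frac{F-1}{2}$ alone number $2^{(F-5)/2}$, a quarter of the depth-$2$ count. Your ``close to uniform'' therefore needs the actual reason. If $2m<F$, every $x\in[F-m+1,2m-1]$ satisfies $x<2m$ and $x+m>F$. So given $m$ and $S\cap([m,F-m]\cup[2m,F])$, these positions are independent fair coins (Proposition~\ref{Prop: parameterize C(F,m,g) by frob} with $g$ left free). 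Only $2(F-2m+1)=O(\epsilon F)$ positions are constrained.

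Third, your Step~1 argument is not a proof. The total $N_{\Frob}(F)=\Theta(2^{F/2})$ does not by itself bound the subfamily with small $m$. The trade-off ``subexponential choices versus a $2^{-cF}$ saving'' is also inaccurate: $S\cap[m,\frac F2)$ alone has $2^{\Theta(\epsilon F)}$ possibilities, and depth-$3$ semigroups as a whole already number $2^{F/2+o(F)}$ \cite{li2023counting}. A genuine count inside depth $3$ would be required. Instead, just cite Backelin's Theorem~\ref{thm: backelin m=F/2}, as the paper does; it gives $|m(S)-\frac F2|\le M$ with high probability, far more than you need.
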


These results fit into a literature concerning limit shapes of random combinatorial objects.  To give one notable example, there has been an extensive study of shapes of Young diagrams of large random integer partitions under different distributions.  For more information on these kinds of results, see, for example, the notes of Okounkov \cite{Okounkov}.  

\subsection{Outline of the paper}

Our main result also contributes to the study of statistical properties of the finite collection of numerical semigroups of genus $g$. This has been a topic of active research interest in recent years.  In Section \ref{sec:review_Sg} we review the results in this area that we will need and describe a framework for counting numerical semigroups in $\SS_g$ with certain properties. We also do the same for counting numerical semigroups in $\SS_{\Frob}(F)$.

In Section \ref{sec:uniform_pointwise} we discuss some basic tools from probability theory and explain how Theorem~\ref{Thm: Uniform conv psi_S} follows from a statement about the pointwise convergence in probability as $g \rightarrow \infty$ of the functions $\psi_S(\alpha)$ to the function $\psi(\alpha)$.  In Section \ref{Sec: pointwise} we prove the pointwise convergence result that we need.
In Section~\ref{Sec: proof for frob}, we follow a similar strategy for numerical semigroups ordered by Frobenius number and prove Theorem~\ref{Thm: Uniform Convergence by frob}.

\section{Computational results}

Before going into the proofs of Theorem \ref{Thm: Uniform conv psi_S} and Theorem \ref{Thm: Uniform Convergence by frob}, we discuss some computations supporting these results.  Figure \eqref{fig:genus55} makes a compelling visual case that the points $\left(\frac{k-1}{g-1},\frac{a_k(S)}{g}\right)$ typically are `close' to the function $\psi(\alpha)$.  We can make more precise statements using the language of linear regression; see, for example, \cite{montgomery2021introduction}.

For each semigroup $S$ of genus $g$ we consider the set of points $(\frac{k-1}{g-1},\frac{a_k(S)}{g})$
for $k=1,\ldots,g$. Observe that these points are on the graph of $\psi_S(\alpha)$.
Let $i_{cut}(g)=\lceil\frac{g-1}{\sqrt{5}}\rceil$.
We want to compute separately
\begin{itemize}
\item  the regression line of the points $(\frac{k-1}{g-1},\frac{a_k(S)}{g})$ with $1\leq k\leq i_{cut}(g)$,
\item  the regression line of the points $(\frac{k-1}{g-1},\frac{a_k(S)}{g})$ with $i_{cut}(g)+1\leq k\leq g$.
\end{itemize}
We will call the first line the \emph{left line}, and the second line the \emph{right line}.
We say that we \emph{gain a point on the left} if $i_{cut}(g)=i_{cut}(g-1)+1$ and we say that we \emph{gain a point on the right} if $i_{cut}(g)=i_{cut}(g-1)$.
  
For the computation of the left line, we define
\begin{align*}
\bar x^l(S)& =\frac{1}{2}\left(\frac{i_{cut}(g)-1}{g-1}\right), \\
E^l(S)& =\frac{1}{i_{cut}(g)}\sum_{i=1}^{i_{cut}(g)}\frac{a_i(S)}{g}.
\end{align*}
Then the slope and the $y$-intercept of the left line are, respectively, given by
\begin{eqnarray*}
m^l(S)&=&\frac{\sum_{i=1}^{i_{cut}(g)}\left(E^l(S)-\frac{a_i(S)}{g}\right)\left(\bar x^l(S)-\frac{i-1}{g-1}\right)}{\sum_{i=1}^{i_{cut}(g)}\left(\bar x^l(S)-\frac{i-1}{g-1}\right)^2},\\  
  b^l(S)&=&E^l(S)-m^l(S)\bar x^l(S).
  \end{eqnarray*}
The accuracy of this approximation is measured by its \emph{coefficient of determination}
  $$(R^2)^l(S)=1-\frac{\sum_{i=1}^{i_{cut}(g)}\left(m^l(S)\frac{i-1}{g-1}+b^l(S)-\frac{a_i(S)}{g}\right)^2}{\sum_{i=1}^{i_{cut}(g)}\left(E^l(S)-\frac{a_i(S)}{g}\right)^2}.$$
  
We proceed similarly for the right line.  We define 
\begin{align*}
\bar x^r(S)& =\frac{1}{2}\left(\frac{g-1+i_{cut}(g)}{g-1}\right),\\
E^r(S) &=\frac{1}{g-i_{cut}(g)}\sum_{i=i_{cut}(g)+1}^{g}\frac{a_i(S)}{g},
\end{align*}
and
\begin{eqnarray*}
  m^r(S)&=&\frac{\sum_{i=i_{cut}(g)+1}^{g}\left(E^r(S)-\frac{a_i(S)}{g}\right)\left(\bar x^r(S)-\frac{i-1}{g-1}\right)}{\sum_{i=i_{cut}(g)+1}^{g}\left(\bar x^r(S)-\frac{i-1}{g-1}\right)^2},\\
  b^r(S)&=&E^r(S)-m^r(S)\bar x^r(S),\\
  (R^2)^r(S)&=&1-\frac{\sum_{i=i_{cut}(g)+1}^{g}\left(m^r(S)\frac{i-1}{g-1}+b^r(S)-\frac{a_i(S)}{g}\right)^2}{\sum_{i=i_{cut}(g)+1}^{g}\left(E^r(S)-\frac{a_i(S)}{g}\right)^2}.
\end{eqnarray*}

In order to consider these values taken over the entire set of all the semigroups of genus $g$, we compute the mean values
\[
\begin{array}{rclrcl}
 \bar m^l(g)&=&\frac{1}{n_g}\sum_{S\in \SS_g} m^l(S),&
\bar m^r(g)&=&\frac{1}{n_g}\sum_{S\in \SS_g}m^r(S),\\
 \bar b^l(g)&=&\frac{1}{n_g}\sum_{S\in \SS_g} b^l(S),&
\bar b^r(g)&=&\frac{1}{n_g}\sum_{S\in \SS_g}b^r(S),\\
(\bar R^2)^l(g)&=&\frac{1}{n_g}\sum_{S\in \SS_g} (R^2)^l(S),&
(\bar R^2)^r(g)&=&\frac{1}{n_g}\sum_{S\in \SS_g}(R^2)^r(S).\\
\end{array}
\]
We include data for these values in Appendix \ref{sec:appendix}.  In Table~\ref{tab:regressionparameters}, for each genus from $4$ to $52$ we give the number of semigroups of that genus, the number of points used for the computation of the left line and the number of points used for the computation of the right line of each semigroup $S$, and the mean values $\bar m^l(g)$, $\bar b^l(g)$, $(\bar R^2)^l(g)$, $\bar m^r(g)$, $\bar b^r(g)$, and $(\bar R^2)^r(g)$. In Table~\ref{tab:regressionincrements}, we give the increments of these values for each genus with respect to the previous one.

\begin{figure}[htbp]
  \centering
  \includegraphics[width=0.75\textwidth]{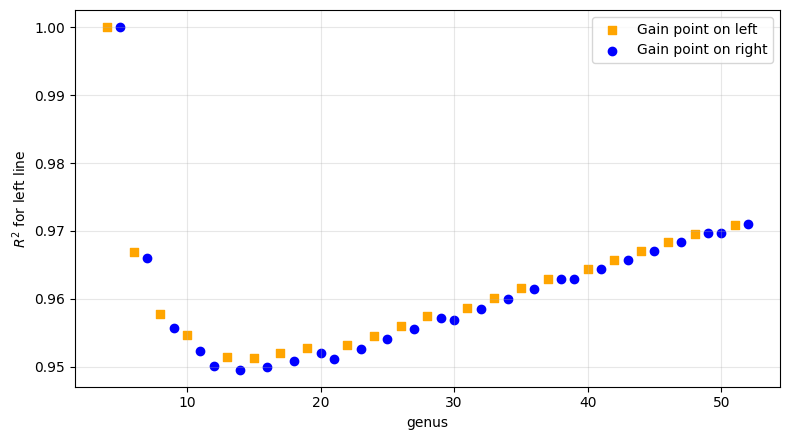}
  \caption{$\bar R^2$ values for the left line.}
  \label{fig:R2-left genus}
\end{figure}

Notice that for the left line the $\bar R^2$ value is generally increasing beyond $g=14$. However, we see that $(\bar R^2)^l(g)>(\bar R^2)^l(g-1)$ when $i_{cut}(g)=i_{cut}(g-1)+1$, while $(\bar R^2)^l(g)<(\bar R^2)^l(g-1)$ when $i_{cut}(g)=i_{cut}(g-1)$.
\begin{figure}[htbp]
  \centering
  \includegraphics[width=0.75\textwidth]{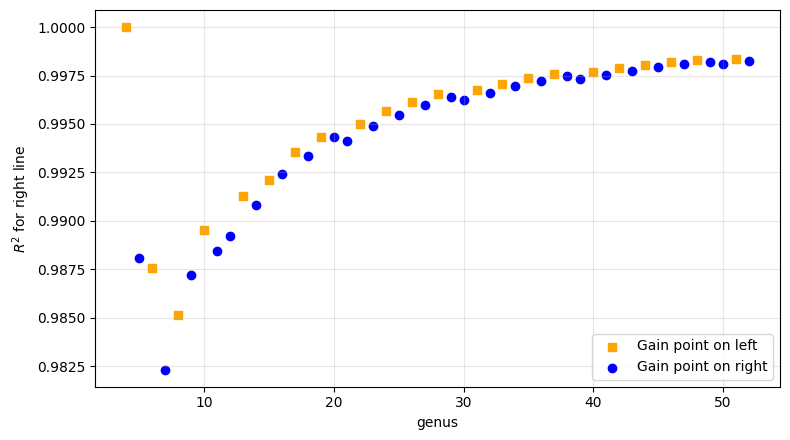}
  \caption{$\bar R^2$ values for the right line.}
  \label{fig:R2-right genus}
\end{figure}
For the right line, we see a general increase in $\bar R^2$ values beyond $g=7$. We also notice that $(\bar R^2)^r(g)<(\bar R^2)^r(g-1)$ when $i_{cut}(g)=i_{cut}(g-1)+1$, while $(\bar R^2)^r(g)>(\bar R^2)^r(g-1)$ when $i_{cut}(g)=i_{cut}(g-1)$.

\begin{figure}[htbp]
  \centering
  \includegraphics[width=0.65\textwidth]{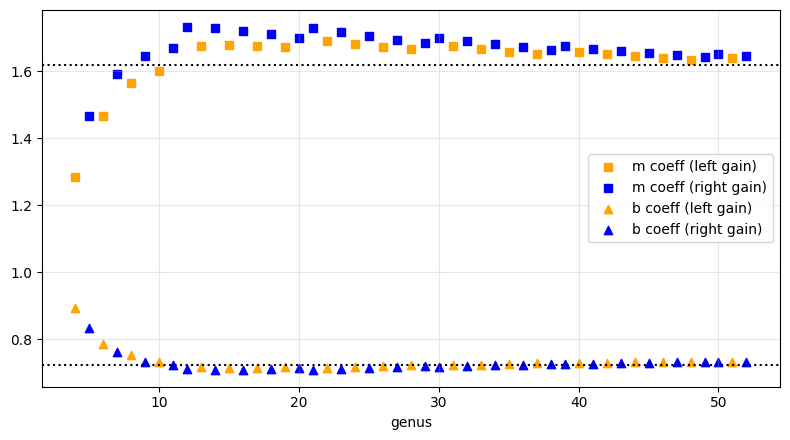}
  \caption{$\bar m$ and $\bar b$ coefficients for the left line.}
  \label{fig:left-coeff}
\end{figure}

\begin{figure}[htbp]
  \centering
  \includegraphics[width=0.65\textwidth]{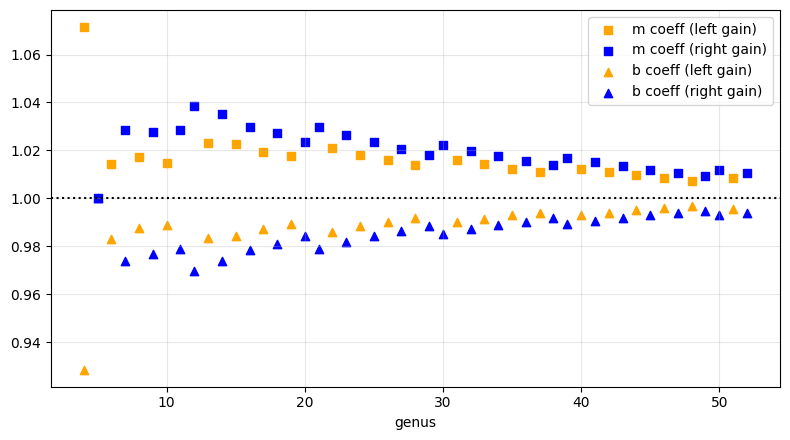}
  \caption{$\bar m$ and $\bar b$ coefficients for the right line.}
  \label{fig:right-coeff}
\end{figure}

We now consider semigroups ordered by Frobenius number.  For each semigroup $S$ with Frobenius number $F$ we consider the set of points $\left(\frac{k-1}{F-1},\frac{a_k(S)}{F}\right)$
for $k=1,\ldots,F$. Observe that these points are on the graph of $\Xi_S(\alpha)$.
Define $i_{cut}(F)=\lfloor\frac{F-1}{4}\rfloor+1$.
Just as we did for semigroups ordered by genus, for each semigroup with Frobenius number $F$ we compute
\begin{itemize}
\item  the regression line of the points $\left(\frac{k-1}{F-1},\frac{a_k(S)}{F}\right)$ with $1\leq k\leq i_{cut}(F)$,
\item  the regression line of the points $\left(\frac{k-1}{F-1},\frac{a_k(S)}{F}\right)$ with $i_{cut}(F)+1\leq k\leq F$.
\end{itemize}
We define
\begin{align*}
\bar x^l(S) &=\frac{1}{2}\left(\frac{i_{cut}(F)-1}{F-1}\right),\\
\bar x^r(S) &= \frac{1}{2} \left(\frac{F-1+i_{cut}(F)}{F-1}\right),\\
E^l(S) &=\frac{1}{i_{cut}(F)}\sum_{i=1}^{i_{cut}(F)}\frac{a_i(S)}{F}, \\
E^r(S) & =\frac{1}{F-i_{cut}(F)}\sum_{i=i_{cut}(F)+1}^{F}\frac{a_i(S)}{F}.
\end{align*}
The slopes and the $y$-intercepts of the left and right lines are given by
\begin{eqnarray*}
  m^l(S)&=&\frac{\sum_{i=1}^{i_{cut}(F)}\left(E^l(S)-\frac{a_i(S)}{F}\right)\left(\bar x^l(S)-\frac{i-1}{F-1}\right)}{\sum_{i=1}^{i_{cut}(F)}\left(\bar x^l(S)-\frac{i-1}{F-1}\right)^2},\\  
    b^l(S)&=&E^l(S)-m^l(S)\bar x^l(S),\\
  m^r(S)&=&\frac{\sum_{i=i_{cut}(F)+1}^{F}\left(E^r(S)-\frac{a_i(S)}{F}\right)\left(\bar x^r(S)-\frac{i-1}{F-1}\right)}{\sum_{i=i_{cut}(F)+1}^{F}\left(\bar x^r(S)-\frac{i-1}{F-1}\right)^2},\\
  b^r(S)&=&E^r(S)-m^r(S)\bar x^r(S).
  \end{eqnarray*}
We compute the coefficients of determination for each of the two lines,
\begin{eqnarray*}
(R^2)^l(S)&=&1-\frac{\sum_{i=1}^{i_{cut}(F)}\left(m^l(S)\frac{i-1}{F-1}+b^l(S)-\frac{a_i(S)}{F}\right)^2}{\sum_{i=1}^{i_{cut}(F)}\left(E^l(S)-\frac{a_i(S)}{F}\right)^2},\\
  (R^2)^r(S)&=&1-\frac{\sum_{i=i_{cut}(F)+1}^{F}\left(m^r(S)\frac{i-1}{F-1}+b^r(S)-\frac{a_i(S)}{F}\right)^2}{\sum_{i=i_{cut}(F)+1}^{F}\left(E^r(S)-\frac{a_i(S)}{F}\right)^2}.
\end{eqnarray*}
We also compute the mean values
$$\begin{array}{rclrcl}
  \bar m^l(F)&=&\frac{1}{|\SS_{Frob}(F)|}\sum_{S\in \SS_{Frob}(F)} m^l(S),&
\bar m^r(F)&=&\frac{1}{|\SS_{Frob}(F)|}\sum_{S\in \SS_{Frob}(F)}m^r(S),\\
 \bar b^l(F)&=&\frac{1}{|\SS_{Frob}(F)|}\sum_{S\in \SS_{Frob}(F)} b^l(S),&
\bar b^r(F)&=&\frac{1}{|\SS_{Frob}(F)|}\sum_{S\in \SS_{Frob}(F)}b^r(S),\\
 (\bar R^2)^l(F)&=&\frac{1}{|\SS_{Frob}(F)|}\sum_{S\in \SS_{Frob}(F)} (R^2)^l(S),&
(\bar R^2)^r(F)&=&\frac{1}{|\SS_{Frob}(F)|}\sum_{S\in \SS_{Frob}(F)}(R^2)^r(S).\\
\end{array}$$
In Table~\ref{tab:regressionparameters_xi} in Appendix \ref{sec:appendix}, for each Frobenius number from $6$ to $52$, we give the number of semigroups with that Frobenius number, the number of points used for the computation of the left line and the number of points used for the computation of the right line of each semigroup $S$, and the mean values $\bar m^l(F)$, $\bar b^l(F)$, $(\bar R^2)^l(F)$, $\bar m^r(F)$, $\bar b^r(F)$, and $(\bar R^2)^r(F)$. In Table~\ref{tab:regressionincrements_xi} we give the increments of these values for each Frobenius number with respect to the previous one.

\begin{figure}[htbp]
  \centering
  \includegraphics[width=0.75\textwidth]{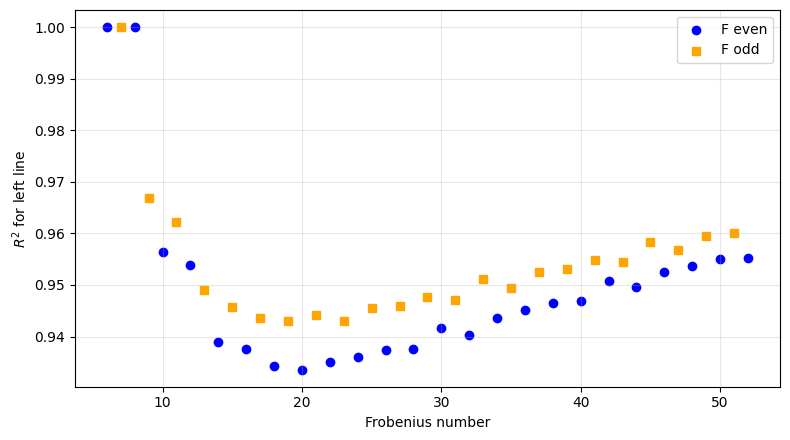}
  \caption{$\bar R^2$ values for the left line, by Frobenius number.}
  \label{fig:R2-left Frob}
\end{figure}

We note that for the left line, the $\bar R^2$ value is generally increasing beyond $F=20$. We see that $\bar R^2$ increases whenever $F$ is even and that $\bar R^2$ decreases whenever $F$ is odd.

\begin{figure}[htbp]
  \centering
  \includegraphics[width=0.75\textwidth]{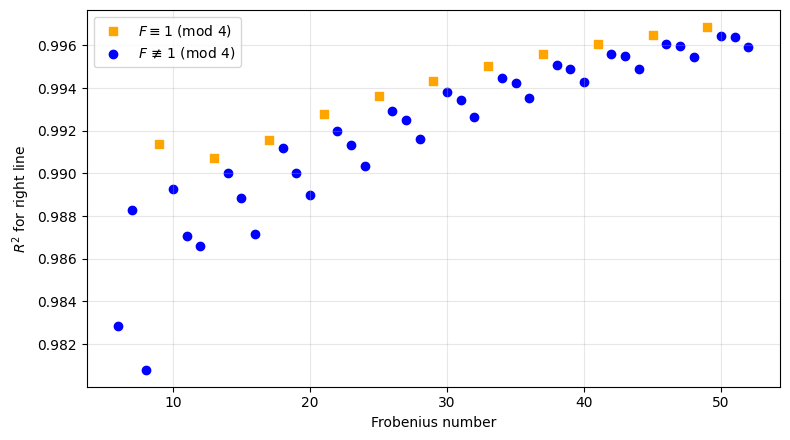}
  \caption{$\bar R^2$ values for the right line, by Frobenius number.}
  \label{fig:R2-right Frob}
\end{figure}

For the right line, the $\bar R^2$ value is generally increasing beyond $F=8$.
We see that the value of $\bar R^2$ increases when $F\equiv 1 \pmod{4}$ and decreases otherwise.

Producing all of this data is time-consuming, as is exploring the tree of all numerical semigroups up to a given genus. There has been extensive work on efficiently exploring the semigroup tree, see for example \cite{FH,ba2024,ba_unleaved}. Here we used the seeds algorithm given in \cite{bf2018,ba2024} and its more recent refinement for the unleaved tree \cite{ba_unleaved}.
In order to generate these graphs, we not only count these semigroups, but we must collect data from each semigroup to measure its contribution to the heat maps and to the regression lines. The overall complexity is the reason why it is difficult to extend the previous figures for larger values of $g$ or for larger values of $F$.

\section{Background}\label{sec:review_Sg}

\subsection{Properties of a typical numerical semigroup of genus $g$}

There is a significant body of literature studying the finite collection of numerical semigroups of genus $g$.  Kaplan gives an introduction to this topic in the survey \cite{KaplanCounting}.  It is natural to ask about the size of this finite set.  Recall that $n_g = |\SS_g|$.  In \cite{BA_Fibonacci}, Bras-Amor\'os computed the first $50$ terms of this sequence and made a conjecture about the asymptotic growth of $n_g$.  This conjecture was proven by Zhai.
\begin{theorem}\cite[Theorem 1]{zhai2013fibonacci}\label{thm_zhai}
There exists a constant $S> 3.78$ such that
\[
\lim_{g\to\infty}\frac{n_g}{\phi^g} = S.
\]
\end{theorem}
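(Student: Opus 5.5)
The plan follows Zhai's strategy: sort semigroups of genus $g$ by multiplicity $m=m(S)$ and by the \emph{depth} $q=\lceil (F(S)+1)/m\rceil$. We show that semigroups of depth at most $3$ contribute exactly a constant times $\phi^g$ asymptotically, and that all deeper semigroups contribute $o(\phi^g)$.

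\textbf{Step 1 (depth $\le 2$).} If $F(S)<2m$, then $S=\{0,m\}\cup A\cup[2m,\infty)$ for an arbitrary subset $A\subseteq\{m+1,\dots,2m-1\}$. Closure is automatic, since any sum of two nonzero elements is at least $2m$. The genus is $g=(m-1)+(m-1-|A|)$. The number of such semigroups is therefore $\sum_m\binom{m-1}{2m-2-g}$, which is a Fibonacci number, hence asymptotic to $c\,\phi^g$ for an explicit constant $c$. This already shows that $\phi$ is the correct growth rate.

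\textbf{Step 2 (depth $3$).} Now suppose $2m<F<3m$. Then $S$ is determined by two sets:
\begin{itemize}
\item $A=S\cap(m,2m)$;
\item $B=S\cap(2m,3m)$, which must contain $(A+A)\cup(m+A)$ restricted to that interval.
\end{itemize}
The gaps in $(2m,3m)$ that are not forced are free, and their number is small only when $A$ is sparse near the bottom or top of the interval. Write $k$ for the number of gaps above $2m$, and $j$ for the corresponding boundary positions where $A$ must avoid forcing them. The count becomes a sum over these finite local patterns, each weighted by a Fibonacci-type count of the remaining free choices in $A$. Each pattern with $k$ gaps above $2m$ contributes asymptotically $c_{k,\text{pattern}}\,\phi^{g}$, with weights decaying roughly like $\phi^{-k}$ times a polynomial. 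I will use a uniform domination bound of this form and apply dominated convergence (Tannery's theorem) to interchange limit and sum. This yields $\lim n_g^{(\le 3)}/\phi^g=S$ for an explicit convergent series $S$.

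\textbf{Step 3 (depth $\ge 4$), the main obstacle.} We must show that semigroups with $F\ge 3m$ number $o(\phi^g)$. My first approach is Zhao's idea.
\begin{itemize}
\item Fix $m$ and count by Apéry sets, i.e.\ the minimal elements $w_i=k_im+i$ in each residue class mod $m$. The genus is $\sum k_i$.
\item Closure forces the Kunz inequalities $k_i+k_j\ge k_{i+j}$.
\item If some $k_i\ge 3$, these inequalities force many coordinates to be at least $2$. This costs a factor of roughly $\phi^{-\delta m}$ relative to the depth-$\le 3$ count at the same $m$.
\item Separately, when $m$ is far from its typical value $\approx g/\phi^{\,}\!\cdot$const, the binomial sums in Step 1 are exponentially smaller than $\phi^g$.
\end{itemize}
Summing over $m$ then gives $O(\phi^g\cdot \text{poly}(g)\,\theta^{g})$ with $\theta<1$ for large depth. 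The delicate part is depth exactly $4$ with $F$ just above $3m$, where this gain is weakest; there I would refine Step 2's pattern analysis one level further.

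\textbf{Step 4 (the numerical bound).} The bound $S>3.78$ follows by truncating the positive series from Step 2 and evaluating finitely many terms.
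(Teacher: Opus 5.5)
The paper gives no proof of this statement; it quotes it from Zhai. Your overall plan (a Fibonacci count in depth $\le 2$, a series over local patterns in depth $3$, showing depth $\ge 4$ is negligible, and truncating a positive series to get $S>3.78$) has the shape of Zhai's argument, and Steps 1 and 4 are fine.

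\textbf{Step 3.} The step that carries the real content is not proved, and as written it is false. With your convention $q=\lceil (F+1)/m\rceil$ one has $q=\max_i k_i$, so ``some $k_i\ge 3$'' means depth $\ge 3$, not depth $\ge 4$. Depth-$3$ semigroups are a positive proportion of $\SS_g$: those with $F=2m+1$ alone number the Fibonacci number $\mathrm{Fib}_{g-2}\sim\phi^{g-2}/\sqrt5$. So no factor $\phi^{-\delta m}$ can come from $k_i\ge 3$. Concretely, $k_r=3$ only forces $k_j+k_{r-j}\ge 3$ for $j<r$. The wrap-around inequalities $k_j+k_{m+r-j}+1\ge k_r$, which you omitted, are then vacuous, and for $r=1$ nothing is forced at all.

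With the correct hypothesis $k_r\ge 4$, the Kunz inequalities (now including the wrap-around ones) do force about half of the $k_j$ to be $\ge 2$. But turning that into $o(\phi^g)$ is exactly Zhao's conjecture, which the paper describes as a major step of Zhai's proof. You must also bound the choices of the values $k_j\in\{2,3,4,\dots\}$ on the higher levels, and do so uniformly in $m$. Comparing with ``the depth-$\le 3$ count at the same $m$'' gives nothing for $m<g/3+1$, where no semigroup of depth $\le 3$ exists (since $g=\sum_j k_j\le 3(m-1)$). In that range the trivial bound $\binom{g-1}{m-2}$ on the number of Kunz vectors grows like $1.89^g\gg\phi^g$ near $m\approx g/3$. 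Your remark about atypical $m$ concerns only the depth-$2$ binomials, and the case you call delicate is deferred rather than treated.

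\textbf{Step 2.} This step also hides a nontrivial estimate. By the paper's parametrization of $\cC(g,m,l)$, summing over $m$, a pattern $(A_1,A_2)$ with $F=2m+l$ contributes exactly $\mathrm{Fib}_{g-3l-1+|A_1|+|A_2|}$. So domination by a constant times the limit is automatic, and what you need is precisely
$\sum_{l}\sum_{(A_1,A_2)}\phi^{|A_1|+|A_2|-3l}<\infty$.

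Write $A=A_1-m$ and sum over $A_2$, which must contain $(A_1+A_1)\cap[2m,2m+l-1]$ and is free elsewhere in that interval; use $1+\phi=\phi^2$. The level-$l$ term then equals $\sum_A\phi^{-l-|((A+A)\cap[0,l-1])\setminus A|}$, summed over $A\subseteq[0,l-1]$ with $0\in A$ and $l\notin A+A$. There are $3^{\lfloor (l-1)/2\rfloor}$ such $A$, and $\sqrt3>\phi$, so the trivial estimate diverges. You need an actual argument that $A+A$ is large except for few structured $A$. A per-pattern bound of the form ``$\phi^{-k}$ times a polynomial'' cannot substitute for this, since infinitely many patterns share the same number $k$ of gaps above $2m$.
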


A major step in Zhai's proof of this result is to prove a conjecture of Zhao \cite[Conjecture~4.1]{zhao2010constructing}, which states informally that most numerical semigroups of genus $g$ satisfy $F(S) < 3m(S)$.  Kaplan and Ye use Theorem \ref{thm_zhai} to sharpen this result. 
\begin{theorem}\cite[Theorem 4]{kaplan2013proportion}\label{thm_KY_F2m}
For fixed $\epsilon > 0$, we have
\[
\lim_{g\to\infty}\P_{g}[|F(S)-2m(S)|< \epsilon g]=1.
\]
\end{theorem}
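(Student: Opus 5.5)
The plan is to combine the lower bound $n_g \ge c\,\phi^g$ from Theorem~\ref{thm_zhai} with an upper bound showing that the number of $S\in\SS_g$ with $|F(S)-2m(S)|\ge\epsilon g$ is $o(\phi^g)$. I split these semigroups into three classes: (i) $F(S)\le 2m(S)-\epsilon g$; (ii) $2m(S)+\epsilon g\le F(S)<3m(S)$; (iii) $F(S)>3m(S)$. For class (iii) I invoke Zhao's conjecture, proved by Zhai as part of the proof of Theorem~\ref{thm_zhai}, which says that the proportion of $S\in\SS_g$ with $F(S)>3m(S)$ tends to $0$. It then remains to bound classes (i) and (ii) by $O(\phi^{(1-\delta)g})$ for some $\delta=\delta(\epsilon)>0$. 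Since there are only $O(g^2)$ choices of the pair $(m,F)$, it suffices to bound the count for each pair separately.

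\emph{Class (i).} When $F<2m$, every subset $A\subseteq(m,F)$ gives a numerical semigroup $\{0,m\}\cup A\cup\{F+1\rightarrow\}$, since any sum of two nonzero elements is at least $2m>F$. The genus is $g=(m-1)+1+\#\big((m,F)\setminus A\big)$. So, writing $F=m+k$, the number of such semigroups is $\binom{k-1}{g-m}$. Put $m=ag$ and $k=bg$, with $(1-a)\le b\le a$. The exponential growth rate is $b\,H\!\left(\frac{1-a}{b}\right)$, where $H$ is the natural-log entropy function. For fixed $a$ this is increasing in $b$ while $(1-a)/b\le 1/2$, so it is maximized at $b=a$. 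With $p=(1-a)/a$, the maximum is $\max_p H(p)/(1+p)=\log\phi$. Standard calculus shows this is attained uniquely at $p=\phi^{-2}$, and strict concavity gives a deficit $\delta(\epsilon)>0$ whenever $b\le a-\epsilon$. Hence class (i) contributes $O(g^2\phi^{(1-\delta)g})$.

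\emph{Class (ii).} Here $2m<F<3m$. Such an $S$ is determined by $A=S\cap(m,2m)$ together with $B=S\cap(2m,F)$. The constraint I use is that if $x\in(2m,F)$ is a gap, then $x-m\in(m,F-m)$ is also a gap. Hence the gaps in $(2m,F)$ inject into the gaps of $(m,F-m)$, and $B$ must contain $2m$ and all $x$ whose $x-m$ lies in $A$. Writing $m=ag$, $F=(2a+c)g$ with $c\ge\epsilon$, I bound the count by summing over the number of gaps in each range. This gives a product of binomial coefficients whose exponent is a concave function of the densities. I then show that its maximum over the region with $c\ge\epsilon$ is strictly below $\log\phi$: at $c=0$ this reduces to the computation in class (i), and positive $c$ forces a definite loss.

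The main obstacle is this entropy optimization for class (ii). The first bound I try is the crude one: choose $A$ freely, and choose the gaps in $(2m,F)$ only among the positions $x$ with $x-m\notin A$. If the resulting maximum is not strictly below $\log\phi$ for $c\ge\epsilon$, I will add the constraint $A+A\subseteq S$ in the window $(2m,F)$, which should cut the count further. Combining the three classes with $n_g\ge c\,\phi^g$ then gives $\P_g[|F(S)-2m(S)|\ge\epsilon g]\to0$.
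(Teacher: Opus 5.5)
The paper does not prove this statement; it quotes it from Kaplan and Ye, who derive it from Zhai's work. Your classes (i) and (iii) are fine. For $F<2m$ the count $\binom{F-m-1}{g-m}$ is correct. The map $b\mapsto bH(j/b)$ is increasing for all $b>j$, since its derivative is $-\log(1-j/b)$, so your restriction $(1-a)/b\le 1/2$ is unnecessary. Continuity on a compact domain then gives a uniform deficit below $\log\phi$ once $b\le a-\epsilon$.

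\textbf{The gap is class (ii).} This class carries the whole difficulty, and the crude bound you propose provably fails there. Write $l=F-2m=cg$ and index the window by offsets $0<i<l$.
\begin{itemize}
\item In the crude count, the pair $(m+i,2m+i)$ takes three states: both in $S$, only $m+i$ a gap, or both gaps. This contributes $1+y+y^2$ to the genus generating function, while each offset $l<i<m$ contributes only $1+y$.
\item Summing over $m$, the generating function is, up to a bounded analytic factor, $\bigl(y(1+y+y^2)\bigr)^{l}/(1-y-y^2)$. For $0<c<0.59$ its coefficient of $y^g$ is governed by the pole at $y=\phi^{-1}$.
\item Because $1+\phi^{-1}+\phi^{-2}=2$ exceeds $1+\phi^{-1}=\phi$, the exponential rate is $\phi^{1-c}2^{c}>\phi$.
\item Equivalently, in your variables the exponent is $\max_a\min_{x>0}\bigl[(a-c)\log(1+x)+c\log(1+x+x^2)-(1-a)\log x\bigr]=(1-c)\log\phi+c\log 2$, with the optimum at $x=\phi^{-1}$, where the coefficient of $a$ vanishes.
\end{itemize}
So positive $c$ gives a gain over the depth-$2$ rate, not the ``definite loss'' you assert. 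No bookkeeping of densities can rescue this bound.

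\textbf{What is missing} is a usable form of $A+A\subseteq S$: no gap in $(2m,F]$, and in particular not $F$ itself, is a sum of two elements of $S\cap(m,F-m)$. Even the simplest piece of this, that at most one of $x$ and $F-x$ lies in $S$, is not enough. It lowers the weight of a pair of offsets $\{i,l-i\}$ only to $(1+y+y^2)^2-1=3$ at $y=\phi^{-1}$, that is $\sqrt3>\phi$ per offset, which is still too large. You have to use the full sumset structure. Once you do, the count is no longer a product of binomial coefficients with a concave exponent.

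The exponential saving you want for $F\ge 2m+\epsilon g$ is essentially Zhu's theorem quoted in the paper, which is a substantial result on its own. If you cite it, the proof closes immediately:
\begin{itemize}
\item Since $m\le g+1$, for $g\ge 2$ the condition $F\ge 2m+\epsilon g$ forces $F>(2+\epsilon')m$ with $\epsilon'=\min(\epsilon/2,1/2)$.
\item So classes (ii) and (iii) together have at most $\theta^g$ elements, for some $\theta<\phi$ and all large $g$.
\item Your class (i) bound together with $n_g\ge c\,\phi^g$ then finishes the argument.
\end{itemize}
Otherwise you need a genuine analysis of depth-$3$ semigroups along the lines of Zhai's work, on which Kaplan and Ye build.
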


Singhal has strengthened this result in \cite{singhal2022distribution}.
\begin{theorem}\cite[Theorem 8]{singhal2022distribution}\label{F 2m}
Given $\epsilon>0$, there is an $M(\epsilon) >0$ such that for all $g > 0$ we have
\[
\P_g[|F(S)-2m(S)|>M(\epsilon)]<\epsilon.
\]
\end{theorem}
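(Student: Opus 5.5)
The plan is to compare the number of semigroups with a prescribed value of $F(S)-2m(S)$ against $n_g$. By Theorem~\ref{thm_zhai} we have $n_g\ge c\,\phi^g$ for all $g$, with some $c>0$. So it suffices to find constants $C>0$ and $0<\rho<1$, independent of $g$, such that for every integer $k$
\[
\#\{S\in\SS_g : F(S)-2m(S)=k\}\le C\rho^{|k|}\phi^g .
\]
Summing the tail $|k|>M$ then gives $\P_g[|F(S)-2m(S)|>M]\le (C/c)\sum_{|k|>M}\rho^{|k|}$, which is less than $\epsilon$ once $M=M(\epsilon)$ is large. The finitely many small $g$ are absorbed by enlarging $M$. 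Throughout, I encode counts by generating functions in a variable $x$ marking gaps, and evaluate at or near $x=1/\phi$, where $x(1+x)=1$.

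\emph{Case $F<2m$, with $F=2m-k$ and $k\ge 1$.} Such an $S$ is exactly $\{0,m\}\cup A\cup\{F+1\rightarrow\}$ with $A\subseteq(m,F)$ arbitrary, because all sums of nonzero elements exceed $F$. The genus is $(m-1)+(F-m-1-|A|)+1$. Summing over $m$ with $j=m-k-1$ free positions gives the generating function
\[
x^{k}\sum_{j\ge 0}\bigl(x(1+x)\bigr)^j=\frac{x^k}{1-x-x^2}.
\]
Its coefficient of $x^g$ is $O(\phi^{g-k})$ uniformly, which is the desired bound with $\rho=1/\phi$.

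\emph{Case $F>2m$, with $F=2m+k$ and $k\ge 1$.} First I discard $F>3m$: here I would invoke the Zhao--Zhai estimate that $\#\{S\in\SS_g:F(S)>3m(S)\}=O(\phi^g\theta^{g})$ for some $\theta<1$, and in that range $k>m\gg$ is large anyway. For $2m<F<3m$, write $A=S\cap(m,2m)$. The positions $y\in(m,2m)$ with $y>m+k$ are unconstrained and give a factor $(1+x)$ each, which again telescopes against $x^{m}$ to produce $1/(1-x-x^2)$. The $k$ positions $y\in(m,m+k)$ are coupled to positions in $(2m,F)$ in two ways:
\begin{itemize}
\item by $y\in A\Rightarrow y+m\in S$;
\item by the Frobenius symmetry: $y$ and $F-y$, which also lies in $(m,m+k)$, are never both in $S$.
\end{itemize}
Using only the first constraint, each pair $(y,y+m)$ has weight $1+x+x^2$. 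Compared with the baseline, this yields a factor $(x(1+x+x^2))^k=(2/\phi)^k$ at $x=1/\phi$, which grows, so the crude bound fails.

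The key step, which I expect to be the main obstacle, is to use the symmetry constraint together with the first. I would group positions into blocks $\{y,\,F-y,\,y+m,\,F-y+m\}$ with $y\in(m,m+k/2)$. In each block, at most one of $y,F-y$ lies in $S$, and whichever does forces its $m$-translate into $S$. Enumerating the admissible configurations of a block, I would compute its weight $w(x)$ and check that the contribution per block relative to the baseline, namely $x^2w(x)$ at $x=1/\phi$, is strictly less than $1$. That gives $\rho<1$.

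If a single block is not enough, I would additionally exploit $A+A\subseteq S$. Whenever $y,y'\in A$ with $y+y'<F$ is in range, the sum $y+y'$ must lie in $S$ and must avoid $F$, which removes further configurations. I would also perturb the evaluation point slightly below $1/\phi$ to absorb polynomial factors in the saddle-point estimate. Putting the two cases together yields the uniform geometric tail, and hence the statement.
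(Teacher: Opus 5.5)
\textbf{There is a genuine gap in the depth-$3$ case.}

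Your overall strategy is sound. A bound $\#\{S\in\SS_g : F(S)-2m(S)=k\}\le c_k\phi^g$ with $c_k$ independent of $g$ and $\sum_k c_k<\infty$, combined with $n_g\asymp\phi^g$ from Theorem~\ref{thm_zhai}, gives the statement. Your depth-$2$ computation is also correct (the numerator is really $x^{k+1}$, which is harmless).

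The gap is the range $2m<F<3m$, which is the real content of the theorem, and the check you propose there fails. For your block $\{y,F-y,y+m,F-y+m\}$ the admissible configurations have weight $w(x)=x^2(1+x)^2+2x(1+x)$. So $w(1/\phi)=3$ and $x^2w(x)=3/\phi^2\approx 1.15>1$. The symmetry constraint only improves $(2/\phi)^k$ to $(\sqrt3/\phi)^k$, which still grows.

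Everything then rests on the fallback ``exploit $A+A\subseteq S$'', which you do not carry out. That constraint is not local, so it does not fit into a product of block weights. Concretely, write $U=\{s-m : s\in S,\ m\le s<m+k\}$ and sum out the free positions of $S\cap[2m,2m+k]$. The normalized weight is then
\[
c_k=\phi^{-k}W_k(\phi^{-1})=\phi^{-k-1}\sum_{U}\phi^{\,|U|-1-|(U+U)\cap[0,k-1]|},
\]
summed over $U\subseteq[0,k-1]$ with $0\in U$ and $k\notin U+U$. The sets with $(U+U)\cap[0,k-1]=U$ are exactly the traces of numerical semigroups with Frobenius number $k$, so they alone contribute $\asymp 2^{k/2}$ to the sum. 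Showing the whole sum is $O(\rho^k\phi^k)$ requires a genuine additive-combinatorial count of ``near-semigroups'', which your proposal does not supply.

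\textbf{The missing idea: geometric decay is unnecessary.} Summability of $c_k$ follows from positivity and Theorem~\ref{thm_zhai}.
\begin{enumerate}
\item For each $k\ge1$, the depth-$3$ semigroups with $F-2m=k$ have genus generating function $x^kW_k(x)/(1-x-x^2)$. Here $W_k$ is a polynomial with nonnegative coefficients not depending on $m$ (this follows from Proposition~\ref{Prop: parameterize C(g,m,l)}).
\item The coefficient of $x^n$ in $(1-x-x^2)^{-1}$ is a Fibonacci number that is at most $\phi^n$ and asymptotic to $\phi^{n+1}/\sqrt5$. Hence the coefficient of $x^g$ above is at most $c_k\phi^g$ for every $g$, and for fixed $k$ it is asymptotic to $\frac{\phi}{\sqrt5}c_k\phi^g$.
\item These classes are disjoint for different $k$. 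So $\frac{\phi}{\sqrt5}\sum_{k\le K}c_k\le\lim_g n_g\phi^{-g}$ for every $K$, hence $\sum_k c_k<\infty$.
\item Uniformly in $g$, this gives
\[
\P_g\bigl[2m(S)<F(S)<3m(S),\ F(S)-2m(S)>M\bigr]\le\Bigl(\sup_{g'}\frac{\phi^{g'}}{n_{g'}}\Bigr)\sum_{k>M}c_k .
\]
The supremum is finite by Theorem~\ref{thm_zhai}.
\end{enumerate}

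For $F>3m$ you only need $\P_g[F(S)>3m(S)]\to0$, which is Zhai's proof of Zhao's conjecture. Handle the finitely many small $g$ by enlarging $M$ beyond $2g_0$. The exponential rate you attribute to Zhao--Zhai is really Zhu's theorem, and it is not needed.
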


Zhu has shown that numerical semigroups with $F(S)$ significantly larger than $2m(S)$ occur with exponentially vanishing probability.  
\begin{theorem}\cite[Theorem 1.7]{zhu2023sub}
Fix some $0 < \epsilon < 1$. Then,
\[
\limsup_{g\to\infty} 
\big|\{S\in \S_g: F(S)>(2+\epsilon)m(S)\}\big|^\frac{1}{g} <\phi.
\]
\end{theorem}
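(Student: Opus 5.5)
The statement above is quoted from \cite{zhu2023sub} and we take it as given; if one wanted to reprove it, here is the plan I would follow. It is a direct counting argument with an entropy-type exponent estimate, in the spirit of Zhai's proof of Theorem~\ref{thm_zhai}.

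Fix $m$ and $F$ with $F>(2+\epsilon)m$, and encode a semigroup $S$ with $m(S)=m$, $F(S)=F$ and $g(S)=g$ by two pieces of data:
\begin{itemize}
\item the set $A=S\cap(m,2m)$;
\item the set $B$ of elements of $S$ in $[2m,F)$.
\end{itemize}
Every element of $S$ above $F$ is forced. The baseline heuristic is as follows. Semigroups with $F<2m$ correspond, up to negligible error, to arbitrary subsets $A\subseteq(m,2m)$, and the genus is then $g=(m-1)+(m-1-|A|)$. So the count for fixed $g$ is about $\sum_m\binom{m-1}{g-m+1}$, which is where the growth rate $\phi^g$ comes from. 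The goal is to show that the extra room above $2m$ costs a constant factor in the exponent once $F-2m\ge \epsilon m$.

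The key constraint comes from $F\notin S$. For every $x\in S$ with $0<x<F$ we have $F-x\notin S$. Applied to the interval $[m,F-m]$, this gives two things.
\begin{itemize}
\item Among the pairs $\{x,F-x\}$ with $m\le x\le F-m$, at most one element of each pair lies in $S$. There are about $(F-2m)/2\ge \epsilon m/2$ such pairs.
\item Each element of $S$ in that window forces a partner gap.
\end{itemize}
Next I count the gaps. These are:
\begin{itemize}
\item the $m-1$ gaps in $[1,m)$;
\item the gaps in $(m,2m)$;
\item the gaps in $[2m,F]$.
\end{itemize}
In the window $[m,F-m]$, the pair constraint forces at least half of the positions to be gaps. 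So the genus $g$ is at least roughly $(m-1)+(m-1-|A|)+c\,(F-2m)$ for an explicit $c>0$. At the same time, the number of choices for $B$ is at most the number of independent-set-like configurations on those pairs, namely at most $3^{(F-2m)/2}$ up to polynomial factors. I would also use closure, in the form $a+a'\in S$ for $a,a'\in A$, to cut this down further if the crude bound turns out to be insufficient.

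Combining these, the number of such $S$ of genus $g$ is at most a polynomial in $g$ times
\[
\max_{m,\,t\ge \epsilon m}\ \binom{m-1}{j}\,3^{t/2},
\]
where $t=F-2m$ and $j$ is the number of gaps in $(m,2m)$. The maximum is taken subject to the genus constraint $g\ge (m-1)+j+ct$. Writing $m=\mu g$, $j=\nu g$ and $t=\tau g$ and using Stirling, the exponential growth rate is bounded by
\[
\sup\exp\bigl(\mu H(\nu/\mu)+\tfrac{\tau}{2}\log 3\bigr)
\]
over the resulting compact polytope with $\tau\ge\epsilon\mu$. The unconstrained optimum is exactly $\phi$, attained at $\tau=0$ and $\mu=1/\sqrt5$-type values, consistent with Theorem~\ref{thm_zhai}. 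The remaining task is to show that the supremum over the constrained region $\tau\ge\epsilon\mu$ is strictly smaller than $\phi$.

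The main obstacle is this last optimization. The crude bound of $3^{t/2}$ choices per unit of $t$, set against the genus cost $c\,t$, may not beat the trade-off rate $\phi$. If it does not, I would first try a sharper per-residue-class analysis using Kunz coordinates modulo $m$. Any residue class whose first element exceeds $2m$ must have its smaller elements in the class be gaps, and the pairing with $F$ then forces at least about two gaps per unit of length in the window. That should push the local exponent per unit of $t$ strictly below $\phi^{\text{(genus cost)}}$. Once such a strict local inequality is in place, continuity and compactness yield a uniform gap $\phi-\delta(\epsilon)$, and summing over the polynomially many choices of $(m,F)$ gives the $\limsup$ bound.
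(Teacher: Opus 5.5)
The paper does not prove this statement. It is quoted from \cite{zhu2023sub} as background, so there is no in-paper argument to compare with, and your outline has to stand on its own. As written it has two genuine gaps.

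\textbf{The bookkeeping fails in the regime that matters, $2m<F<3m$.} There the window $[m,F-m]$ lies inside $[m,2m)$. So the gaps forced by the pairing $\{x,F-x\}$ are among the $m-1-|A|$ gaps you already counted in $(m,2m)$, and they say nothing about $[2m,F]$. In fact $[2m,F]$ can contain only the gap $F$. If $l=F-2m<m$ is odd, then $\{0\}\cup[m,m+\frac{l-1}{2}]\cup M\cup[2m,F-1]\cup[F+1,\infty)$ is a numerical semigroup for every $M\subseteq(m+l,2m)$. Hence no $c>0$ makes $g\ge(m-1)+(m-1-|A|)+c(F-2m)$ true.

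Likewise, $3^{(F-2m)/2}$ bounds the configurations of $S\cap[m,F-m]$, which is part of $A$, not of $B$. When $S\cap[m,F-m]=\{m\}$, $B$ can be any subset of $[2m,F-1]$ containing $2m$, which gives $2^{F-2m-1}$ choices.

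Even on its own terms the final optimization cannot succeed. For the constrained supremum to drop below $\phi$ you need $\sqrt3<\phi^{c}$, i.e.\ $c>\log 3/(2\log\phi)\approx 1.14$. But $[2m,F]$ contains at most $t$ gaps, so $c\le 1$.

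\textbf{The missing idea: closure is not an optional refinement but the whole content of the theorem.} Use Kunz coordinates: $w_i=k_im+i$ is the least element of $S$ congruent to $i$, and $g=\sum_{i=1}^{m-1}k_i$. Semigroups with multiplicity $m$ and $F=2m+l$, $0<l<m$, correspond exactly to the following data.
\begin{enumerate}
\item A free choice $k_i\in\{1,2\}$ for each $l<i<m$.
\item Sets $T=\{i\le l:k_i=1\}\subseteq[1,l-1]$ and $U=\{i:k_i=3\}$ with $l\in U\subseteq[1,l]$, $U\cap T=\emptyset$ and $U\cap(T+T)=\emptyset$.
\item All remaining $k_i$ equal to $2$.
\end{enumerate}
The condition $U\cap(T\cup(T+T))=\emptyset$ is precisely the coupling $(A_1+A_1)\cap[2m,F]\subseteq A_2$ between the two components of $\chi_{g,m,l}(S)$ in Proposition~\ref{Prop: parameterize C(g,m,l)}.

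Now weight each semigroup by $\phi^{-g}$. The depth-$2$ semigroups of multiplicity $m$ have total weight $(\phi^{-1}+\phi^{-2})^{m-1}=1$. The depth-$3$ family above has total weight $\phi^{-l-2}\sum_T\phi^{|T|-\tau(T)}$, where $\tau(T)=|(T\cup(T+T))\cap[1,l-1]|$ and $T$ runs over subsets of $[1,l-1]$ with $l\notin T+T$.

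Using only the pairing ($l\notin T+T$) and the trivial closure $m+(m+i)\in S$ ($U\cap T=\emptyset$) amounts to replacing $\tau(T)$ by $|T|$. That gives about $\phi^{-l}3^{l/2}$, which \emph{grows} because $\sqrt3>\phi$. So what must be proved is an estimate like $\sum_T\phi^{|T|-\tau(T)}\le C(\phi-\delta)^l$. This is a genuine additive-combinatorial count of sets whose pairwise sums create few new elements below $l$, and it is not routine:
\begin{enumerate}
\item The roughly $2^{l/2}$ sets $T\subseteq(l/2,l)$ all have $\tau(T)=|T|$, so no inequality $\tau(T)\ge(1+c)|T|$ holds.
\item The bound $|T+T|\ge 2|T|-1$ is useless once the sums exceed $l$.
\end{enumerate}

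Your Kunz fallback does not supply this estimate. In Kunz terms the pairing only says that $k_i$ and $k_{l-i}$ are not both $1$, which is exactly what produced $3^{l/2}$. Also, ``two gaps per unit of length'' cannot hold, since there is at most one gap per integer.

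Given such an estimate, you could finish depth $3$ by taking $x$ slightly above $\phi^{-1}$ in $\sum_S x^{g(S)}$ and summing over $\epsilon m<l<m$. The range $F>3m$ would still need a separate argument.
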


We have a detailed understanding of the typical size of $F(S)$ for numerical semigroups in $\SS_g$. It is natural to ask for analogous results for other invariants.  
Kaplan and Ye study the typical size of $m(S)$ for a numerical semigroup in $\SS_g$.
\begin{theorem}\cite[Proposition 16]{kaplan2013proportion}\label{Kaplan Mult}
For fixed $\epsilon > 0$, we have
\[
\lim_{g\to\infty}\P_{g}[|m(S)-\gamma g |< \epsilon g]=1.
\]
\end{theorem}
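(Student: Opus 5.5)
The plan is to compare the number of semigroups in $\SS_g$ with a given multiplicity against the asymptotic $n_g \sim S\phi^g$ from Theorem~\ref{thm_zhai}. Semigroups with $F(S) \le 2m(S)+M$ have an explicit binomial structure, and the rest are negligible by Theorem~\ref{F 2m}.

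\textbf{Step 1: reduction.} Fix $\epsilon>0$ and $\delta>0$. Choose $M = M(\delta)$ from Theorem~\ref{F 2m}, so that $\P_g[F(S) > 2m(S)+M] < \delta$ for all $g$. It then suffices to show
\[
\frac{1}{n_g}\,\#\{S\in\SS_g : F(S)\le 2m(S)+M,\ |m(S)-\gamma g|\ge \epsilon g\} \longrightarrow 0 .
\]

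\textbf{Step 2: a structural count.} Let $S$ have multiplicity $m$ and $F(S) \le 2m+M$. Then $S$ is determined by $m$ and two sets:
\begin{itemize}
\item $A = S \cap (m,2m)$, which is arbitrary;
\item $B = S \cap [2m, 2m+M]$, for which there are at most $2^{M+1}$ possibilities.
\end{itemize}
The gaps of $S$ are the $m-1$ integers in $[1,m-1]$, the $m-1-|A|$ elements of $(m,2m)$ not in $A$, and at most $M+1$ further gaps in $[2m,2m+M]$. Hence
\[
g-m-M-1 \le m-1-|A| \le g-m .
\]
The number of such $S$ with multiplicity $m$ is therefore at most
\[
2^{M+1}\sum_{|j-(g-m)|\le M+1}\binom{m-1}{j} \le C_M \max_{|j-(g-m)|\le M+1}\binom{m}{j},
\]
where $C_M$ depends only on $M$. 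For the semigroups with $F(S)<2m(S)$ no condition is needed on $A$: sums of nonzero elements are at least $2m > F$, so closure is automatic. This shows that $\binom{m}{g-m}$ is the right order of magnitude, and the sum $\sum_m \binom{m}{g-m}$ is a Fibonacci number, of order $\phi^g$.

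\textbf{Step 3: the entropy estimate (main obstacle).} Write $m = xg$ with $1/2 \le x \le 1$. Then
\[
\frac1g \log\binom{m}{g-m} \to h(x) := x\,H\!\left(\tfrac{1-x}{x}\right),
\]
where $H$ is the natural-log entropy function. The key claim is that $h$ is strictly concave and attains its maximum value $\log\phi$ uniquely at $x=\gamma$. To locate the maximum, consider the ratio of consecutive terms:
\[
\binom{m+1}{g-m-1}\Big/\binom{m}{g-m} = \frac{(m+1)(g-m)}{(2m-g+1)(2m-g+2)} \approx \frac{x(1-x)}{(2x-1)^2}.
\]
Setting this equal to $1$ gives $5x^2-5x+1=0$, whose root in $[1/2,1]$ is $x=(5+\sqrt5)/10=\gamma$. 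The value $h(\gamma)=\log\phi$ is consistent with $\sum_m\binom{m}{g-m}=F_{g+1}\asymp\phi^g$. Alternatively, I would verify it directly from the explicit formula for $h$.

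By continuity and compactness of $\{x : |x-\gamma| \ge \epsilon\}$, there is $c=c(\epsilon)<\phi$ such that $\binom{m}{j} \le c^g$ for large $g$, for all $m$ with $|m-\gamma g| \ge \epsilon g$ and all $|j-(g-m)| \le M+1$. The bounded shift of $j$ changes the exponent by only $o(g)$.

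\textbf{Step 4: conclusion.} Summing over at most $g$ values of $m$ gives
\[
\#\{S\in\SS_g : F(S)\le 2m(S)+M,\ |m(S)-\gamma g|\ge\epsilon g\} \le C_M\, g\, c^g .
\]
By Theorem~\ref{thm_zhai}, $n_g \sim S\phi^g$, so the proportion of these semigroups is $O(g(c/\phi)^g) \to 0$. Adding back the $\delta$ from Step 1 and letting $\delta \to 0$ proves the claim.
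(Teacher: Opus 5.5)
The paper gives no proof of this statement (it is quoted from Kaplan--Ye), so your argument has to stand on its own. It does: the proof is correct, and it uses only Theorems~\ref{thm_zhai} and~\ref{F 2m}, both stated earlier in the paper. Two points need repair.

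In Step~2 the gap count is off by one. If $r\in[0,M]$ is the number of gaps in $[2m,2m+M]$, then $m-1-|A|=g-m+1-r$. So the upper bound is $g-m+1$, and it is attained exactly by the depth-$2$ semigroups of Proposition~\ref{Zhao F<2m}. This is harmless, because your window $|j-(g-m)|\le M+1$ contains $g-m+1$.

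In Step~3 you need a uniform upper bound, not the pointwise limit of $\frac1g\log\binom{m}{g-m}$. Use $\binom{n}{k}\le e^{nH(k/n)}$, which holds for all $0\le k\le n$. Combine it with $\frac{g-M}{2}\le m\le g+1$, which follows from $g\le F(S)\le 2m+M$. Then $m/g$ lies within $O(1/g)$ of $[\frac12,1]$, and uniform continuity of $(x,y)\mapsto xH(y)$ absorbs both this and the bounded shift in $j$.

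Your claims about $h$ are easy to confirm directly:
$h(x)=x\log x-(1-x)\log(1-x)-(2x-1)\log(2x-1)$, so $h'(x)=\log\frac{x(1-x)}{(2x-1)^2}$ and $h''(x)=\frac1x-\frac1{1-x}-\frac{4}{2x-1}<0$ on $(\frac12,1)$. Using $\gamma=\phi/\sqrt5$, $1-\gamma=1/(\phi\sqrt5)$ and $2\gamma-1=1/\sqrt5$ gives $h(\gamma)=\log\phi$ exactly.

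Your inputs are also stronger than the argument needs. Theorem~\ref{thm_zhai} can be replaced by the elementary lower bound $n_g\ge|\B(g)|=\sum_m\binom{m-1}{g-m+1}$, which is a Fibonacci number of order $\phi^g$.

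More interestingly, Singhal's $O(1)$ control of $F-2m$ is not needed. Suppose you only know $\P_g[F(S)>2m(S)+\epsilon' g]\to 0$ for each $\epsilon'>0$, as in Theorem~\ref{thm_KY_F2m}. Your count then picks up a factor $2^{\epsilon' g+1}$ and a $j$-window of width $O(\epsilon' g)$, which shift the exponent by only $O(\epsilon'\log(1/\epsilon'))$. Choosing $\epsilon'$ small relative to the entropy gap $\log\phi-\max_{|x-\gamma|\ge\epsilon}h(x)$ finishes the proof.

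If the reduction step uses Zhu's theorem instead, the same computation gives $\#\{S\in\SS_g : |m(S)-\gamma g|\ge\epsilon g\}=O(c^g)$ for some $c<\phi$. That is, the exceptional proportion is exponentially small, a rate your $\delta$-reduction through Theorem~\ref{F 2m} does not provide.
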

Theorem \ref{Kaplan Mult} and Theorem \ref{thm_KY_F2m} together imply that for fixed $\epsilon>0$,
\begin{equation}\label{eqn:FG}
    \lim_{g\to\infty}\P_{g}[|F(S)-2\gamma g |< \epsilon g]=1.
\end{equation}

In \cite{kaplan2023expected}, Kaplan and Singhal study the typical size of several other invariants of numerical semigroups in $\SS_g$.  They also study the probability that a particular positive integer $n$ is contained in a randomly chosen semigroup in $\SS_g$.  Define a step function $f_1:[0,2]\setminus\{\gamma,2\gamma\}\to [0,1]$ by
\[
f_1(x) = 
\begin{cases}
    0  & \mbox{if } 0 \leq x <\gamma \\
    \frac{\sqrt{5}-1}{2}  & \mbox{if } \gamma< x<2\gamma \\
    1 & \mbox{if } 2\gamma<x\leq 2.
\end{cases}
\]
They show that $f_1\left(\frac{n}{g}\right)$ is a good approximation for the probability that $n$ is contained in a random element of $\SS_g$, and also that these probabilities for various $n$ are asymptotically independent.  This result is closely related to our Theorem \ref{Thm: Uniform conv psi_S}. The nonzero values of $f_1$ are $\phi^{-1}$ and $1$, which are the reciprocals of the slopes of the function $\psi$.
\begin{theorem}\cite[Theorem 26]{kaplan2023expected}
Fix $\epsilon_1, \epsilon_2 > 0$.
\begin{enumerate}
\item There exists an $M(\epsilon_1, \epsilon_2)>0$ such that for all $g>M(\epsilon_1, \epsilon_2)$ and
\[
n \in \big[1,(\gamma-\epsilon_1)g\big) \cup \big((\gamma+\epsilon_1)g,(2\gamma-\epsilon_1)g\big) \cup \big((2\gamma+\epsilon_1)g,2g\big]
\]
we have
\[
\left|\P_g[n \in S]-f_{1}\left(\frac{n}{g}\right)\right|<\epsilon_2.
\]

\item There exists an $M(\epsilon_1, \epsilon_2)>0$ such that for all $g>M(\epsilon_1, \epsilon_2)$ and
\[
i,j \in \big[1,(\gamma-\epsilon_1)g\big) \cup \big((\gamma+\epsilon_1)g,(2\gamma-\epsilon_1)g\big) \cup \big((2\gamma+\epsilon_1)g,2g\big]
\]
with $i\neq j$, we have
\[
\left|\P_g[\{i,j\}\subseteq S]-f_{1}\left(\frac{i}{g}\right)f_{1}\left(\frac{j}{g}\right)\right|<\epsilon_2.
\]
\end{enumerate}
\end{theorem}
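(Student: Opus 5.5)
The plan is to condition on everything about $S$ except its intersection with a long middle window. On that window, $S$ restricted to the window is a uniformly random subset of prescribed size. Probabilities for a uniform $k$-subset of an $L$-set are explicit: $\P[n\in A]=k/L$ and $\P[\{i,j\}\subseteq A]=\frac{k(k-1)}{L(L-1)}$. So both parts of the statement reduce to showing $k/L\to\phi^{-1}$ in probability. We take the window $W=(m(S)+K,\,2m(S)-K)$ for a large constant $K$, and handle the three ranges of $n$ separately.

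\emph{Outer ranges.} By Theorem~\ref{Kaplan Mult}, $m(S)>(\gamma-\epsilon_1/2)g$ with probability tending to $1$. Any $n<(\gamma-\epsilon_1)g$ is then smaller than $m(S)$, hence a gap, so $\P_g[n\in S]\to 0=f_1(n/g)$. By \eqref{eqn:FG}, $F(S)<(2\gamma+\epsilon_1/2)g$ with probability tending to $1$. Any $n>(2\gamma+\epsilon_1)g$ then lies in $S$, so $\P_g[n\in S]\to 1$. In both ranges the estimate is uniform in $n$, and the pair statement follows in the same way whenever $i$ or $j$ lies in an outer range.

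\emph{Middle range.} Fix $\epsilon>0$ and use Theorem~\ref{F 2m} to choose $M=M(\epsilon)$ with $\P_g[|F(S)-2m(S)|>M]<\epsilon$ for all $g$. Take $K>M$ and restrict to the event $\mathcal E=\{|F-2m|\le M,\ |m-\gamma g|<\delta g\}$, which has probability at least $1-2\epsilon$ for large $g$.

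The key structural claim concerns semigroups in $\mathcal E$: fix $m$, $F$, and $S\setminus W$; then \emph{every} subset $B\subseteq W$ compatible with the genus gives a numerical semigroup $(S\setminus W)\cup B$. To check closure, note that a sum of two nonzero elements is always at least $2m>\max W$. So no sum ever has to lie in $W$. If a summand lies in $W$, the sum is at least $m+(m+K)>2m+M\ge F$, so it automatically lies in $S$. Sums of elements outside $W$ are unaffected by the choice of $B$. The multiplicity stays $m$ and the Frobenius number stays $F$, since $F\notin W$ because $F\ge 2m-M>2m-K$.

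It follows that, conditioned on the data outside $W$, $S\cap W$ is a uniform $k$-subset of $W$, where $L=|W|=m-2K-1$. The size $k$ is fixed by the genus: the gaps are the $m-1$ integers below $m$, the $L-k$ gaps in $W$, and $O(K+M)$ further gaps in $(m,m+K]\cup[2m-K,F]$. Hence $g=m+L-k+O(K)$, giving $k=2m-g+O(K)$. On $\mathcal E$,
\[
\frac{k}{L}=\frac{(2\gamma-1)g+O(\delta g)}{\gamma g+O(\delta g)}\longrightarrow\frac{2\gamma-1}{\gamma}=\frac{1/\sqrt5}{\phi/\sqrt5}=\phi^{-1}.
\]
A point $n\in((\gamma+\epsilon_1)g,(2\gamma-\epsilon_1)g)$ lies in $W$ on $\mathcal E$ once $\delta<\epsilon_1/2$ and $g$ is large. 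Averaging the conditional probabilities $k/L$ and $\frac{k(k-1)}{L(L-1)}$ over the conditioning then gives $|\P_g[n\in S]-\phi^{-1}|<\epsilon_2$. Likewise $|\P_g[\{i,j\}\subseteq S]-\phi^{-2}|<\epsilon_2$, after choosing $\epsilon,\delta$ small and $g$ large. The mixed pair cases, with one point in the middle range and one in an outer range, combine the two arguments.

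\emph{Main obstacle.} The delicate step is the structural claim. It needs the window to avoid both the region near $m$ and the region near $2m$ where $F$ may sit. It also relies on Theorem~\ref{F 2m} giving a \emph{bounded} $|F-2m|$, rather than merely $o(g)$, so that a fixed buffer $K$ suffices. If only Theorem~\ref{thm_KY_F2m} were available, I would instead take $K=\eta g$ with $\eta\to0$ slowly. The genus bookkeeping still gives $k/L\to\phi^{-1}$ because the boundary error is $O(\eta g)$.
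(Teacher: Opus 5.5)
The paper does not prove this statement; it quotes it from \cite{kaplan2023expected} as background, so there is no in-paper proof to compare with. Your argument is correct on its own terms. The outer ranges follow uniformly in $n$ from Theorem~\ref{Kaplan Mult} and \eqref{eqn:FG}. The structural claim also holds on $\{|F-2m|\le M\}$ with $K\ge M$, for three reasons:
\begin{enumerate}
\item every sum of two nonzero elements is at least $2m>\max W$;
\item any sum with a summand in $W$ is at least $2m+K+1>F$;
\item neither $m$ nor $F$ lies in $W$.
\end{enumerate}
So the fiber over $(m,F,S\setminus W)$ is exactly the family of $k$-subsets of $W$, with $k=2m-g+O(K)$. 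This gives $k/L\to(2\gamma-1)/\gamma=\phi^{-1}$ and $k(k-1)/(L(L-1))\to\phi^{-2}$, uniformly over the middle range. The mixed pair cases reduce to the outer-range estimates, as you say.

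The closest thing in the paper is the machinery behind Theorem~\ref{Thm: pointwise convergence}, and your window is a unified version of it. The paper splits into $\B(g)$ and $\cC(g)$. In depth $3$ it further conditions on $l=F-2m$ and on $(A_1,A_2)$, using Propositions~\ref{Zhao F<2m} and~\ref{Prop: parameterize C(g,m,l)}. As a result, the free block is exactly $[m+1,2m-1]$ or $[m+l+1,2m-1]$, with exactly known size. That exactness is what the paper needs in order to apply the order-statistic formulas of Theorem~\ref{Thm: order statistic} to $a_k(S)$.

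Your symmetric buffer gives up the exact block in exchange for a single conditioning that treats depths $2$ and $3$ together. It needs only inclusion probabilities of a uniform $k$-subset, which is all this statement requires.

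One simplification: you need neither the bounded-gap Theorem~\ref{F 2m} nor $\eta\to 0$. Take $K=\eta g$ with a fixed $\eta$ small compared with $\epsilon_1$ and $\epsilon_2$. Then Theorem~\ref{thm_KY_F2m} makes the structural claim hold with probability tending to $1$. The resulting error $O(\eta+\delta)$ in $k/L$ is harmless because $\epsilon_2$ is fixed.
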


\subsection{A decomposition of the set of numerical semigroups of genus $g$}

In this subsection we explain a useful way to divide up the set of numerical semigroups of genus $g$ that we will use in the proof of Theorem \ref{Thm: Uniform conv psi_S}.  The \emph{depth} of a numerical semigroup $S$ is $q(S)=\left\lceil\frac{F(S)}{m(S)}\right\rceil$.  In the previous section, we described results of Zhai, Kaplan and Ye, Singhal, and Zhu, which all prove quantitative statements implying that the proportion of numerical semigroups that have depth larger than $3$ is not too large.  In our proof of Theorem \ref{Thm: Uniform conv psi_S} we pay particular attention to semigroups of depth $2$ and to semigroups of depth $3$.

We define the following two subsets of $\SS_g$ according to their depth:
\begin{eqnarray*}
\B(g) & = & \{S\in\SS_g \mid F(S)<2m(S)\},\\
\cC(g)& = & \{S\in\SS_g\mid 2m(S)<F(S)<3m(S)\}.
\end{eqnarray*}

We first focus on the subset of numerical semigroups of depth $2$ and recall a result of Zhao \cite{zhao2010constructing}.  We further partition the elements of $\B(g)$ by multiplicity.  Let
\[
\B(g,m)=\{S\in\SS_g\mid m(S)=m,F(S)<2m\}.
\]

If $A$ and $B$ are finite sets, we write $A+B = \{a+b\colon a \in A,\ b \in B\}$.  If $B$ is a set consisting of a single integer $b$, we will often write $A+b$ instead of $A+B$.

\begin{proposition}\cite[Corollary 2.2]{zhao2010constructing}\label{Zhao F<2m}
Numerical semigroups in $\B(g,m)$ are in bijection with subsets $B\subseteq \{1,2,\dots,m-1\}$ of size $2m-g-2$. The bijection is as follows.  Given such a subset $B$, let
\[
S_{m,B}=(m+B)\cup\{0,m,2m\rightarrow\} \in \B(g,m).
\]
\end{proposition}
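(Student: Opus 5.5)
The plan is to prove that the two maps $S\mapsto B:=(S\cap(m,2m))-m$ and $B\mapsto S_{m,B}$ are mutually inverse, after checking that each lands in the right set. Everything follows from one observation: in a semigroup with multiplicity $m$, any sum of two nonzero elements is at least $2m$. So the condition $F(S)<2m$ leaves the window $(m,2m)$ completely unconstrained by closure under addition.

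\textbf{From $S$ to $B$.} Let $S\in\B(g,m)$. Since $m(S)=m$, the integers $1,\dots,m-1$ are gaps and $m\in S$. Since $F(S)<2m$, every integer $\ge 2m$ lies in $S$. Hence
\[
S=\{0,m\}\cup\big(S\cap(m,2m)\big)\cup\{2m\rightarrow\}.
\]
Put $B=(S\cap(m,2m))-m$, which is a subset of $\{1,\dots,m-1\}$, so that $S=S_{m,B}$.

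\textbf{Counting gaps.} The gaps of $S$ are the $m-1$ integers in $[1,m-1]$ together with the $(m-1)-|B|$ integers of $(m,2m)$ not in $m+B$. Therefore $g=2m-2-|B|$, that is, $|B|=2m-g-2$. In particular the map $S\mapsto B$ lands in subsets of the stated size, and it is injective because $S$ is recovered as $S_{m,B}$.

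\textbf{From $B$ to $S_{m,B}$.} Conversely, take any $B\subseteq\{1,\dots,m-1\}$ with $|B|=2m-g-2$. The set $S_{m,B}$ contains $0$ and has finite complement. It is closed under addition, because any sum of two of its nonzero elements is at least $2m$ and so already lies in $S_{m,B}$. Its smallest nonzero element is $m$. Its Frobenius number is less than $2m$ because every integer $\ge 2m$ is included. The same gap count as above gives genus $2m-2-|B|=g$. So $S_{m,B}\in\B(g,m)$, and the two maps are mutually inverse.

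\textbf{Edge case.} The only point needing any care is the degenerate case $m=1$. Then $S=\N_0$ and $g=0$, and the formula still holds with $B=\emptyset$ (indeed $2m-g-2=0$). Otherwise there is no real obstacle: the argument is a direct verification.
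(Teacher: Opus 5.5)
Your proof is correct. The paper does not prove this statement itself (it is quoted from Zhao), but your argument is the same one the paper gives for its depth-$3$ analogue, Proposition~\ref{Prop: parameterize C(g,m,l)}: since any sum of two nonzero elements is at least $2m$, closure under addition places no constraint on $S\cap(m,2m)$, and the size condition $|B|=2m-g-2$ comes from your gap count.
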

\noindent Note that $\B(g,m)\neq\emptyset$ if and only if $\frac{g}{2}+1\leq m\leq g+1$.

We now describe a decomposition of the set of numerical semigroups in $\SS_g$ of depth $3$.  We first divide up the set of elements of $\cC(g)$ by multiplicity, and then further partition this subset with fixed multiplicity by the value of $F(S)-2m(S)$.  We define two sets:
\begin{eqnarray*}
\cC(g,m) & = & \{S\in\cC(g)\mid m(S)=m\},\\
\cC(g,m,l) & = & \{S\in\cC(g,m)\mid F(S)=2m(S)+l\}.
\end{eqnarray*}

Consider the map from $\cC(g,m,l)$ to $2^{\N}\times 2^{\N}$ defined by
\[
\chi_{g,m,l}(S) =\left(S\cap [m,m+l],S\cap [2m,2m+l]\right).
\]
We now describe how to decompose $\cC(g,m,l)$ in terms of $\chi_{g,m,l}$.
\begin{proposition}\label{Prop: parameterize C(g,m,l)}
Suppose $(A_1,A_2)$ is in the image of $\chi_{g,m,l}$. Then, the preimage of $(A_1,A_2)$ under $\chi_{g,m,l}$ is in bijection with subsets $B\subseteq \{1,2,\dots,m-l-1\}$ of size $2m-g+l-|A_1|-|A_2|$. The bijection is as follows.  Given such a subset $B$, let
\[S_B=\{0\}\cup A_1 \cup (m+l+ B)\cup A_2\cup\{2m+l+1\rightarrow\} \in \cC(g,m,l).\]
\end{proposition}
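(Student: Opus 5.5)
The plan is to read off the structure of a semigroup $S \in \cC(g,m,l)$ interval by interval, and to observe that the only nontrivial closure conditions involve $A_1$ and $A_2$ alone, never $B$.

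\textbf{Setup.} Since $m(S)=m$ and $F(S)=2m+l$ with $2m<F<3m$, we have $0<l<m$. The nonzero elements of $S$ below $F$ lie in $[m,2m+l-1]$, and $S$ contains every integer $\ge 2m+l+1$. Split $[m,2m+l]$ into three blocks:
\begin{itemize}
\item $[m,m+l]$, which meets $S$ in $A_1$;
\item $[m+l+1,2m-1]$;
\item $[2m,2m+l]$, which meets $S$ in $A_2$.
\end{itemize}
Given $S$ in the preimage of $(A_1,A_2)$, define $B=(S\cap[m+l+1,2m-1])-(m+l)\subseteq\{1,\dots,m-l-1\}$. Then $S=S_B$ by construction, and $B\mapsto S_B$ is visibly injective.

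\textbf{Size of $B$.} The gaps of $S$ are exactly the integers in $[1,2m+l]$ not in $S$. Hence
\[
g=(2m+l)-|A_1|-|B|-|A_2|, \qquad\text{so}\qquad |B|=2m-g+l-|A_1|-|A_2|.
\]
This gives the map from the preimage into subsets of the stated size.

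\textbf{Surjectivity.} This is the main step. Fix any $B\subseteq\{1,\dots,m-l-1\}$ of that size; we must show $S_B\in\cC(g,m,l)$ and $\chi_{g,m,l}(S_B)=(A_1,A_2)$. Since $(A_1,A_2)$ lies in the image, pick some $S_0\in\cC(g,m,l)$ with $\chi_{g,m,l}(S_0)=(A_1,A_2)$. From $S_0$ we know:
\begin{itemize}
\item $m\in A_1$ and $A_1\subseteq[m,m+l]$;
\item $2m+l\notin A_2$;
\item $(A_1+A_1)\cap[2m,2m+l]\subseteq A_2$, and $2m+l\notin A_1+A_1$.
\end{itemize}

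Now check closure of $S_B$ under addition. Take nonzero $x,y\in S_B$ with $x+y\le 2m+l$; sums larger than $2m+l$ are automatically in $S_B$. Since $x,y\ge m$, we get $x,y\le m+l$, so both lie in $A_1$. Such sums are therefore controlled by the conditions inherited from $S_0$: they lie in $A_2$ and never equal $2m+l$. Any sum involving an element of $m+l+B$ is at least $2m+l+1$, and any sum involving $A_2$ or larger is at least $3m>2m+l$; both land in $S_B$.

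It follows that $S_B$ is a numerical semigroup with multiplicity $m$ and Frobenius number $2m+l$. Its genus is $g$ by the count in the previous step. Finally, $S_B\cap[m,m+l]=A_1$ and $S_B\cap[2m,2m+l]=A_2$, which completes the bijection.

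The only subtle point is that closure must not depend on $B$. This holds because any sum reaching the window $[2m,2m+l]$ uses only elements of $A_1$, and that is exactly why the argument uses the hypothesis that $(A_1,A_2)$ is in the image of $\chi_{g,m,l}$.
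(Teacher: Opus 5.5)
Your proof is correct and follows essentially the same route as the paper: fix a witness $S_0$ in the preimage, then observe that any sum $x+y\le 2m+l$ of nonzero elements of $S_B$ forces $x,y\in A_1$, so $x+y\in S_0\cap[2m,2m+l]=A_2\subseteq S_B$. You also spell out the reverse direction that the paper treats as evident: recovering $B$ from $S$, injectivity of $B\mapsto S_B$, and the gap count giving $|B|$.
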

\begin{proof}
Since $(A_1,A_2)$ is in the image of $\chi_{g,m,l}$, there is some numerical semigroup $S$ for which $\chi_{g,m,l}(S)=(A_1,A_2)$, $g(S)=g$, $m(S)=m$ and $F(S)=2m+l$.

Now consider a subset $B\subseteq \{1,2,\dots,m-l-1\}$ of size $2m-g+l-|A_1|-|A_2|$. We want to show that $S_B$ is a numerical semigroup with $\chi_{g,m,l}(S_B)=(A_1,A_2)$.
It is clear that $m(S_B)=m$, $F(S_B)=2m+l$, and $g(S_B)=g$, so we only need to show that $S_B$ is closed under addition. 

Consider $x,y\in S_B\setminus\{0\}$. 
Assume for the sake of contradiction that $x+y\notin S_B$.
Since $x,y\geq m$, we know that $x+y\geq 2m$. Since $x+y\notin S_B$, we know that $x+y\leq 2m+l$.

Notice that $x,y\geq m$ and $x+y\leq 2m+l$ imply $x,y\leq m+l$. Therefore, $x,y\in S_B\cap [m,m+l]=A_1$, and so $x,y\in S$.
This means $x+y\in S\cap [2m,2m+l]$, that is, $x+y\in A_2$. However, this contradicts the assumption that $x+y\notin S_B$.
We conclude that $x+y\in S_B$, hence $S_B$ is a numerical semigroup. This completes the proof.
\end{proof}
Zhao also considers the collection of semigroups $\cC(g,m,l)$ in \cite{zhao2010constructing}.  Proposition 3.3 of that paper gives a decomposition that is similar to the one in Proposition \ref{Prop: parameterize C(g,m,l)}, but we find it more convenient to work with the version given here.

\subsection{Ordering by Frobenius Number}

Backelin proved an asymptotic approximation for $N_{\Frob}(F)$ \cite{backelin1990number}.

\begin{theorem}\cite[Proposition 1]{backelin1990number}
There are constants $c_1, c_2 > 0$ such that
\begin{eqnarray*}
\lim_{\substack{F \rightarrow\infty \\ F \text{ odd}}} \frac{N_{\Frob}(F)}{\sqrt{2}^F} & = & c_1,\\
\lim_{\substack{F \rightarrow\infty \\ F \text{ even}}} \frac{N_{\Frob}(F)}{\sqrt{2}^F} & = & c_2.
\end{eqnarray*}
\end{theorem}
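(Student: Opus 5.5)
Let $N=\lfloor (F-1)/2\rfloor$ be the number of integers in the open interval $(F/2,F)$. The plan is to normalize the count by $2^{N}$ and show that $N_{\Frob}(F)/2^{N}$ is a convergent sum, along each parity class, of terms that stabilize as $F$ grows. Since $2^N$ equals $\sqrt{2}^{F}/\sqrt 2$ for $F$ odd and $\sqrt 2^{F}/2$ for $F$ even, this gives the two constants $c_1,c_2$.

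\textbf{Step 1 (lower bound and the main family).} If $T\subseteq (F/2,F)$ is arbitrary, then $\{0\}\cup T\cup\{F+1\rightarrow\}$ is a numerical semigroup with Frobenius number $F$. Indeed, any two nonzero elements sum to more than $F$. Hence $N_{\Frob}(F)\ge 2^{N}$. These are exactly the semigroups with $m(S)>F/2$. They play the role for Frobenius number that the family $\B(g)$ of Proposition~\ref{Zhao F<2m} plays for genus.

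\textbf{Step 2 (parametrize by the part below $F/2$).} Write $m=m(S)=\lceil F/2\rceil-k$ with $k\ge 0$. I would record $S$ by two pieces of data: its ``low profile'' $L=S\cap[m,F/2]$, written in the shifted coordinates $\lceil F/2\rceil - x$, and the remaining elements in $(F/2,F)$. The low profile places constraints on the elements above $F/2$ in two ways.
\begin{itemize}
\item Each $x\in L$ excludes $F-x$, because $x$ and $F-x$ cannot both lie in $S$.
\item Each element $x\in S$ with $m+x<F$ forces $m+x\in S$, and more generally $x+y$ must lie in $S$ whenever $x,y\in S$ and $x+y<F$.
\end{itemize}
In the shifted coordinates these constraints depend only on $L$, on $k$, and on the parity of $F$, once $F$ is large compared with $k$. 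Consequently, for fixed $k$ and fixed parity, the number of completions of $L$ divided by $2^{N}$ is independent of $F$ for $F\gg k$, say equal to $w_{\pm}(L)$. The ratio $N_{\Frob}(F)/2^N$ therefore converges termwise to
\[
c_{\pm}=\sum_{k\ge 0}\sum_{L} w_{\pm}(L),
\]
where the inner sum is over all admissible low profiles with that value of $k$.

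\textbf{Step 3 (tail bound, the main obstacle).} To pass the limit through the sum by dominated convergence, I need a uniform estimate of the form
\[
\#\{S\in\S_{\Frob}(F): m(S)\le F/2-k\}\le C\theta^{k}2^{N}\quad\text{for some }\theta<1 .
\]
The naive bound is not enough. Pair each $x\in(m,F-m)$ with $F-x$; each pair offers the three choices ``neither, $x$, $F-x$''. Leave the top block $(F-m,F)$ free. Each chosen middle element $x$ kills one free choice in the top block, namely $x+m$. This bookkeeping already gives weight $1+\tfrac12+\tfrac12=2$ per pair, so the total is only $O(2^N)$, with no decay in $k$.

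The decay must come from the further closure constraints. Two chosen middle elements $x,y$ force $x+y$. The forced elements $x+m$ in turn exclude their complements. And $m$ itself forces $2m,3m,\dots$ below $F$ and excludes $F-m$. My first attempt would be a transfer-matrix or recursive argument on $k$, in the spirit of Zhai's proof of Theorem~\ref{thm_zhai} and Zhu's exponential bound. I would show that each additional unit of depth below $F/2$ costs a fixed factor $\theta<1$. The key observation is that a gap of length at least two in $L$ near $m$ forces a long run of consecutive elements in the top block, and this loses a constant fraction of entropy. As a fallback, the cruder partition by depth (as with $\cC(g,m,l)$ in Proposition~\ref{Prop: parameterize C(g,m,l)}) could give summable bounds on $m\in(F/3,F/2]$. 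The remaining case $m\le F/3$ would then be handled by a trivial $3^{F/3}\cdot 2^{F/3}$-type count, which is exponentially smaller than $\sqrt2^{F}$.

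With the tail bound in hand, dominated convergence yields the limits $c_1$ (for $F$ odd) and $c_2$ (for $F$ even). Both are positive by Step 1.
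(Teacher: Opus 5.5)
The paper gives no proof of this statement; it quotes it from Backelin. So I judge your proposal on its own.

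\textbf{What works.} Your architecture is sound:
\begin{itemize}
\item there are exactly $2^N$ semigroups with $m(S)>F/2$;
\item for fixed $k$ and fixed parity, the number of semigroups with $m=\lceil F/2\rceil-k$ equals $a_\pm(k)2^N$, with $a_\pm(k)$ independent of $F$ once $F>6k$ (this is exact: for $F/3<m<F/2$, Proposition~\ref{Prop: parameterize C(F,m,g) by frob} summed over $g$ gives the count as $2^{3m-F-1}$ times the number of admissible pairs $(A_1,A_2)$);
\item a uniform summable tail would then finish the proof by dominated convergence.
\end{itemize}

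\textbf{The gap.} Step~3 is the whole content of the theorem, and you do not prove it. (The tail you need is essentially Backelin's Proposition~2, quoted as Theorem~\ref{thm: backelin m=F/2}, so it cannot simply be borrowed.)

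The one concrete estimate you offer is false. We have $3^{F/3}2^{F/3}=6^{F/3}\approx 1.82^F$, which is exponentially \emph{larger} than $\sqrt2^F\approx 1.41^F$. No count of that crude type covers all $m\le F/3$. For example, the Ap\'ery-set (Kunz coordinate) bound gives at most $q^{m-1}<q^{F/(q-1)}$ semigroups of multiplicity $m$ and depth $q$. This beats $\sqrt2^F$ only for $q\ge 7$. Depths $4,5,6$ need the Kunz inequalities, as in Li's count $\lfloor (q+1)^2/4\rfloor^{F/(2q-2)+o(F)}$, or Backelin's own argument.

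For $F/3<m<F/2$ you only state a hope. The mechanism you propose, that gaps in $L$ force runs in the top block, is not the right one. A missing element forces nothing into $S$. Top-block elements are forced by the elements of $S$ below $F/2$, through their sumset.

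\textbf{How to close the depth-$3$ half.} Take $F=2n+1$, $m=n+1-k$ with $1\le k<\frac{n+2}{3}$, and write $S\cap[m,F-m]=m+B$. The exact count is $2^{n-k}\Sigma_k$, where
\[
\Sigma_k=\sum_{B}2^{-|(B+B)\cap[0,2k-2]|},
\]
and $B$ ranges over subsets of $[0,2k-2]$ that contain $0$ and have no two elements summing to $2k-1$. Your pair bookkeeping gives only $\Sigma_k\le 2^{k-2}$, i.e.\ $a(k)\le\frac14$, with no decay.

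Instead, split $B=L_0\cup H$ with $L_0=B\cap[0,k-1]$ and $H=B\cap[k,2k-2]$. Then:
\begin{itemize}
\item $(B+B)\cap[0,2k-2]\supseteq (L_0+L_0)\cup H$;
\item $|L_0+L_0|\ge 2|L_0|-1$;
\item for fixed $L_0$, the admissible $H$ range over subsets of a set of $k-|L_0|$ positions.
\end{itemize}
Summing over $H$ first bounds the total weight attached to each $L_0$ by $\frac12\,3^{1-|L_0|}(3/2)^{k-|L_0|}$. Summing over $L_0$ gives $\Sigma_k\le\frac12(11/6)^{k-1}$, hence $a(k)\le\frac14(11/12)^{k-1}$. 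The even case is identical.

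This gives a uniform, summable depth-$3$ tail. Combined with a genuine $O(\beta^F)$ bound with $\beta<\sqrt2$ for $m<F/3$, your dominated-convergence argument would go through. As written, both halves of Step~3 are missing.
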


Backelin also proved that most numerical semigroups have multiplicity close to $F/2$.
\begin{theorem}\cite[Proposition 2]{backelin1990number}\label{thm: backelin m=F/2}
Given $\epsilon>0$, there is $M>0$ such that for every $F$, we have
\[\P_{\Frob,F}\Big[\Big|m(S)-\frac{F}{2}\Big|>M\Big]<\epsilon.\]
\end{theorem}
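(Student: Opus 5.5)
Since this is a cited result of Backelin, the authors probably rely on \cite{backelin1990number} rather than reprove it. To prove it directly, I would compare the number of semigroups in $\S_{\Frob}(F)$ with a given multiplicity $m$ against $2^{F/2}$, which matches the order of $N_{\Frob}(F)$ by Backelin's Proposition 1. It then suffices to show two things: the count for multiplicity $m$ decays geometrically in $|m-F/2|$, and the total over $m \le F/3$ is $o(2^{F/2})$.

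\emph{Large multiplicity.} If $m>F/2$ then $F<2m$. The semigroups with multiplicity $m$ and Frobenius number $F$ are exactly $\{0,m\}\cup A\cup\{F+1\rightarrow\}$ with $A\subseteq[m+1,F-1]$ arbitrary, because every sum of two nonzero elements is at least $2m>F$. So there are exactly $2^{F-m-1}$ of them. Summing over $m>F/2+M$ gives at most $2^{F/2-M}$, and dividing by $N_{\Frob}(F)\asymp 2^{F/2}$ gives a proportion $O(2^{-M})$.

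\emph{Multiplicity between $F/3$ and $F/2$.} Write $S\cap[m,F]=\{m\}\cup A\cup B$, where $A=S\cap(m,F-m]$ and $B=S\cap(F-m,F)$. The elements of $[1,m-1]$ are gaps, so their partners $F-x\in(F-m,F)$ are unconstrained by the symmetry condition. Inside $[m,F-m]$ the pairs $\{x,F-x\}$ contribute at most one element each. Because $m>F/3$, every $x\in A$ satisfies $x+m\in(F-m,F]$, and every sum $a+a'$ with $a,a'\in A\cup\{m\}$ is either $>F$ or lands in $(F-m,F)$. Hence $B$ must contain the forced set $T(A)=\bigl((A\cup\{m\})+(A\cup\{m\})\bigr)\cap(F-m,F)$, and $B\setminus T(A)$ is free, which bounds the count by $\sum_A 2^{\,m-1-|T(A)|}$.

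If only the forcing $x\mapsto x+m$ is used, $|T(A)|\ge|A|$, and each pair contributes weight $1+\tfrac12+\tfrac12=2$. This gives exactly $2^{m-1}2^{(F-2m)/2}=2^{F/2-1}$ with no decay. The decay must therefore come from the extra forced elements $A+A$. I would phrase this probabilistically: under the product measure that picks from each pair $\{x,F-x\}\subseteq[m,F-m]$ none with probability $1/2$ and each element with probability $1/4$, the count equals $2^{F/2-1}\,\E\bigl[2^{-(|T(A)|-|A|)}\bigr]$. I would then show this expectation is at most $C\rho^{\,F/2-m}$ for some $\rho<1$.

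Here $A$ has about $(F/2-m)/2$ elements, so it is nonempty with high probability. For a fixed $a_0\in A$, the translates $a_0+A$ are spread over an interval of length about $F-2m$ and meet $(F-m,F)\setminus(A+m)$ in a positive proportion of positions. A Chernoff or second-moment bound should then give $|T(A)|-|A|\ge c(F/2-m)$ except with probability exponentially small in $F/2-m$, and this exceptional event contributes a factor at most $1$.

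\emph{Small multiplicity.} For $m\le F/3$, the depth is at least $3$, and I would use a cruder argument. The set $S\cap[1,F]$ is determined by its Ap\'ery-type data modulo $m$ and by the pairs constraint. Iterating the same window decomposition with windows of length $m$ should give at most $2^{(1/2-\delta)F}$ such semigroups for some $\delta>0$, which is negligible after summing over the $O(F)$ values of $m$. Summing the three ranges, the proportion with $|m-F/2|>M$ is at most $O(2^{-M})+O(\rho^{M})+o(1)$, which is less than $\epsilon$ for $M$ large. For the finitely many small $F$ that remain, I would enlarge $M$ beyond $F$, which makes the event empty.

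The main obstacle is the middle range, namely the lower-tail estimate on $|T(A)|-|A|$. The naive count is exactly balanced, so all of the decay has to come from sumset growth of a random set in an interval, and this must be uniform in $m$ including $m$ close to $F/3$.
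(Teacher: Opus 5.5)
The paper does not prove this statement; it is quoted from Backelin \cite{backelin1990number}, so your argument has to stand on its own. Your case $m>F/2$ is correct.

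Your middle-range reduction is actually an exact identity. Put $L=F-2m$ and $A'=A-m\subseteq[1,L-1]$. Then $|T(A)|-|A|=1+|((A'+A')\cap[1,L-1])\setminus A'|$, and the law of $A'$ depends only on $L$. So uniformity in $m$, including $m$ near $F/3$, is automatic rather than the main obstacle.

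What this range does need is a lower-tail bound that is summable in $L$. Chebyshev gives only $O(1/L)$, since the variance is of order $L$, and $\sum_{2M\le L\le F/3}1/L$ grows like $\log F$. Chernoff works with a little care:
\begin{itemize}
\item restrict to positions in $[1,L/2)$, which are independent Bernoulli$(1/4)$ under your measure;
\item use disjoint pairs $\{i,i+a_0\}$;
\item union-bound over $a_0\le\beta L$;
\item pay $(3/4)^{\beta L}$ for the event $A'\cap[1,\beta L]=\emptyset$.
\end{itemize}

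The genuine gap is the range $m<F/3$. You treat it as the ``cruder'' case, but Ap\'ery data plus the pair constraint do not give $2^{(1/2-\delta)F}$ uniformly in $m$. Take $F=3m+r$ with $0<r<m$, and write the Ap\'ery elements as $w_i=k_im+i$.
\begin{itemize}
\item The Frobenius condition forces $k_i\le 4$ for $i<r$ and $k_i\le 3$ for $i>r$.
\item The symmetry condition becomes $k_i+k_j\ge 4$ when $i+j=r$, and $k_i+k_j\ge 3$ when $i+j=m+r$.
\item That leaves $13$ choices per residue pair below $r$ and $8$ per pair above $r$.
\end{itemize}
The resulting bound is, up to bounded factors, $13^{r/2}8^{(m-r)/2}=2^{F/2}(13/16)^{r/2}$. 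This decays only in $r=F-3m$, not in $F/2-m$. Summed over $m<F/3$ it is of order $2^{F/2}$, comparable to $N_{\Frob}(F)$ itself, even though every such $m$ has $|m-F/2|>F/6$.

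Your window forcing $x\mapsto x+m$ is already built into these Ap\'ery coordinates, so it does no better there. For $m$ just below $F/3$ you therefore need the same kind of $A+A$ sumset gain that drives your middle range, now in depth $4$.

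One alternative is to use Li's depth count quoted in the paper, which gives $6^{F/6+o(F)}$ in depth $4$. Apply it for finitely many depths, since the $o(F)$ is only for fixed depth, and use the crude Ap\'ery bound for large depth. As written, ``iterating the same window decomposition should give $2^{(1/2-\delta)F}$'' is exactly the unproved step. Without it, your final accounting leaves a non-vanishing proportion of $\S_{\Frob}(F)$ unbounded.
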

This is an analogue of Theorem~\ref{F 2m} for numerical semigroups ordered by Frobenius number.
This result implies that as $F$ grows to infinity most numerical semigroups with Frobenius number $F$ have depth $2$ or $3$.
In fact, Li proves an upper bound on the number of semigroups with Frobenius number $F$ and larger depth in \cite{li2023counting}.
\begin{theorem}\cite[Corollary 1.4]{li2023counting}
The number of numerical semigroups in $\S_{\Frob}(F)$ with depth $q$ is 
\[
\left\lfloor\frac{(q+1)^2}{4}\right\rfloor^{\frac{F}{2q-2} +o(F)}.
\]
\end{theorem}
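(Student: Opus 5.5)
This statement is quoted from \cite{li2023counting}, and the paper does not reprove it. If I had to prove it myself, I would pass to Kunz coordinates. Fix the multiplicity $m$. A numerical semigroup $S$ with $m(S)=m$ is determined by the tuple $(k_1,\dots,k_{m-1})$, where $r+k_rm$ is the smallest element of $S$ congruent to $r \pmod m$. These tuples are exactly the positive integer solutions of the Kunz inequalities
\[
k_i+k_j\geq k_{i+j}\ (i+j<m), \qquad k_i+k_j+1\geq k_{i+j-m}\ (i+j>m).
\]
Having depth $q$ and Frobenius number $F$ means $(q-1)m<F<qm$. In Kunz terms, every $k_r\leq q$, and $F=r_0+(k_{r_0}-1)m$ is the largest of the numbers $r+(k_r-1)m$. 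So the count is a sum over $m\in(F/q,F/(q-1))$ of the number of lattice points in a slice of the Kunz polytope with entries in $[1,q]$ and a prescribed maximal coordinate. There are only $O(F)$ values of $m$, which the $o(F)$ in the exponent absorbs. The whole problem is therefore to show that for a single $m$ this number is $\lfloor (q+1)^2/4\rfloor^{m/2+o(m)}$, and that this is largest when $m$ is as large as possible, namely $m\approx F/(q-1)$, which gives $m/2\approx F/(2q-2)$.

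For the lower bound I would use a box construction. Take $m=\lceil F/(q-1)\rceil+O(1)$ and split the residues into a lower half $r<m/2$ and an upper half $r>m/2$. Assign lower-half coordinates values in a set of size $s$ and upper-half coordinates values in a set of size $t$, with $s+t=q+1$, chosen so that every Kunz inequality holds automatically. Concretely, the plan is to take lower values in $\{\lceil q/2\rceil,\dots,q\}$-type ranges and upper values in a complementary range. The point is that a sum of two lower residues lands in the upper half, so the constraint $k_i+k_j\ge k_{i+j}$ is satisfied as soon as twice the minimum lower value is at least the maximum upper value. Sums of two upper residues wrap around, and those constraints are handled by a shift of one. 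I would adjust the ranges so that both families of inequalities are automatic, and then fix a bounded number of coordinates (for example $k_{r_0}=q$ at the residue of $F$) to pin down the Frobenius number. This yields $s^{m/2}t^{m/2}$ semigroups. Maximising $st$ subject to $s+t=q+1$ gives $\lfloor (q+1)^2/4\rfloor^{m/2-O(1)}$.

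For the upper bound I would pair each residue $r$ with $m-r$, or more generally with $r'$ such that $r+r'\equiv r_0$. I would show that the pair $(k_r,k_{r'})$ can take at most about $(q+1)^2/4$ values once the remaining coordinates are fixed, or at least in an averaged, entropy sense. The constraint $k_r+k_{r'}\geq k_{r_0}-\mathbb{1}[r+r'>m]\approx q$ (or $q-1$) means the pair lies in $\{(a,b)\in[1,q]^2: a+b\geq q\}$. That set has $\sim q^2/2$ points, which is too many, so this constraint alone is not sharp. One also needs the interaction between halves: the lower-half values bound the upper-half values from above through $k_i+k_j\geq k_{i+j}$. I would try an entropy/Shearer-type argument. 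Write the log-count as a sum of conditional entropies over pairs of residues, and bound each pair's entropy by $\log\lfloor (q+1)^2/4\rfloor$ using a two-variable optimisation: the ``lower'' coordinate forces the ``upper'' one to lie in an interval whose length is complementary to it.

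Proving this sharp per-pair upper bound is the step I expect to be the main obstacle. The cases of smaller $m$ (closer to $F/q$) then follow by monotonicity of the exponent $m/2$ in $m$, and summing over $O(F)$ values of $m$ finishes the argument.
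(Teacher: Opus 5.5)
The paper does not prove this statement. It is quoted from Li as background, so there is no argument here to compare yours with. Judged on its own, your proposal has a genuine gap, and its central reduction is false.

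You assert that for each single multiplicity $m\in(F/q,F/(q-1))$ the count is $\lfloor (q+1)^2/4\rfloor^{m/2+o(m)}$, and you then dispose of smaller $m$ by monotonicity of $m/2$. But the count for fixed $m$ depends strongly on $r_0=F-(q-1)m$: residues $r<r_0$ may carry Kunz coordinate $q$, while residues $r>r_0$ are capped at $q-1$.

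For $q=2$ every tuple in $\{1,2\}^{m-1}$ satisfies the Kunz inequalities, so the count is exactly $2^{r_0-1}=2^{F-m-1}$. It is largest at $m\approx F/2=F/q$ and equals $1$ at $m\approx F/(q-1)$. There your box degenerates, because the top value $q-1=1$ leaves no freedom. The exponent $F/(2q-2)$ comes out right only because $4^{F/4}=2^{F/2}$.

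For every $q\ge 2$ and $m$ just above $F/q$, the same kind of box with top value $q$ already produces $\lfloor (q+2)^2/4\rfloor^{m/2-O(1)}$ semigroups with Frobenius number $F$. Here the coordinates lie in $[\lceil q/2\rceil,q]$ below $m/2$ and in $[\lfloor q/2\rfloor,q]$ above, with the $O(1)$ residues above $r_0$ fixed. This is strictly more than $\lfloor (q+1)^2/4\rfloor^{m/2}$.

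So what you actually need is an upper bound that is uniform in $r_0$ and controls both classes of residues and their interactions. That must be followed by an optimisation over $m$ with $F$ fixed. For $q\ge 3$, even comparing the two ends of the range requires the numerical inequality $\lfloor (q+2)^2/4\rfloor^{q-1}<\lfloor (q+1)^2/4\rfloor^{q}$. None of this follows from monotonicity in $m$.

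Independently of this, the upper bound is the substance of the theorem, and your proposal does not supply it. As you note yourself, the constraint coming from $k_{r_0}=q$ only confines a pair $(k_r,k_{r'})$ to a set of size about $q^2/2$. The sharp per-pair bound $\lfloor (q+1)^2/4\rfloor$ inside an entropy or Shearer framework is stated as a hope. No mechanism is given by which one coordinate confines its partner to a complementary interval, and the Kunz inequalities are genuinely three-term constraints. Without that step there is no upper bound at all.

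Your lower bound is closer to correct for $q\ge 3$. With top value $p=q-1$, the ranges $[\lceil p/2\rceil,p]$ below the split and $[\lfloor p/2\rfloor,p]$ above have sizes summing to $q+1$ and make all Kunz inequalities automatic. Two points need fixing:
\begin{enumerate}
\item $m$ must be taken just below $F/(q-1)$, not $\lceil F/(q-1)\rceil+O(1)$.
\item Forcing $k_{r_0}=q$ is not a bounded modification. It imposes $k_i+k_j+1\ge q$ on the $\Theta(m)$ pairs with $i+j=m+r_0$, which fails for even $q$ when both entries equal $(q-2)/2$. Moving the split point from $m/2$ to $(m+r_0)/2$ repairs this.
\end{enumerate}
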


In \cite{singhal2022distribution}, Singhal shows that most numerical semigroups in $\S_{\Frob}(F)$ have genus close to $\frac{3}{4}F$.  This is the analogue of Equation \eqref{eqn:FG} following Theorem \ref{Kaplan Mult} for numerical semigroups ordered by Frobenius number.

\begin{theorem}\cite[Theorem 3]{singhal2022distribution}\label{thm: singhal genus by frob}
Given $\epsilon>0$, we have
\[
\lim_{F\to\infty} \P_{\Frob,F}\left[\left|g(S)-\frac{3}{4}F\right|<\epsilon F \right]=1.
\]
\end{theorem}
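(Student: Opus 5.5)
The plan is to reduce, via Theorem~\ref{thm: backelin m=F/2}, to semigroups whose multiplicity is within a bounded distance of $F/2$. For these, the genus is determined, up to a bounded error, by the size of a uniformly random subset of an interval of length about $F/2$. The law of large numbers for binomial variables then forces that size to be about $F/4$, so the genus is about $F-F/4=\tfrac34 F$.

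\textbf{Step 1 (reduction).} Fix $\epsilon,\delta>0$. By Theorem~\ref{thm: backelin m=F/2} choose $M$ with $\P_{\Frob,F}[|m(S)-F/2|>M]<\delta$ for every $F$. It then suffices to show that, for each fixed $m$ with $|m-F/2|\le M$, the proportion of $S\in\S_{\Frob}(F)$ with $m(S)=m$ and $|g(S)-\tfrac34F|\ge \epsilon F$ tends to $0$, uniformly in such $m$. Write $l=F-2m$, so $|l|\le 2M$ is bounded.

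\textbf{Step 2 (parametrize the fibre).} I would prove a Frobenius-number analogue of Proposition~\ref{Prop: parameterize C(g,m,l)}, with the genus left free.
\begin{itemize}
\item \emph{Case $l<0$ (so $F<2m$).} Any set of the form $S=\{0,m\}\cup(m+B)\cup\{F+1\rightarrow\}$ with $B\subseteq\{1,\dots,F-m-1\}$ is a numerical semigroup with Frobenius number $F$ and multiplicity $m$, because any two nonzero elements sum to at least $2m>F$. Every such semigroup arises this way.
\item \emph{Case $l\ge 0$.} Consider the map $S\mapsto (A_1,A_2)=(S\cap[m,m+l],\,S\cap[2m,2m+l])$. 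Exactly as in the proof of Proposition~\ref{Prop: parameterize C(g,m,l)}, each fibre is in bijection with \emph{all} subsets $B\subseteq\{1,\dots,m-l-1\}$, via $S_B=\{0\}\cup A_1\cup(m+l+B)\cup A_2\cup\{F+1\rightarrow\}$. The point is that two elements of $m+l+B$, or one such element and one of $A_1$, sum past $F$, and sums of two elements of $A_1$ are handled by $A_2$. The condition $F\notin S$ is already encoded in $A_2$.
\end{itemize}
Since $l$ is bounded, there are at most $4^{2M+1}$ pairs $(A_1,A_2)$, so the number of fibres is bounded.

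\textbf{Step 3 (genus in each fibre).} In every fibre,
\[
g(S)=F-|B|-O(M),
\]
where $B$ ranges uniformly over subsets of an interval of length $n=F/2+O(M)$. Under the uniform measure on subsets, $|B|$ is distributed as $\mathrm{Bin}(n,1/2)$. By Chebyshev (or Hoeffding),
\[
\P\big[\,\big||B|-n/2\big|>\tfrac{\epsilon}{2}F\,\big]=O\!\left(\frac{1}{\epsilon^2F}\right),
\]
uniformly over the fibre. Hence within each fibre all but an $O(1/(\epsilon^2F))$ fraction of semigroups satisfy $|g(S)-\tfrac34F|<\epsilon F$ once $F$ is large compared with $M$.

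\textbf{Step 4 (combine).} Summing over the $O(M)$ values of $m$ and the bounded number of fibres, the exceptional proportion is at most $\delta+O_M(1/(\epsilon^2F))$. Letting $F\to\infty$ and then $\delta\to0$ gives the theorem.

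I expect the main obstacle to be Step 2: checking carefully that, with $F$ fixed but the genus free, the fibres really are full Boolean cubes, with no hidden constraints coming from sums that land in $[2m,2m+l]$ or from elements between $m+l$ and $2m$. The edge cases, namely negative $l$ and the boundary element $m+l$, need the most care. The remaining steps are routine concentration and bookkeeping.
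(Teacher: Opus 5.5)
Your argument is correct. The paper gives no proof of this statement; it imports it from \cite{singhal2022distribution}, where it appears alongside a finer analysis of the distribution of genus over $\S_{\Frob}(F)$. So there is no in-paper argument to match, and your proposal is a self-contained derivation. It uses only Theorem~\ref{thm: backelin m=F/2} together with the fibre decompositions the paper already sets up:
\begin{itemize}
\item your $l<0$ case is Proposition~\ref{Prop: depth 2 by frob} restricted to a fixed multiplicity;
\item your $l>0$ case is Proposition~\ref{Prop: parameterize C(F,m,g) by frob} with the genus summed out.
\end{itemize}
Summing out the genus is exactly why each fibre becomes a full Boolean cube, so that $|B|\sim\mathrm{Bin}(n,\tfrac12)$ with $n=3m-F-1=\tfrac F2+O(M)$.

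The closure check you flagged as the main obstacle goes through as in the proof of Proposition~\ref{Prop: parameterize C(g,m,l)}. If $x,y\ge m$ and $x+y\le F=2m+l$, then $x,y\le m+l$. Hence only sums of two elements of $A_1$ can land in $[2m,F]$, and those are covered by $A_2$. Any sum involving an element of $m+l+B$ exceeds $F$.

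Some small points to tidy:
\begin{itemize}
\item The case $l=0$ is vacuous, since $2m\in S$.
\item The blocks $[m,m+l]$, $[m+l+1,2m-1]$ and $[2m,F]$ are disjoint only when $F<3m$. This holds for large $F$ once $|m-F/2|\le M$, and you should say so.
\item In Step~4 you do not need to count fibres or values of $m$. The exceptional proportion inside $\{|m(S)-F/2|\le M\}$ is a weighted average of the per-fibre proportions, each uniformly $O(1/(\epsilon^2F))$.
\end{itemize}

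Your approach gives an elementary proof with the explicit bound $\delta+O(1/(\epsilon^2F))$. It would make Section~\ref{Sec: proof for frob} self-contained apart from Backelin's theorem. It does not recover the finer information about the genus distribution obtained in the cited source, but that is not needed anywhere in this paper.
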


We define the following two subsets of $\SS_{\Frob}(F)$ according to their depth:
\begin{eqnarray*}
\B_{\Frob}(F) & = & \{S\in\SS_{\Frob}(F) \mid F<2m(S)\},\\
\cC_{\Frob}(F)& = & \{S\in\SS_{\Frob}(F)\mid 2m(S)<F<3m(S)\}.
\end{eqnarray*}

We first focus on the subset of numerical semigroups of depth $2$. We further partition $\B_{\Frob}(F)$ by genus. Let
\[
\B_{\Frob}(F,g)=\{S\in\SS_{\Frob}(F)\mid g(S)=g, F<2m(S)\}.
\]

\begin{proposition}\cite[Theorem 10]{singhal2022distribution}\label{Prop: depth 2 by frob}
Numerical semigroups in $\B_{\Frob}(F,g)$ are in bijection with subsets $B\subseteq \{1,2,\dots,\lfloor\frac{F-1}{2} \rfloor\}$ of size $F-g$. The bijection is as follows. Given such a subset $B$, let
\[
S_{F,B}=
\left( \left\lfloor \frac{F}{2} \right\rfloor+B\right) \cup\{0,F+1\rightarrow\} \in \B_{\Frob}(F).
\]
\end{proposition}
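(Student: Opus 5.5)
The plan is to verify directly that the map $B\mapsto S_{F,B}$ is well defined and lands in $\B_{\Frob}(F,g)$, and then to construct its inverse. No earlier results are needed beyond the definitions; the whole argument rests on the fact that in depth $2$ every nonzero element of $S$ below $F$ exceeds $F/2$.

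\emph{Step 1: $S_{F,B}$ is a numerical semigroup in $\B_{\Frob}(F,g)$.} Fix $B\subseteq\{1,\dots,\lfloor\frac{F-1}{2}\rfloor\}$ with $|B|=F-g$. Every element of $\lfloor F/2\rfloor+B$ lies in the interval $[\lfloor F/2\rfloor+1,\ \lfloor F/2\rfloor+\lfloor\frac{F-1}{2}\rfloor]$. The right endpoint equals $F-1$, since $\lfloor F/2\rfloor+\lfloor (F-1)/2\rfloor=F-1$ for every integer $F$. So every such element $x$ satisfies $F/2<x\le F-1$.
\begin{itemize}
\item Closure: any two nonzero elements of $S_{F,B}$ are greater than $F/2$, so their sum exceeds $F$ and therefore lies in $\{F+1\rightarrow\}\subseteq S_{F,B}$. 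Hence $S_{F,B}$ is closed under addition. It contains $0$ and has finite complement.
\item Frobenius number: $F\notin S_{F,B}$ and every integer larger than $F$ is in $S_{F,B}$, so $F(S_{F,B})=F$.
\item Multiplicity: $m(S_{F,B})$ is either $\lfloor F/2\rfloor+\min B$ or $F+1$ (the latter when $B=\emptyset$). In both cases $m>F/2$, so $F<2m$.
\item Genus: the gaps are exactly the elements of $[1,F]$ not in $\lfloor F/2\rfloor+B$. Hence $g(S_{F,B})=F-|B|=g$.
\end{itemize}

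\emph{Step 2: injectivity and surjectivity.} Injectivity is immediate, because $B$ is recovered as $(S_{F,B}\cap[1,F-1])-\lfloor F/2\rfloor$. For surjectivity, take $S\in\B_{\Frob}(F,g)$ and set $B=(S\cap[1,F-1])-\lfloor F/2\rfloor$.
\begin{itemize}
\item Range of $B$: every nonzero element of $S$ is at least $m>F/2$, hence at least $\lfloor F/2\rfloor+1$. So each element of $B$ is at least $1$. Since the elements of $S\cap[1,F-1]$ are at most $F-1$, each element of $B$ is at most $\lfloor\frac{F-1}{2}\rfloor$.
\item Size of $B$: since $F\notin S$ and all integers above $F$ lie in $S$, the genus is $g=F-|S\cap[1,F-1]|=F-|B|$. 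Thus $|B|=F-g$.
\item Recovery of $S$: from the description of $S$ on $[1,F-1]$, at $F$, and above $F$, we get $S=S_{F,B}$.
\end{itemize}

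The only place requiring care, and the main (mild) obstacle, is the boundary bookkeeping with floors. One must confirm that the shift $\lfloor F/2\rfloor$ sends $\{1,\dots,\lfloor\frac{F-1}{2}\rfloor\}$ exactly onto the integers strictly between $F/2$ and $F$. This should be checked separately for $F$ even and $F$ odd. The case $B=\emptyset$ must also be allowed; it gives the ordinary semigroup $\{0,F+1\rightarrow\}$, with $g=F$.
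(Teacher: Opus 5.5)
Your proof is correct and complete. The paper gives no proof of its own here, only a citation of Singhal's Theorem 10, and your direct verification is the standard argument, in the same spirit as the paper's proof of Proposition~\ref{Prop: parameterize C(g,m,l)} (where closure is harder because sums can land in $[2m,2m+l]$). The parity check you flag at the end is already covered by two facts you use: the identity $\lfloor F/2\rfloor+\lfloor\frac{F-1}{2}\rfloor=F-1$, and the fact that the least integer exceeding $F/2$ is $\lfloor F/2\rfloor+1$.
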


We now describe a decomposition of the set of numerical semigroups in $\SS_{\Frob}(F)$ of genus $g$ and depth $3$.
We first partition the elements of $\cC_{\Frob}(F)$ by genus and then by multiplicity. Let
\begin{align*}
\cC_{\Frob}(F,g)& =\{S\in\SS_{\Frob}(F)\mid g(S)=g,2m(S)<F<3m(S)\},\\
\cC_{\Frob}(F,g,m) & =\{S\in\SS_{\Frob}(F)\mid g(S)=g,m(S)=m,2m<F<3m\}.
\end{align*}

Note that $\cC_{\Frob}(F,g,m)=\cC(g,m,F-2m)$. Therefore, we can restate Proposition~\ref{Prop: parameterize C(g,m,l)} as follows.
\begin{proposition}\label{Prop: parameterize C(F,m,g) by frob}
Suppose $(A_1,A_2)$ is in the image of $\chi_{g,m,F-2m}$. Then, the preimage of $(A_1,A_2)$ under $\chi_{g,m,F-2m}$ is in bijection with subsets $B\subseteq \{1,2,\dots,3m-F-1\}$ of size $F-g-|A_1|-|A_2|$. The bijection is as follows.  Given such a subset $B$, let
\[S_B=\{0\}\cup A_1 \cup ((F-m)+ B)\cup A_2\cup\{F+1\rightarrow\} \in \cC_{\Frob}(F,g,m).\]
\end{proposition}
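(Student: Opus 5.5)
We deduce Proposition~\ref{Prop: parameterize C(F,m,g) by frob} directly from Proposition~\ref{Prop: parameterize C(g,m,l)}, taking $l = F-2m$. The first step is to check that this substitution is legitimate. Membership in $\cC_{\Frob}(F,g,m)$ means $g(S)=g$, $m(S)=m$, $F(S)=F$ and $2m<F<3m$. Membership in $\cC(g,m,F-2m)$ means $g(S)=g$, $m(S)=m$, $2m<F(S)<3m$ and $F(S)=2m+(F-2m)=F$. These are the same conditions, so $\cC_{\Frob}(F,g,m)=\cC(g,m,F-2m)$ as sets, as the text already observes. The map $\chi_{g,m,F-2m}$ is literally the same map on this common set.

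Next we translate each parameter in Proposition~\ref{Prop: parameterize C(g,m,l)} under $l=F-2m$:
\begin{itemize}
\item The index range $\{1,\dots,m-l-1\}$ becomes $\{1,\dots,m-(F-2m)-1\}=\{1,\dots,3m-F-1\}$.
\item The required size $2m-g+l-|A_1|-|A_2|$ becomes $F-g-|A_1|-|A_2|$.
\item The shift $m+l$ becomes $F-m$.
\item The tail $\{2m+l+1\rightarrow\}$ becomes $\{F+1\rightarrow\}$.
\end{itemize}
Substituting these into the formula for $S_B$ gives exactly the formula stated here. The bijection statement therefore transfers verbatim.

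For completeness, one can also verify the bijection directly by repeating the proof of Proposition~\ref{Prop: parameterize C(g,m,l)}. For $x,y\in S_B\setminus\{0\}$ with $x+y\le F$, we have $x,y\ge m$, so $x,y\le F-m$. Hence $x,y$ lie in $A_1=S\cap[m,F-m]$, and so $x+y\in S\cap[2m,F]=A_2$. Thus $S_B$ is closed under addition.

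The counts also match. The gaps of $S_B$ lie in $[1,F]$, and the elements of $S_B$ in that range are $A_1$, $F-m+B$ and $A_2$, which are disjoint. This gives genus $F-|A_1|-|B|-|A_2|=g$. Injectivity and surjectivity follow since $S_B\cap[F-m+1,2m-1]=F-m+B$ recovers $B$, and every preimage has this form.

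No step is a real obstacle. The only care needed is the arithmetic translation of the parameters, in particular confirming that the middle interval $[m+l+1,2m-1]$ corresponds to $F-m+\{1,\dots,3m-F-1\}$.
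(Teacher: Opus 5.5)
Your proposal is correct and follows the paper's route: the paper likewise observes that $\cC_{\Frob}(F,g,m)=\cC(g,m,F-2m)$ and restates Proposition~\ref{Prop: parameterize C(g,m,l)} with $l=F-2m$. Your parameter translations ($m-l-1=3m-F-1$, $2m-g+l=F-g$, $m+l=F-m$, $2m+l+1=F+1$) are all right. Your optional direct check reruns the paper's proof of Proposition~\ref{Prop: parameterize C(g,m,l)} in the new notation, and it also makes explicit the genus count and the injectivity/surjectivity, which the paper leaves implicit.
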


\section{From pointwise to uniform convergence}\label{sec:uniform_pointwise}

The goal of this section is to explain how Theorem~\ref{Thm: Uniform conv psi_S} follows from a statement about the pointwise convergence in probability as $g \rightarrow \infty$ of the functions $\psi_S(\alpha)$ to the function $\psi(\alpha)$.  Before getting into the proof, we review some background and tools from probability theory that we will apply throughout the rest of the paper.

\subsection{Probability background}\label{sec:prob}

Recall that for each genus $g$ we select a numerical semigroup $S$ uniformly at random from $\SS_g$, so all random quantities are functions of $S$ with respect to $(\SS_g,\P_g)$.  If $X$ is a random variable on $\SS_g$, we denote its expectation by $\E_{g}[X]$ and its variance by $\Var_{g}[X]$.  A sequence of events $E_g\subseteq \SS_g$ is said to occur asymptotically almost surely when
\[
\lim_{g\to\infty}\P_g[E_g]=1,
\]
equivalently, when $\P_g[E_g^c]\to 0$. Here $E_g^c$ is the complement of the event $E_g$ and by $\P_g[E_g^c]\to 0$, we mean the probability converges to $0$ as $g\to\infty$.

Given real-valued random variables $X_g$ on $(\SS_g,\P_g)$ and a constant $x\in\R$, we say that $X_g$ converges to $x$ in probability if for every $\epsilon>0$,
\[
\lim_{g\to\infty}\P_g\left(|X_g-x|>\epsilon\right)=0.
\]

We will consider random functions $\alpha\mapsto Y_g(\alpha)$ on $[0,1]$ and compare them with a deterministic function $y(\alpha)$. Pointwise convergence in probability means that for each fixed $\alpha\in[0,1]$ and each $\epsilon>0$,
\[
\lim_{g\to\infty}\P_g\left(|Y_g(\alpha)-y(\alpha)|>\epsilon\right)=0.
\]
Uniform convergence in probability strengthens this by requiring that for every $\epsilon>0$,
\[
\lim_{g\to\infty}\P_g\Big(\sup_{\alpha\in[0,1]}|Y_g(\alpha)-y(\alpha)|>\epsilon\Big)=0.
\]
In our application $Y_g(\alpha)=\psi_S(\alpha)$ and $y(\alpha)=\psi(\alpha)$.

We frequently use Boole’s inequality (the union bound): for any events $E_1,\dots,E_n$,
\[
\P_g\left(\bigcup_{i=1}^n E_i\right)\le \sum_{i=1}^n \P_g(E_i).
\]

We also use Chebyshev’s inequality. If $X$ is a random variable with finite variance, then for any $\epsilon>0$,
\[
\P_g\left(|X-\E_g[X]|\ge \epsilon\right)\le \frac{\Var_g(X)}{\epsilon^2}.
\]
A consequence useful to us is the following second-moment criterion: if $X_g$ are uniformly square-integrable and for some constant $\mu$ we have $\E_g[X_g]\to\mu$ and $\E_g[X_g^2]\to \mu^2$, then
\[
\Var_g(X_g)=\E_g[X_g^2]-\E_g[X_g]^2\ \longrightarrow\ 0,
\]
and therefore Chebyshev's inequality implies that $X_g\to \mu$ in probability.

Finally, we will repeatedly average conditional bounds using the law of total expectation. If $Z$ is a discrete random quantity (for example $Z=m(S)$), then
\[
\E_g[X]=\E_g\big(\E_g[X|Z]\big)=\sum_z \E_g[X|Z=z]\P_g(Z=z).
\]

\subsection{Proof of Theorem~\ref{Thm: Uniform conv psi_S} assuming pointwise convergence} 

We state the result about pointwise convergence here and defer the proof to Section~\ref{Sec: pointwise}.

\begin{theorem}\label{Thm: pointwise convergence}
Given $\alpha\in[0,1]\setminus\{\frac{1}{\sqrt{5}}\}$ and $\epsilon>0$, we have
\[\lim_{g\to\infty} \P_g[ |\psi_S(\alpha) -\psi(\alpha)|<\epsilon ] =1.\]
\end{theorem}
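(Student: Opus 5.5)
The plan is to reduce the statement about the order statistics $a_t(S)$ to a statement about counting functions, and then prove the latter with a second-moment argument. The input is \cite[Theorem 26]{kaplan2023expected}, which gives the one-point and two-point inclusion probabilities.

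\emph{Counting function and its limit.} For a real $c\in[0,2]$ set $N_S(c)=|S\cap[1,cg]|$. The basic observation is that $a_t(S)\le x$ if and only if $|S\cap[1,x]|\ge t$. Define the candidate limit
\[
h(c)=\int_0^c f_1(x)\,dx=\begin{cases}0 & 0\le c\le\gamma,\\ \phi^{-1}(c-\gamma) & \gamma\le c\le 2\gamma,\\ \phi^{-1}\gamma+(c-2\gamma) & 2\gamma\le c\le 2.\end{cases}
\]
Using $\gamma/\phi=1/\sqrt5$ and $2\gamma=1+1/\sqrt5$, one checks that $h(2)=1$, that $h$ is continuous and strictly increasing on $[\gamma,2]$, and that its inverse on $[0,1]$ is exactly $\psi$. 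Indeed, $\alpha=\phi^{-1}(c-\gamma)$ gives $c=\gamma+\phi\alpha$ for $\alpha\le 1/\sqrt5$, and $\alpha=1/\sqrt5+c-2\gamma$ gives $c=1+\alpha$ otherwise.

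\emph{Step 1: $N_S(c)/g\to h(c)$ in probability for each fixed $c$.} Fix $\epsilon_1,\epsilon_2>0$. Let $W\subseteq[1,cg]$ be the set of integers lying outside the two windows $((\gamma\pm\epsilon_1)g)$ and $((2\gamma\pm\epsilon_1)g)$. Write $N_S(c)=N'_S+N''_S$, where $N'_S=|S\cap W|$ and $0\le N''_S\le 4\epsilon_1 g+2$ deterministically.
\begin{itemize}
\item By part (1) of \cite[Theorem 26]{kaplan2023expected}, $\E_g[N'_S]=\sum_{n\in W}f_1(n/g)+O(\epsilon_2 g)$, and this sum is $g\,h(c)+O(\epsilon_1 g)+o(g)$ by a Riemann sum argument.
\item By part (2), $\E_g[(N'_S)^2]=\sum_{i\ne j\in W}f_1(i/g)f_1(j/g)+O(\epsilon_2g^2)+O(g)$, which equals $g^2h(c)^2+O((\epsilon_1+\epsilon_2)g^2)+o(g^2)$.
\item Hence $\Var_g(N'_S/g)=O(\epsilon_1+\epsilon_2)+o(1)$.
\end{itemize}
Chebyshev's inequality, together with the trivial bound on $N''_S$, gives $\P_g[|N_S(c)/g-h(c)|>\delta]\to0$ for every $\delta>0$, since $\epsilon_1,\epsilon_2$ can be taken arbitrarily small relative to $\delta$. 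The main technical care is here: the error terms in \cite[Theorem 26]{kaplan2023expected} are uniform in $n$ once $g>M(\epsilon_1,\epsilon_2)$. So summing $O(g)$ one-point errors and $O(g^2)$ pair errors costs only $O(\epsilon_2)$ after normalization, and the excluded windows are absorbed by their size $O(\epsilon_1 g)$.

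\emph{Step 2: translate to $\psi_S$.} Fix $\alpha\in(0,1]$ and $\epsilon>0$ small, and put $t=1+\lfloor\alpha(g-1)\rfloor$, so that $t/g\to\alpha$.
\begin{itemize}
\item Upper deviation. The event $\psi_S(\alpha)>\psi(\alpha)+\epsilon$ means $a_t>(\psi(\alpha)+\epsilon)g$, i.e.\ $N_S(\psi(\alpha)+\epsilon)<t$. Since $h$ is strictly increasing on $[\gamma,2]$ and $\psi(\alpha)\ge\gamma$, we have $h(\psi(\alpha)+\epsilon)>\alpha$ when $\psi(\alpha)+\epsilon\le2$. If $\psi(\alpha)+\epsilon>2$, the event is empty because $a_t\le 2g$. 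By Step 1 the event has vanishing probability.
\item Lower deviation. The event $\psi_S(\alpha)<\psi(\alpha)-\epsilon$ implies $N_S(\psi(\alpha)-\epsilon)\ge t$. Here $h(\psi(\alpha)-\epsilon)<\alpha$ because $\psi(\alpha)-\epsilon<\psi(\alpha)$ and $h$ is either strictly increasing at that point or $0<\alpha$. Again this event has vanishing probability by Step 1.
\end{itemize}
The case $\alpha=0$ is handled directly: $\psi_S(0)=m(S)/g$ and $\psi(0)=\gamma$, so it is Theorem~\ref{Kaplan Mult}.

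I expect the main obstacle to be Step 1, specifically justifying the second-moment estimate near the discontinuities of $f_1$ at $\gamma$ and $2\gamma$. The window decomposition above is the way I intend to handle it. The point $\alpha=1/\sqrt5$ corresponds to $c=2\gamma$; this argument seems to cover it as well, but it is excluded in the statement, and nothing is lost by excluding it.
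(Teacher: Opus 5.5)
Your proof is correct, and it takes a genuinely different route from the paper's.

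\textbf{What the paper does.} It splits the proof into three regimes:
\begin{enumerate}
\item For $\alpha=0$ it uses Theorem~\ref{Kaplan Mult}.
\item For $\alpha>1/\sqrt5$ it uses Lemma~\ref{Lem: t>F+1-g} ($a_t(S)=t+g$ once $t\ge F(S)+1-g$) together with $F(S)\approx 2\gamma g$.
\item For $0<\alpha<1/\sqrt5$ it conditions on depth $2$ or $3$, on $m(S)$, on $F(S)-2m(S)$ and on $(A_1,A_2)$. By Proposition~\ref{Zhao F<2m} and Proposition~\ref{Prop: parameterize C(g,m,l)}, the semigroup then becomes a uniformly random fixed-size subset of an interval. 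The exact order-statistic formulas of Theorem~\ref{Thm: order statistic} give conditional means and variances of $a_t(S)$. These are averaged with the law of total expectation, and Chebyshev finishes the argument.
\end{enumerate}

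\textbf{What you do.} You prove a law of large numbers for the counting function $N_S(c)$ from the one- and two-point inclusion probabilities. You then invert via $a_t\le x\iff |S\cap[1,x]|\ge t$. Your second-moment estimate, the window bookkeeping, and both deviation bounds check out. You also correctly see that $\alpha=0$ needs Theorem~\ref{Kaplan Mult} separately, because there $h(\gamma-\epsilon)=0=\alpha$ leaves no room in the lower deviation.

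\textbf{What your route buys.} It is shorter and treats every $\alpha\in(0,1]$ at once, including $\alpha=1/\sqrt5$. It also makes precise the paper's remark that the slopes of $\psi$ are the reciprocals of the nonzero values of $f_1$: $\psi$ is exactly the inverse of $h(c)=\int_0^c f_1$.

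\textbf{What the paper's route buys.} Your argument rests on a much stronger black box, the asymptotic pairwise independence in Theorem 26 of \cite{kaplan2023expected}. The paper needs only concentration of $m(S)$ and $F(S)$ plus explicit parametrizations, and it yields explicit conditional error bounds. This is why its method transfers almost verbatim to the Frobenius ordering in Section~\ref{Sec: proof for frob}, using Backelin's and Singhal's results. There the paper quotes no analogue of the two-point inclusion theorem that your approach would require.
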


We show that Theorem~\ref{Thm: pointwise convergence} implies Theorem~\ref{Thm: Uniform conv psi_S}.

\begin{proof}[Proof of Theorem~\ref{Thm: Uniform conv psi_S} assuming Theorem~\ref{Thm: pointwise convergence}]
Consider $\epsilon>0$.
Choose $N$ such that $\frac{1}{N}\phi <\frac{\epsilon}{2}$. For each $i$ satisfying $0 \le i \le N$ let $\alpha_i=\frac{i}{N}$. Note that $\psi(\alpha_{i+1})-\psi(\alpha_i)<\frac{\epsilon}{2}$.
Now given any $\alpha\in [0,1]$, there is some $i$ for which $\alpha_i\leq \alpha\leq \alpha_{i+1}$. Note that
\begin{eqnarray*}
\psi_S(\alpha)-\psi(\alpha) & \leq \psi_S(\alpha_{i+1})-\psi(\alpha_i) & < \psi_S(\alpha_{i+1})-\psi(\alpha_{i+1}) +\frac{\epsilon}{2},\\
\psi(\alpha)-\psi_S(\alpha) & \leq \psi(\alpha_{i+1})-\psi_S(\alpha_i)  &< \psi(\alpha_{i})-\psi_S(\alpha_{i}) +\frac{\epsilon}{2}.
\end{eqnarray*}
From this we can conclude that
\[\sup_{0\leq \alpha\leq 1} |\psi_S(\alpha)-\psi(\alpha)| \leq \max_{0\leq i\leq N} |\psi_S(\alpha_i)- \psi(\alpha_i)| +\frac{\epsilon}{2}.\]
This means that if there is some $\alpha\in[0,1]$ for which $|\psi_S(\alpha)-\psi(\alpha)|>\epsilon$, then there is some $0\leq i\leq N$ for which $|\psi_S(\alpha_i)-\psi(\alpha_i)|>\frac{\epsilon}{2}$. This implies that
\[\P_g\Big[ \sup_{0\leq \alpha\leq 1}|\psi_S(\alpha) -\psi(\alpha)| >\epsilon \Big] \leq \sum_{i=0}^N \P_g\Big[ |\psi_S(\alpha_i) -\psi(\alpha_i)| >\frac{\epsilon}{2} \Big].\]

For each $0\leq i\leq N$, Theorem~\ref{Thm: pointwise convergence} says that
\[
\lim_{g\to\infty} \P_g\Big[ |\psi_S(\alpha_i) -\psi(\alpha_i)| >\frac{\epsilon}{2} \Big] = 0.
\]
The result follows.
\end{proof}

Note that Theorem~\ref{Thm: Uniform conv psi_S} implies pointwise convergence at $\alpha=\frac{1}{\sqrt{5}}$ as well, despite the fact that this was not covered by Theorem~\ref{Thm: pointwise convergence}.

\section{Pointwise convergence: The proof of Theorem \ref{Thm: pointwise convergence}}\label{Sec: pointwise}

The goal of this section is to prove Theorem~\ref{Thm: pointwise convergence}, that is, show that for each $\alpha\in [0,1]\setminus\{\frac{1}{\sqrt{5}}\}$, we have convergence in probability of $\psi_S(\alpha)$ to $\psi(\alpha)$.

We start with the cases where $\alpha=0$ and where $\frac{1}{\sqrt{5}}<\alpha\leq 1$, because these are easier to handle.

\begin{proposition}\label{Prop: pointwise alpha 0}
Let $\epsilon>0$.  We have
\[
\lim_{g\to\infty} \P_g[ |\psi_S(0) -\psi(0)|<\epsilon ] =1.
\]
\end{proposition}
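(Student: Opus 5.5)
This follows directly from Theorem~\ref{Kaplan Mult}. At $\alpha=0$ we have $t=1+\lfloor 0\cdot(g-1)\rfloor=1$. By definition this gives $\psi_S(0)=\frac{a_1(S)}{g}=\frac{m(S)}{g}$, while $\psi(0)=\gamma$. The plan is therefore simply to rewrite the event in the statement in terms of the multiplicity and then invoke the Kaplan--Ye result.

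Concretely, for any $S\in\SS_g$ we have
\[
|\psi_S(0)-\psi(0)|<\epsilon \iff \left|\frac{m(S)}{g}-\gamma\right|<\epsilon \iff |m(S)-\gamma g|<\epsilon g.
\]
Hence
\[
\P_g\big[|\psi_S(0)-\psi(0)|<\epsilon\big]=\P_g\big[|m(S)-\gamma g|<\epsilon g\big].
\]
By Theorem~\ref{Kaplan Mult}, applied with the same fixed $\epsilon>0$, the right-hand side tends to $1$ as $g\to\infty$. This gives the claim.

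There is no real obstacle here. The only thing to check is the bookkeeping of the definition of $\psi_S$ at the endpoint $\alpha=0$, namely that $t=1$ and that the normalization is by $g$ rather than $g-1$, so that the quantities match Theorem~\ref{Kaplan Mult} exactly. No $o(1)$ error terms arise.
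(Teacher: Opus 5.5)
Your proof is correct and is essentially the paper's argument. Since $\psi_S(0)=m(S)/g$ and $\psi(0)=\gamma$, the event is exactly $\{|m(S)-\gamma g|<\epsilon g\}$, and Theorem~\ref{Kaplan Mult} finishes the proof.
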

\begin{proof}
For any $S \in \SS_g$ we have $\psi_S(0) = \frac{a_{1}(S)}{g} = \frac{m(S)}{g}$.  Therefore, Theorem~\ref{Kaplan Mult} implies that
\[
\lim_{g\to\infty} \P_g[ |\psi_S(0) -\psi(0)|<\epsilon ]
=\lim_{g\to\infty} \P_g[ |m(S) -\gamma g|<\epsilon g ]=1. \qedhere
\]
\end{proof}

The case where $\frac{1}{\sqrt{5}}<\alpha\leq 1$ is similar.
\begin{lemma}\label{Lem: t>F+1-g}
If $t\geq F(S)+1-g(S)$, then $a_t(S)=t+g(S)$.    
\end{lemma}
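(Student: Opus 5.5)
Write $F = F(S)$ and $g = g(S)$. The plan is a counting argument: we determine exactly how many nonzero elements of $S$ lie below and above the Frobenius number.

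\emph{Step 1: count the elements below the Frobenius number.} The interval $[1, F]$ contains $F$ positive integers. Every gap of $S$ is at most $F$, so all $g$ gaps lie in this interval. Hence
\[
|S \cap [1,F]| = F - g .
\]
In other words, $a_1(S), \dots, a_{F-g}(S)$ are exactly the nonzero elements of $S$ smaller than $F$.

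\emph{Step 2: identify the elements above the Frobenius number.} Every integer greater than $F$ lies in $S$, and $F \notin S$. So the next element after $a_{F-g}(S)$ is $F+1$, giving $a_{F-g+1}(S) = F+1$. More generally, for each $j \ge 1$,
\[
a_{F-g+j}(S) = F + j .
\]

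\emph{Step 3: reindex.} Take $t \ge F + 1 - g$ and set $j = t - (F - g)$, so that $j \ge 1$. Step 2 then gives
\[
a_t(S) = F + j = F + t - F + g = t + g,
\]
which is the claimed formula.

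\emph{Degenerate case.} If $S = \N_0$, then $F = -1$ and $g = 0$, and the formula $a_t(S) = t$ holds trivially. Otherwise $F \ge 1$ and the argument above applies as written.

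I do not expect any real obstacle here. The only care needed is the off-by-one bookkeeping: confirming that exactly $F - g$ nonzero elements precede $F+1$, using both that $F \notin S$ and that every gap is at most $F$.
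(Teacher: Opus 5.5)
Your proof is correct and is essentially the paper's argument: the paper notes that $t+g(S)\ge F(S)+1$ forces $t+g(S)\in S$ and, since all $g(S)$ gaps lie below it, $|S\cap[1,t+g(S)]|=t$. This is the same gap count you perform with cutoff $F(S)$ followed by reindexing, and using $t+g(S)$ as the cutoff just shortens the bookkeeping and makes your separate treatment of $S=\N_0$ unnecessary.
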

\begin{proof}
Note that $t+g(S)\geq F(S)+1$, therefore $t+g(S)\in S$ and
\[|S\cap [1,t+g(S)]| =(t+g(S))- g(S)=t.\]
The result follows.
\end{proof}

\begin{proposition}\label{Prop: pointwise large alpha}
Suppose $\alpha$ satisfies $\frac{1}{\sqrt{5}}<\alpha\leq 1$.  Let $\epsilon>0$.  We have
\[
\lim_{g\to\infty} \P_g[ |\psi_S(\alpha) -\psi(\alpha)|<\epsilon ] =1.
\]
\end{proposition}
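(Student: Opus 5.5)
The plan is to combine Lemma~\ref{Lem: t>F+1-g} with Equation~\eqref{eqn:FG}, which says that $F(S)$ is typically close to $2\gamma g$. Fix $\alpha \in (\frac{1}{\sqrt{5}},1]$ and write $t = 1+\lfloor \alpha(g-1)\rfloor$. Then $\psi(\alpha)=1+\alpha$.

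\textbf{Step 1: check the threshold constant.} We have
\[
2\gamma - 1 = \frac{5+\sqrt{5}}{5}-1 = \frac{1}{\sqrt{5}}.
\]
So the condition $t \ge F(S)+1-g$ in Lemma~\ref{Lem: t>F+1-g} becomes, after dividing by $g$, roughly $\alpha \ge F(S)/g - 1 \approx 2\gamma - 1 = \frac{1}{\sqrt{5}}$.

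\textbf{Step 2: fix a margin.} Set $\delta = \alpha - \frac{1}{\sqrt{5}} > 0$. Consider the event
\[
E_g = \Big\{\, |F(S) - 2\gamma g| < \tfrac{\delta}{2} g \,\Big\}.
\]
By Equation~\eqref{eqn:FG}, $\P_g[E_g] \to 1$.

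\textbf{Step 3: apply the lemma on $E_g$.} On $E_g$ we have
\[
F(S)+1-g < \Big(2\gamma - 1 + \tfrac{\delta}{2}\Big) g + 1 = \Big(\alpha - \tfrac{\delta}{2}\Big) g + 1 .
\]
We also have $t \ge \alpha(g-1) = \alpha g - \alpha$. Hence $t \ge F(S)+1-g$ as soon as
\[
\tfrac{\delta}{2} g \ge 1 + \alpha,
\]
which holds for all large $g$. Lemma~\ref{Lem: t>F+1-g} then gives $a_t(S) = t + g$, so
\[
\psi_S(\alpha) = \frac{t+g}{g} = 1 + \frac{t}{g}.
\]

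\textbf{Step 4: compare with $\psi(\alpha)$.} Since $|t - \alpha g| \le 2$, we get
\[
|\psi_S(\alpha) - (1+\alpha)| \le \frac{2}{g},
\]
which is less than $\epsilon$ for large $g$. Therefore $\P_g\big[|\psi_S(\alpha)-\psi(\alpha)|<\epsilon\big] \ge \P_g[E_g] \to 1$.

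There is no real obstacle here. The only points needing care are that the margin $\delta$ be strictly positive, which is exactly why $\alpha = \frac{1}{\sqrt{5}}$ is excluded, and the bookkeeping of the floor in $t$ against the $+1$ in the lemma's condition. Both are absorbed by taking $g$ large.
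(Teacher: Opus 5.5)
Your proof is correct and follows the paper's argument: both combine Lemma~\ref{Lem: t>F+1-g} with \eqref{eqn:FG} to get $a_t(S)=t+g$, and hence $|\psi_S(\alpha)-(1+\alpha)|<2/g$, with probability tending to $1$. The only difference is cosmetic: you work on the two-sided event $|F(S)-2\gamma g|<\frac{\delta}{2}g$, while the paper uses the one-sided event $F(S)\le g(1+\alpha)-2$ directly.
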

\begin{proof}
Let $t=1+\lfloor\alpha (g-1)\rfloor$, so $\psi_S(\alpha)= \frac{a_{t}(S)}{g}$. If $F(S)\leq t+g-1$, then  Lemma~\ref{Lem: t>F+1-g} implies that $a_t(S) = t+g = 1+\lfloor\alpha (g-1)\rfloor + g$.  In this case,
\begin{align*}
|\psi_S(\alpha) -\psi(\alpha)|
&=\Big|\frac{a_t(S)}{g} - (1+\alpha)\Big|
= \Big|\frac{1+\lfloor\alpha (g-1)\rfloor +g}{g} - (1+\alpha)\Big|\\
&\leq \Big|\frac{1+\alpha (g-1) +g}{g} - (1+\alpha)\Big| +\frac{1}{g} 
<\frac{2}{g}.
\end{align*}

Therefore, for $g>\frac{2}{\epsilon}$, we see that
\[
\P_g[ |\psi_S(\alpha) -\psi(\alpha)|<\epsilon ] 
\ge \P_g[ F(S) \le  g+\lfloor\alpha (g-1)\rfloor]
\geq \P_g[ F(S) \le  g(1+\alpha) -2].
\]

Since $\alpha>\frac{1}{\sqrt{5}}$, we know that $1+\alpha>2\gamma$. The result from \eqref{eqn:FG} following Theorem~\ref{Kaplan Mult} implies that
\[
\lim_{g\to\infty} \P_g[ F(S)<  g(1+\alpha) -2] =1.
\]
The result follows. 
\end{proof}

For the case where $0<\alpha<\frac{1}{\sqrt{5}}$, we will show that 
\[
\lim_{g\to\infty} \E_g[\psi_S(\alpha)] =\psi(\alpha)\ \ \ \text{ and }\ \ \  \lim_{g\to\infty} \Var_g[\psi_S(\alpha)] =0.
\] 
As we explained at the end of Section \ref{sec:prob}, Chebyshev's inequality then implies the pointwise convergence result that we want.  For these computations we consider numerical semigroups of depth $2$ and numerical semigroups of depth $3$ separately. Since most numerical semigroups have depth $2$ or $3$, this is enough.

As seen in Proposition~\ref{Zhao F<2m} and Proposition~\ref{Prop: parameterize C(g,m,l)}, numerical semigroups of depth $2$ and $3$ are obtained from random subsets of  given size chosen from a given interval. We want to understand the statistics of the $k^{th}$ element of such a random subset. We will use the following result about order statistics.
\begin{theorem}\cite[Theorem 2]{o2022distribution}\label{Thm: order statistic}
Suppose a random subset of size $n$ is chosen from $1,\dots,N$. Order the elements of the subset as $X_{(1)}<X_{(2)}<\dots<X_{(n)}$.
Then we have
\begin{align*}
\E[X_{(k)}] &=\frac{N+1}{n+1}k,
&
\Var[X_{(k)}] =\frac{(N+1)(N-n)}{(n+1)^2 (n+2)} k (n+1-k).
\end{align*}
\end{theorem}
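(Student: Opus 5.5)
The plan is a direct computation with the exact distribution of $X_{(k)}$, reducing both moments to a single binomial convolution identity. Since the subset is uniform among the $\binom{N}{n}$ subsets of size $n$, the event $X_{(k)}=j$ means that exactly $k-1$ chosen elements lie in $\{1,\dots,j-1\}$ and $n-k$ lie in $\{j+1,\dots,N\}$. Hence
\[
\P[X_{(k)}=j]=\frac{\binom{j-1}{k-1}\binom{N-j}{n-k}}{\binom{N}{n}}.
\]
The key combinatorial input is the Vandermonde-type identity
\[
\sum_{j}\binom{j+r-1}{k+r-1}\binom{N-j}{n-k}=\binom{N+r}{n+r}\qquad (r\ge 0).
\]
I would prove it bijectively. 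Count $(n+r)$-subsets of $\{1,\dots,N+r\}$ according to the value $j+r$ of their $(k+r)$\textsuperscript{th} smallest element. Below that element there are $\binom{j+r-1}{k+r-1}$ choices. Above it there are $\binom{N-j}{n-k}$ choices, since $N+r-(j+r)=N-j$ elements remain and $(n+r)-(k+r)=n-k$ of them must be chosen. The case $r=0$ is just the statement that the probabilities sum to $1$.

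\textbf{First moment.} Use the absorption identity $j\binom{j-1}{k-1}=k\binom{j}{k}$. Applying the identity with $r=1$ gives
\[
\E[X_{(k)}]=\frac{k\binom{N+1}{n+1}}{\binom{N}{n}}=\frac{(N+1)k}{n+1}.
\]

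\textbf{Second moment.} Rather than computing $\E[X_{(k)}^2]$ directly, I would compute the rising factorial moment $\E[X_{(k)}(X_{(k)}+1)]$. This is cleaner because
\[
j(j+1)\binom{j-1}{k-1}=k(k+1)\binom{j+1}{k+1}.
\]
Applying the identity with $r=2$ then gives
\[
\E[X_{(k)}(X_{(k)}+1)]=\frac{k(k+1)\binom{N+2}{n+2}}{\binom{N}{n}}=\frac{k(k+1)(N+1)(N+2)}{(n+1)(n+2)}.
\]

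\textbf{Variance.} Write $a=\frac{N+1}{n+1}$. Then
\[
\Var[X_{(k)}]=\E[X_{(k)}(X_{(k)}+1)]-\E[X_{(k)}]-\E[X_{(k)}]^2=ka\left(\frac{(k+1)(N+2)}{n+2}-1-ka\right).
\]
It remains to put the bracket over the common denominator $(n+1)(n+2)$. Its numerator is
\[
(k+1)(N+2)(n+1)-(n+1)(n+2)-k(N+1)(n+2),
\]
and I expect this to factor as $(N-n)(n+1-k)$, which yields exactly the stated variance formula.

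This final algebraic simplification is the only step requiring care, and it is the one I would double-check. A quick sanity check: the numerator vanishes at $k=n+1$ and at $N=n$, which is consistent with the claimed factorization. As a fallback, one could instead use exchangeability of the $n+1$ spacings between consecutive elements of $\{0\}\cup\text{subset}\cup\{N+1\}$, but I would use the direct computation above.
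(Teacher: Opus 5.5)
Your proof is correct. The one step you left as an expectation does check out. Expanding,
\[
(k+1)(N+2)(n+1)-(n+1)(n+2)-k(N+1)(n+2)=Nn+N-kN+kn-n^2-n=(N-n)(n+1-k),
\]
so the bracket equals $\frac{(N-n)(n+1-k)}{(n+1)(n+2)}$. Multiplying by $ka$ gives exactly the stated variance.

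Your sanity check is nearly a proof on its own. The numerator is affine in $N$ and affine in $k$. If it vanishes on the whole line $N=n$ and on the whole line $k=n+1$, it must equal $c(N-n)(n+1-k)$, and comparing the coefficient of $kN$ gives $c=1$.

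The paper itself gives no proof of this statement. It quotes the result from the cited reference and uses it as a black box, applied to the uniformly random set $B$ in the parametrizations of depth-$2$ and depth-$3$ semigroups. So your argument supplies a self-contained derivation rather than an alternative to one in the text. Your three ingredients are all sound:
\begin{itemize}
\item the exact law $\P[X_{(k)}=j]=\binom{j-1}{k-1}\binom{N-j}{n-k}/\binom{N}{n}$;
\item the shifted Vandermonde identity, proved by classifying $(n+r)$-subsets of $\{1,\dots,N+r\}$ by their $(k+r)$-th smallest element;
\item the rising factorial moment $\E[X_{(k)}(X_{(k)}+1)]$, which lets the case $r=2$ apply verbatim.
\end{itemize}
The terms with $j\le 0$ that appear in the identity but not in the expectation vanish, so the bookkeeping is fine.

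The spacings argument you mention as a fallback gives a more structural explanation of the shape of the answer. Let $Y_1,\dots,Y_{n+1}$ be the gaps between consecutive elements of $\{0\}\cup\text{subset}\cup\{N+1\}$. They are exchangeable and sum to the constant $N+1$. This gives the mean at once, and it forces $\operatorname{Cov}(Y_i,Y_j)=-\Var(Y_1)/n$. Hence
\[
\Var[X_{(k)}]=\frac{k(n+1-k)}{n}\Var(Y_1),
\]
which explains the factor $k(n+1-k)$ and reduces everything to the single case $k=1$. That base case still needs a computation of the same kind as yours, so your direct route is no longer and is fully explicit.
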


\subsection{Depth 2, $0<\alpha<\frac{1}{\sqrt{5}}$}\label{Subsec: depth 2}

By Proposition~\ref{Zhao F<2m}, if $S\in \B(g,m)$, then $S=S_{m,B}$ for some subset $B\subseteq\{1,\dots,m-1\}$ of size $2m-g-2$.
Note that $a_{k+1}(S)$ corresponds to the $k^{th}$ smallest element of $B$. Therefore, Theorem~\ref{Thm: order statistic} directly implies the following.

\begin{corollary}\label{Cor: ak+1 depth 2}
For $1\leq k\leq 2m-g-2$, we have
\begin{eqnarray*}
\E_g[a_{k+1}(S)\mid S\in \B(g,m)] & = & m+\frac{mk}{2m-g-1},\\
\Var_g[a_{k+1}(S)\mid S\in \B(g,m)] & = & \frac{m(g-m+1) k (2m-g-1-k)}{(2m-g-1)^2 (2m-g)}.
\end{eqnarray*}
\end{corollary}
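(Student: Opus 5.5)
The corollary follows by applying Theorem~\ref{Thm: order statistic} to the uniform random subset $B$ coming from Proposition~\ref{Zhao F<2m}. Conditioning on $S\in\B(g,m)$ means $S$ is uniform on $\B(g,m)$, since $\P_g$ is uniform on $\SS_g$. Proposition~\ref{Zhao F<2m} gives a bijection between $\B(g,m)$ and subsets $B\subseteq\{1,\dots,m-1\}$ of size $n=2m-g-2$. So the conditional law of $B$ is that of a uniformly random $n$-subset of $\{1,\dots,N\}$ with $N=m-1$.

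Next we identify $a_{k+1}(S)$ in terms of $B$. Since $S=S_{m,B}=(m+B)\cup\{0,m,2m\rightarrow\}$, the nonzero elements of $S$ in increasing order are:
\begin{itemize}
\item first $m$;
\item then the elements $m+b$ for $b\in B$, in increasing order, all lying in $[m+1,2m-1]$;
\item then $2m,2m+1,\dots$.
\end{itemize}
Hence $a_1(S)=m$ and $a_{k+1}(S)=m+X_{(k)}$ for $1\le k\le n$, where $X_{(k)}$ is the $k$-th smallest element of $B$.

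We then substitute into Theorem~\ref{Thm: order statistic} with $N+1=m$, $n+1=2m-g-1$, $n+2=2m-g$ and $N-n=(m-1)-(2m-g-2)=g-m+1$. This gives
\[
\E[a_{k+1}(S)]=m+\frac{mk}{2m-g-1}.
\]
Adding the constant $m$ does not change the variance, so
\[
\Var[a_{k+1}(S)]=\frac{m(g-m+1)\,k\,(2m-g-1-k)}{(2m-g-1)^2(2m-g)},
\]
using $n+1-k=2m-g-1-k$.

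The only care needed is checking that elements of $m+B$ lie strictly between $m$ and $2m$, so the indexing shift by one is exact. The range $1\le k\le n$ also keeps every denominator positive, since $2m-g-2\ge1$ there.
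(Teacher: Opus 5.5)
Your proposal is correct and follows the paper's argument: identify $a_{k+1}(S)$ with $m$ plus the $k$th smallest element of the uniformly random $(2m-g-2)$-subset $B\subseteq\{1,\dots,m-1\}$ from Proposition~\ref{Zhao F<2m}, then substitute $N=m-1$ and $n=2m-g-2$ into Theorem~\ref{Thm: order statistic}. Your explicit check that $m+B\subseteq[m+1,2m-1]$ justifies the index shift that the paper leaves implicit.
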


Instead of keeping track of $\Var_g[\psi_S(\alpha)]$, from now on, we will keep track of $\E_g[\psi_S(\alpha)^2]$. Theorem~\ref{Kaplan Mult} implies that most numerical semigroups in $\SS_g$ have $m(S)$ close to $\gamma g$.  We consider the semigroups for which this holds.  We show that the expectation of $\psi_S(\alpha)$ is close to $\psi(\alpha)$ and that the expectation of $\psi_S(\alpha)^2$ is close to $\psi(\alpha)^2$.

\begin{lemma}\label{Lem: exp, var depth 2}
Assume $\epsilon_1< \frac{1}{6\sqrt{5}}$ and $g>\frac{1}{\epsilon_1}$.
Suppose $m$ and $\alpha$ satisfy $|m-\gamma g|<\epsilon_1 g$ and $0< \alpha \leq \frac{1}{\sqrt{5}} -4\epsilon_1$.
Then
\[
\Bigl|\E_g \big[\psi_S(\alpha)\mid S\in\mathcal B(g,m)\big]-(\gamma+\phi\alpha)\Bigr|
\leq (8\phi-1)\epsilon_1. \]
Moreover,
\[
\Bigl|\E_g \big[\psi_S(\alpha)^2\mid S\in\mathcal B(g,m)\big]-(\gamma+\phi\alpha)^2\Bigr|
\leq 68\epsilon_1. \]
\end{lemma}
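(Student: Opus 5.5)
The plan is to reduce everything to Corollary~\ref{Cor: ak+1 depth 2}. Write $t=1+\lfloor\alpha(g-1)\rfloor$ and $k=t-1=\lfloor \alpha(g-1)\rfloor$, so that $\psi_S(\alpha)=a_{k+1}(S)/g$. The first step is to check that $1\le k\le 2m-g-2$, so the corollary applies. We have $k\ge 1$ once $\alpha(g-1)\ge 1$; if $k=0$ instead, then $a_1=m$ and the bound must be checked directly. This is easy because $\alpha$ is then at most about $1/g<\epsilon_1$, so $|m/g-\gamma-\phi\alpha|\le \epsilon_1+\phi\epsilon_1$. For the upper bound on $k$, note that $2m-g-2>(2\gamma-1-2\epsilon_1)g-2=(\tfrac{1}{\sqrt5}-2\epsilon_1)g-2$, while $k\le \alpha g\le (\tfrac1{\sqrt5}-4\epsilon_1)g$. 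The gap $2\epsilon_1 g-2$ is positive because $g>1/\epsilon_1$ gives $\epsilon_1 g>1$. The hypothesis $\epsilon_1<\frac{1}{6\sqrt5}$ guarantees that the denominator $2m-g-1$ is at least a constant times $g$ (roughly $(\tfrac1{\sqrt5}-2\epsilon_1)g\ge \tfrac{2}{3\sqrt5}g$), which is needed for the error estimates below.

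For the expectation, the corollary gives
\[
\E_g[\psi_S(\alpha)\mid S\in\B(g,m)]=\frac{m}{g}+\frac{m}{g}\cdot\frac{k}{2m-g-1}.
\]
Set $\mu=m/g=\gamma+\delta$ with $|\delta|<\epsilon_1$, and write $k=\alpha g+\eta$ with $|\eta|\le 2$. Then $\frac{k}{2m-g-1}=\frac{\alpha+\eta/g}{2\mu-1-1/g}$. The target value is $\gamma+\phi\alpha$, and this matches because $\gamma/(2\gamma-1)=\phi$: indeed $2\gamma-1=1/\sqrt5$ and $\gamma\sqrt5=\phi$. So the deviation is $\delta$ plus the difference between $\frac{\mu(\alpha+\eta/g)}{2\mu-1-1/g}$ and $\frac{\gamma\alpha}{2\gamma-1}$. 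I would bound this by perturbing one variable at a time. The function $\mu\mapsto \mu/(2\mu-1)$ has derivative $-1/(2\mu-1)^2$, which is of size about $5$ near $\gamma$; this contributes roughly $5\alpha\epsilon_1\le\sqrt5\epsilon_1$. The $\eta/g$ and $1/g$ terms are each at most a constant times $1/g<\epsilon_1$. Collecting the constants should give something within $(8\phi-1)\epsilon_1$. The crude inequalities $1/g<\epsilon_1$, $\alpha<1/\sqrt5$, and $2\mu-1-1/g\ge \tfrac{1}{\sqrt5}-3\epsilon_1$ make every term a multiple of $\epsilon_1$.

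For the second moment, use $\E[X^2]=\Var[X]+\E[X]^2$ with $X=a_{k+1}/g$. From the corollary,
\[
\Var\le \frac{m(g-m+1)}{g^2}\cdot\frac{k(2m-g-1-k)}{(2m-g-1)^2(2m-g)}\le \frac{m(g-m+1)}{4g^2(2m-g)}.
\]
This is $O(1/g)$, and since $1/g<\epsilon_1$ it is a small constant times $\epsilon_1$. Then $|\E[X]^2-(\gamma+\phi\alpha)^2|\le |\E[X]-(\gamma+\phi\alpha)|\cdot(\E[X]+\gamma+\phi\alpha)$. The second factor is at most about $2\cdot 2+$small, since $\psi\le 2$ and $\E[X]\le 2$ (because $a_{k+1}\le 2m\le 2g+2$). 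This gives roughly $4(8\phi-1)\epsilon_1+O(\epsilon_1)\le 68\epsilon_1$; note that $4(8\phi-1)\approx 47.8$, so there is room.

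The main obstacle is purely bookkeeping: keeping explicit numerical constants through the perturbation of $\mu/(2\mu-1)$ so that they land under $(8\phi-1)$ and $68$. The delicate point is the lower bound on the denominator $2\mu-1-1/g$, which is where $\epsilon_1<\frac1{6\sqrt5}$ enters; I would fix that lower bound first and then bound each error term crudely against it.
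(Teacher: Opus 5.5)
Your overall route is the paper's: apply Corollary~\ref{Cor: ak+1 depth 2} with $k=\lfloor\alpha(g-1)\rfloor$, bound the explicit mean using $\frac1g<\epsilon_1$ and a lower bound on $D=2\mu-1-\frac1g$, and get the second moment from $\E[X^2]=\Var[X]+\E[X]^2$. Your range check, your separate treatment of $k=0$, and your variance bound $\frac{m(g-m+1)}{4g^2(2m-g)}$ are fine; that bound is well under $\epsilon_1$, much better than the paper's $20/g$. The gap is the step you defer, and it is the only nontrivial part of the first inequality: the constant $8\phi-1\approx 11.94$. With your split and the crude inequalities you list ($\mu<\gamma+\epsilon_1$, $\alpha<\frac1{\sqrt5}$, $|\eta|\le 2$, $D>\frac1{\sqrt5}-3\epsilon_1>\frac1{2\sqrt5}$, $2\mu-1>\frac2{3\sqrt5}$), the bound does not close. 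First, the floor term $\frac{\mu|\eta|}{gD}$ is only bounded by $4\sqrt5(\gamma+\epsilon_1)\epsilon_1\approx 7.1\,\epsilon_1$. This is because the floor error gets multiplied by $\mu/D$, crudely up to about $3.6$, rather than by $\phi$. Second, the separate $\frac1g$-term $\frac{\mu\alpha}{g(2\mu-1)D}$ carries two small denominators and is only bounded by $3\sqrt5(\gamma+\epsilon_1)\epsilon_1\approx 5.4\,\epsilon_1$. Third, add $\epsilon_1$ for $\delta$ and $\sqrt5\,\epsilon_1$ for the $\mu$-term. That last bound is correct, but it comes from the exact identity $\frac{\mu}{2\mu-1}-\phi=\frac{\sqrt5(\gamma-\mu)}{2\mu-1}$ together with $\alpha<2\mu-1$, not from the derivative at $\mu=\gamma$. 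The total is about $15.7\,\epsilon_1$. So ``collecting the constants should give something within $(8\phi-1)\epsilon_1$'' is false along the route you describe.

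The repair is the paper's grouping. It puts the $\mu$-error and the $\frac1g$-error into one numerator over one denominator, and it multiplies the floor error only by $\phi$. With $\kappa=k/g$,
\[
\frac{\mu\kappa}{D}-\phi\alpha=\kappa\Bigl(\frac{\mu}{D}-\phi\Bigr)+\phi(\kappa-\alpha),\qquad \mu-\phi D=\sqrt5(\gamma-\mu)+\frac{\phi}{g},
\]
using $2\phi-1=\sqrt5$ and $\phi=\sqrt5\gamma$. Hence $|\mu-\phi D|<(\sqrt5+\phi)\epsilon_1$. Combined with $D>\frac1{2\sqrt5}$, $\kappa<\frac1{\sqrt5}$ and $|\kappa-\alpha|<\frac2g<2\epsilon_1$, this gives exactly $\epsilon_1+2(\sqrt5+\phi)\epsilon_1+2\phi\epsilon_1=(8\phi-1)\epsilon_1$, since $2\sqrt5=4\phi-2$. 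Your own split can also be pushed to about $11\,\epsilon_1$, but only by coupling the estimates: use $\alpha\le\frac1{\sqrt5}-4\epsilon_1$ instead of $\alpha<\frac1{\sqrt5}$ and evaluate every term at the same $\epsilon_1$. That is the opposite of bounding each term crudely. With the first inequality in hand, your second-moment step goes through with room to spare, since $0.3\,\epsilon_1+4(8\phi-1)\epsilon_1<49\,\epsilon_1$. One small fix there: justify $\E[X]\le 2$ by $a_{k+1}\le a_g=2g$, because $a_{k+1}\le 2m\le 2g+2$ only gives $2+\frac2g$.
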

\begin{proof}
Set $k=\lfloor \alpha (g-1)\rfloor$, so $\psi_S(\alpha)= \frac{a_{k+1}(S)}{g}$.  By assumption, $2m-g \ge (2\gamma-1)g - 2\epsilon_1 g$.  Since $2\gamma-1 = \frac{1}{\sqrt{5}}$, the bound on $\alpha$ ensures that $0\leq k\leq 2m-g-2$. 
Corollary~\ref{Cor: ak+1 depth 2} implies that
\[
\E_g[a_{k+1}(S)\mid S\in \B(g,m)] = m+\frac{mk}{2m-g-1}.
\]
Write $\rho=m/g$ and $\kappa=k/g$.  By hypothesis $|\rho-\gamma|<\epsilon_1$ and $\alpha-\frac{2}{g} <\kappa<\alpha$.
Dividing by $g$, we see that
\[
\E_g[\psi_S(\alpha)\mid S\in \B(g,m)]
= \rho+\frac{\rho\kappa}{2\rho-1-\frac{1}{g}}.
\]
Now, notice that
\begin{align*}
\Bigl|\E_g \big[\psi_S(\alpha)\mid S\in\mathcal B(g,m)\big]-(\gamma+\phi\alpha)\Bigr|
&= \Bigl| \rho+\frac{\rho \kappa}{2\rho-1-\frac{1}{g}} -(\gamma+\phi\alpha)\Bigr|\\
&\leq |\rho-\gamma| + \Bigl| \frac{\rho \kappa}{2\rho-1-\frac{1}{g}} -\phi\alpha\Bigr|\\
&<\epsilon_1 + \kappa\Bigl| \frac{\rho}{2\rho-1-\frac{1}{g}} -\phi\Bigr| 
+ \phi | \kappa - \alpha |.
\end{align*}
Now, since $\frac{1}{g}<\epsilon_1$ and $\phi=\sqrt{5}\gamma$, we have
\[
\left|\rho -\phi \left(2\rho-1-\frac{1}{g}\right)\right| < |\phi -\rho (2\phi-1)| + 
\frac{\phi}{g} 
< \sqrt{5}\left|\gamma -\rho \right|  +\phi\epsilon_1 < (\sqrt{5}+\phi)\epsilon_1.
\]
Since $\frac{1}{g}<\epsilon_1 < \frac{1}{6\sqrt{5}}$ and $2\gamma-1=\frac{1}{\sqrt{5}}$, we have
\[
2\rho-1-\frac{1}{g} > 2\gamma-1 -2\epsilon_1 -\frac{1}{g} >\frac{1}{\sqrt{5}} -3\epsilon_1 > \frac{1}{2\sqrt{5}}.
\]
Therefore, we see that
\begin{align*}
\epsilon_1 + \kappa\Bigl| \frac{\rho}{2\rho-1-\frac{1}{g}} -\phi\Bigr|  + \phi | \kappa - \alpha |
&< \epsilon_1 + \frac{1}{\sqrt{5}} \frac{(\sqrt{5}+\phi)\epsilon_1}{\frac{1}{2\sqrt{5}}} + \phi 2\epsilon_1
=(8\phi-1) \epsilon_1.
\end{align*}

Next, we consider the variance.
Again from Corollary~\ref{Cor: ak+1 depth 2}, we know that
\[\Var_g[a_{k+1}(S)\mid S\in \B(g,m)]
=\frac{m(g-m+1)k(2m-g-1-k)}{(2m-g-1)^2(2m-g)}.\]
Divide by $g^2$ to pass to $\psi_S(\alpha)= \frac{a_{k+1}(S)}{g}$:
\[
\Var_g[\psi_S(\alpha)\mid S\in \B(g,m)]
=\frac{\rho(1-\rho+\tfrac1g) \kappa (2\rho-1-\kappa-\tfrac1g)}
{g (2\rho-1-\tfrac1g)^2(2\rho-1)}.
\]
Therefore,
\begin{align*}
\Var_g[\psi_S(\alpha)\mid S\in \B(g,m)]
&<\frac{1}{g} \frac{\rho \kappa (2\rho-1)}
{ \big(\frac{1}{2\sqrt{5}}\big)^3}
<\frac{1}{g} \left(2\sqrt{5}\right)^3 (\gamma+\epsilon_1) \alpha (2\gamma+2\epsilon_1-1)\\
&<\frac{1}{g} \left(2\sqrt{5}\right)^3 \left(\gamma+\frac{1}{6\sqrt{5}}\right) \frac{1}{\sqrt{5}} \left(2\gamma+\frac{1}{3\sqrt{5}}-1\right)
<\frac{20}{g} < 20\epsilon_1.
\end{align*}
Now,
\begin{align*}
& \ \Bigl|\E_g \big[\psi_S(\alpha)^2\mid S\in\mathcal B(g,m)\big]-(\gamma+\phi\alpha)^2\Bigr|\\
=&\   \Bigl| \Var_g[\psi_S(\alpha)\mid S\in \B(g,m)] +\E_g \big[\psi_S(\alpha)\mid S\in\mathcal B(g,m)\big]^2 -(\gamma+\phi\alpha)^2\Bigr|\\
\leq &\ \Bigl| \Var_g[\psi_S(\alpha)\mid S\in \B(g,m)] \Bigr|\\
& +\Bigl|\E_g \big[\psi_S(\alpha)\mid S\in\mathcal B(g,m)\big] -(\gamma+\phi\alpha)\Bigr|
\cdot \Bigl|\E_g \big[\psi_S(\alpha)\mid S\in\mathcal B(g,m)\big] +(\gamma+\phi\alpha)\Bigr|\\
<& \  20\epsilon_1 + (8\phi-1)\epsilon_1 (2+2)
<68\epsilon_1.\qedhere
\end{align*}
\end{proof}

We use the law of total expectation to combine our results for the expectations taken over semigroups $S\in \B(g,m)$ to derive a result about the expectation taken over the whole collection of $S \in \B(g)$.

\begin{proposition}\label{Prop: small alpha depth 2}
Suppose that $0<\alpha<\frac{1}{\sqrt{5}}$.
Then we have
\[\lim_{g\to\infty} \E_g \big[\psi_S(\alpha)\mid S\in\mathcal B(g)\big] = \gamma+\phi\alpha. \]
Moreover,
\[
\lim_{g\to\infty} \E_g \big[\psi_S(\alpha)^2\mid S\in\mathcal B(g)\big] = (\gamma+\phi\alpha)^2. 
\]
\end{proposition}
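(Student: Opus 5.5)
We use the law of total expectation over the multiplicity. For any $\epsilon_1$ we have
\[
\E_g\big[\psi_S(\alpha)\mid S\in\B(g)\big]=\sum_{m}\E_g\big[\psi_S(\alpha)\mid S\in\B(g,m)\big]\,\P_g\big[m(S)=m\mid S\in\B(g)\big],
\]
and likewise for $\psi_S(\alpha)^2$. We split this sum into the ``good'' multiplicities with $|m-\gamma g|<\epsilon_1 g$ and the remaining ``bad'' ones.

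\textbf{Good terms.} Fix $\alpha\in(0,\frac{1}{\sqrt5})$ and let $\epsilon>0$. Choose $\epsilon_1$ so small that $\epsilon_1<\frac{1}{6\sqrt5}$, $\alpha\le \frac{1}{\sqrt5}-4\epsilon_1$, and $68\epsilon_1<\epsilon/2$ (hence also $(8\phi-1)\epsilon_1<\epsilon/2$). For $g>1/\epsilon_1$, Lemma~\ref{Lem: exp, var depth 2} applies to every good $m$. It puts each conditional expectation within $\epsilon/2$ of $\gamma+\phi\alpha$, and each conditional second moment within $\epsilon/2$ of $(\gamma+\phi\alpha)^2$.

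\textbf{Bad terms.} Since $\psi_S(\alpha)\in[0,2]$ always, each bad term contributes at most $2$ (respectively $4$) times its conditional probability. So it is enough to show
\[
\P_g\big[|m(S)-\gamma g|\ge\epsilon_1 g \,\big|\, S\in\B(g)\big]\to 0 .
\]
This conditional probability is at most $\P_g[|m(S)-\gamma g|\ge \epsilon_1 g]/\P_g[\B(g)]$. The numerator tends to $0$ by Theorem~\ref{Kaplan Mult}. For the denominator we need $\P_g[\B(g)]$ bounded below by a positive constant, and this is the one genuinely non-local input and the step I expect to be the obstacle.

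\textbf{Lower bound on $\P_g[\B(g)]$.} We bound $|\B(g)|$ directly using Proposition~\ref{Zhao F<2m}:
\[
|\B(g)|=\sum_{m}\binom{m-1}{2m-g-2}.
\]
A standard identity for sums of this shape (the number of ways of choosing $k$ from $n-k$) gives Fibonacci numbers, so $|\B(g)|\asymp \phi^g$. Alternatively, the single term with $m=\lfloor\gamma g\rfloor$ is already $\gg \phi^g/\sqrt g$, and one sums over a window of width $\sqrt g$ using Stirling's formula. Comparing with Theorem~\ref{thm_zhai}, $n_g\sim S\phi^g$, gives $\liminf_g \P_g[\B(g)]>0$.

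If the Fibonacci-identity route is awkward, there is a fallback: Zhai's proof, or Theorem~\ref{F 2m} combined with the near-equality $F\approx 2m$, is known to give a positive proportion with $F<2m$. Direct counting is cleaner, though, and I would try it first.

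\textbf{Conclusion.} Putting the two parts together, both conditional moments are within $\epsilon/2+o(1)$ of their targets. Since $\epsilon$ was arbitrary, the limits follow.
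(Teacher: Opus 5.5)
Your proposal is correct and follows the paper's proof: total expectation over $m(S)$, Lemma~\ref{Lem: exp, var depth 2} on the window $|m-\gamma g|<\epsilon_1 g$, the trivial bound $\psi_S(\alpha)\in[0,2]$ off that window, and Theorem~\ref{Kaplan Mult}. The one difference is that you make explicit a step the paper leaves implicit: $\P_g[\B(g)]$ must stay bounded below for Theorem~\ref{Kaplan Mult} to transfer to the conditional measure on $\B(g)$. Your direct count handles this cleanly, since substituting $j=g-m+1$ gives $|\B(g)|=\sum_j\binom{g-j}{j}$, which is the $(g+1)$st Fibonacci number, so $\P_g[\B(g)]\to\phi/(\sqrt5\,S)>0$ with $S$ the constant of Theorem~\ref{thm_zhai}; your fallback via Theorem~\ref{F 2m} alone would not give a positive proportion with $F<2m$, but it is not needed.
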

\begin{proof}
Pick $0<\epsilon_1 < \min\left(\frac{1}{6\sqrt{5}}, \frac{1}{4} (\frac{1}{\sqrt{5}}-\alpha ) \right)$ and $g>\frac{1}{\epsilon_1}$. Recall that $0\leq \psi_S(\alpha)\leq 2$.
By the law of total expectation and Lemma~\ref{Lem: exp, var depth 2}, we have
\begin{align*}
& \ \Big|\E_g \big[\psi_S(\alpha)\mid S\in\mathcal B(g)\big] -(\gamma+\phi\alpha) \Big|\\
\leq &\ \sum_{m: |m-\gamma g|<\epsilon_1 g} \Big|\E_g \big[\psi_S(\alpha)\mid S\in\mathcal B(g,m)\big] -(\gamma+\phi\alpha) \Big| \P[m(S)=m\mid S\in\mathcal B(g)]\\
& +\sum_{m: |m-\gamma g|\ge \epsilon_1 g} \Big|\E_g \big[\psi_S(\alpha)\mid S\in\mathcal B(g,m)\big] -(\gamma+\phi\alpha) \Big| \P[m(S)=m\mid S\in\mathcal B(g)]\\
\leq &\  (8\phi-1)\epsilon_1 \P[|m(S)-\gamma g|<\epsilon_1 g\mid S\in\mathcal B(g)]
+ 2 \P[|m(S)-\gamma g| \ge \epsilon_1 g\mid S\in\mathcal B(g)].
\end{align*}
Theorem~\ref{Kaplan Mult} implies that
\[
\limsup_{g\to\infty} \Big|\E_g \big[\psi_S(\alpha)\mid S\in\mathcal B(g)\big] -(\gamma+\phi\alpha) \Big| \leq (8\phi-1)\epsilon_1.
\]
Since $\epsilon_1$ was arbitrary, we see that
\[
\lim_{g\to\infty} \E_g \big[\psi_S(\alpha)\mid S\in\mathcal B(g)\big] =(\gamma+\phi\alpha).
\]

Similarly, we also see that
\begin{align*}
& \ \Big|\E_g \big[\psi_S(\alpha)^2\mid S\in\mathcal B(g)\big] -(\gamma+\phi\alpha)^2 \Big|\\
\leq &\ \sum_{m: |m-\gamma g|<\epsilon_1 g} \Big|\E_g \big[\psi_S(\alpha)^2\mid S\in\mathcal B(g,m)\big] -(\gamma+\phi\alpha)^2 \Big| \P[m(S)=m\mid S\in\mathcal B(g)]\\
& +\sum_{m: |m-\gamma g| \ge \epsilon_1 g} \Big|\E_g \big[\psi_S(\alpha)^2\mid S\in\mathcal B(g,m)\big] -(\gamma+\phi\alpha)^2 \Big| \P[m(S)=m\mid S\in\mathcal B(g)]\\
\leq &\ 68\epsilon_1 \P[|m(S)-\gamma g|<\epsilon_1 g\mid S\in\mathcal B(g)]
+ 4 \P[|m(S)-\gamma g|\ge \epsilon_1 g\mid S\in\mathcal B(g)].
\end{align*}
Theorem~\ref{Kaplan Mult} implies that
\[
\limsup_{g\to\infty} \Big|\E_g \big[\psi_S(\alpha)^2\mid S\in\mathcal B(g)\big] -(\gamma+\phi\alpha)^2 \Big| \leq 68\epsilon_1.
\]
Since $\epsilon_1$ was arbitrary, we see that
\[
\lim_{g\to\infty} \E_g \big[\psi_S(\alpha)^2\mid S\in\mathcal B(g)\big] =(\gamma+\phi\alpha)^2.\qedhere
\]
\end{proof}

\subsection{Depth 3, $0<\alpha<\frac{1}{\sqrt{5}}$}

This subsection is analogous to Subsection~\ref{Subsec: depth 2} and deals with semigroups $S \in \cC(g)$. We start by applying Theorem~\ref{Thm: order statistic} to the setting of Proposition~\ref{Prop: parameterize C(g,m,l)}.  

\begin{lemma}\label{Lem: aks depth 3}
Consider $(A_1,A_2)\in \Ima(\chi_{g,m,l})$. For $|A_1|<k\leq 2m-g+l-|A_2|$, we have
\[
\E_g[a_{k}(S)\mid S\in \cC(g,m,l), \chi_{g,m,l}(S)=(A_1,A_2)] =m+l+\frac{(m-l)(k-|A_1|)}{2m-g+l-|A_1|-|A_2|+1},
\]
and 
\begin{align*}
&\Var_g[a_{k}(S)\mid S\in \cC(g,m,l), \chi_{g,m,l}(S)=(A_1,A_2)] \\
&=\frac{(m-l)(g-m-2l-1+|A_1|+|A_2|) (k-|A_1|) (2m-g+l-|A_2|+1-k)}{(2m-g+l-|A_1|-|A_2|+1)^2 (2m-g+l-|A_1|-|A_2|+2)}.  
\end{align*}
\end{lemma}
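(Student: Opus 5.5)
The plan is to reduce Lemma~\ref{Lem: aks depth 3} to Theorem~\ref{Thm: order statistic}, using the parametrization in Proposition~\ref{Prop: parameterize C(g,m,l)}. Conditioned on $S\in\cC(g,m,l)$ and $\chi_{g,m,l}(S)=(A_1,A_2)$, the uniform measure $\P_g$ restricts to the uniform measure on the fiber. By Proposition~\ref{Prop: parameterize C(g,m,l)}, the fiber is in bijection with subsets $B\subseteq\{1,\dots,N\}$ of size $n$, where
\[
N=m-l-1,\qquad n=2m-g+l-|A_1|-|A_2|.
\]
So $B$ is a uniformly random $n$-subset of $\{1,\dots,N\}$. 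It remains to identify $a_k(S_B)$ with an order statistic of $B$.

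First I will describe the elements of $S_B$ in increasing order. These are the elements of $A_1\subseteq[m,m+l]$, then those of $m+l+B\subseteq[m+l+1,2m-1]$, then those of $A_2\subseteq[2m,2m+l]$, then everything from $2m+l+1$ onward. The three finite blocks lie in disjoint, increasing intervals. Hence for $|A_1|<k\le |A_1|+n=2m-g+l-|A_2|$, the $k$-th smallest nonzero element is
\[
a_k(S_B)=m+l+X_{(k-|A_1|)},
\]
where $X_{(j)}$ is the $j$-th smallest element of $B$. The index range matches exactly the hypothesis of the lemma.

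Next I apply Theorem~\ref{Thm: order statistic} with $j=k-|A_1|$, using $N+1=m-l$ and $n+1=2m-g+l-|A_1|-|A_2|+1$. This gives
\[
\E=m+l+\frac{(m-l)(k-|A_1|)}{n+1}.
\]
For the variance, the constant shift drops out, and we have
\[
N-n=(m-l-1)-(2m-g+l-|A_1|-|A_2|)=g-m-2l-1+|A_1|+|A_2|,
\]
\[
n+1-j=2m-g+l-|A_2|+1-k.
\]
Substituting these into the formula of Theorem~\ref{Thm: order statistic} reproduces the stated variance.

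There is no real obstacle. The only points needing care are two bookkeeping checks: that the blocks are ordered so that the index shift by $|A_1|$ is correct, and that the arithmetic for $N-n$ and $n+1-j$ comes out as stated. I will also note that conditioning on the fiber gives the uniform distribution on $B$, because the correspondence in Proposition~\ref{Prop: parameterize C(g,m,l)} is a bijection.
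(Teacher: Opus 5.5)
Your proposal is correct and follows the paper's own proof. Both arguments parametrize the fiber by a uniform $n$-subset $B\subseteq\{1,\dots,m-l-1\}$ via Proposition~\ref{Prop: parameterize C(g,m,l)}, identify $a_k(S)$ with $m+l$ plus the $(k-|A_1|)$-th smallest element of $B$, and then apply Theorem~\ref{Thm: order statistic}; your explicit checks of the block ordering and of $N-n$ and $n+1-j$ fill in the bookkeeping that the paper leaves implicit.
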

\begin{proof}
By Proposition~\ref{Prop: parameterize C(g,m,l)}, we know that numerical semigroups with $\chi_{g,m,l}(S) = (A_1,A_2)$ are in bijection with  $B\subseteq \{1,2,\dots,m-l-1\}$ of size $2m-g+l-|A_1|-|A_2|$. Also note that $a_k(S)$ corresponds to the $(k-|A_1|)^{th}$ element of $B$. The result follows from Theorem~\ref{Thm: order statistic}.
\end{proof}

Since most numerical semigroups have $|m(S)-\gamma g|<\epsilon g $ and $|F(S)-2m(S)|<\epsilon g $, we focus on these semigroups.

\begin{lemma}\label{Lem: depth 3 fix A1 A2}
Assume $\epsilon_1< \frac{1}{6\sqrt{5}}$ and $g>\frac{1}{\epsilon_1}$.
Suppose $g,m,l$ satisfy $|m-\gamma g|<\epsilon_1 g$ and $l<\epsilon_1 g$.  Also suppose that $(A_1,A_2)\in \Ima(\chi_{g,m,l})$ and $\alpha$ satisfies $3\epsilon_1< \alpha \leq \frac{1}{\sqrt{5}} -2\epsilon_1$.
Then
\[
\Bigl|\E_g \big[\psi_S(\alpha)\mid S\in \cC(g,m,l), \chi_{g,m,l}(S)=(A_1,A_2) \big]-(\gamma+\phi\alpha)\Bigr|
\leq 25\epsilon_1. 
\]
Moreover,
\[
\Bigl|\E_g \big[\psi_S(\alpha)^2\mid S\in \cC(g,m,l), \chi_{g,m,l}(S)=(A_1,A_2) \big]-(\gamma+\phi\alpha)^2\Bigr|
< 109\epsilon_1. 
\]
\end{lemma}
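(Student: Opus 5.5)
The argument mirrors Lemma~\ref{Lem: exp, var depth 2}, with Lemma~\ref{Lem: aks depth 3} in place of Corollary~\ref{Cor: ak+1 depth 2}. Set $t=1+\lfloor\alpha(g-1)\rfloor$, so that $\psi_S(\alpha)=a_t(S)/g$. Write $a_1=|A_1|$, $a_2=|A_2|$, and $n=2m-g+l-a_1-a_2$, which is the size of the random subset $B\subseteq\{1,\dots,m-l-1\}$.

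The first step is to confirm that $t$ lies in the range $a_1<t\le 2m-g+l-a_2$ where Lemma~\ref{Lem: aks depth 3} applies. Since $A_1\subseteq[m,m+l]$ and $A_2\subseteq[2m,2m+l]$, both $a_1$ and $a_2$ are at most $l+1<\epsilon_1 g+1$. The lower bound $t\ge 1+\alpha(g-1)-1>3\epsilon_1 g-\alpha$ exceeds $a_1$ for $g>1/\epsilon_1$. For the upper bound we have $2m-g+l-a_2>(2\gamma-1)g-2\epsilon_1 g-1$, and this is at least $\alpha g+1\ge t$ because $\alpha\le\frac1{\sqrt5}-2\epsilon_1$. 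The margins here are tight, so I would check them carefully and use the strict inequalities $|m-\gamma g|<\epsilon_1 g$ and $l<\epsilon_1 g$. If the margin turns out to be insufficient, I would shave it off using $g>1/\epsilon_1$.

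The second step handles the expectation. Lemma~\ref{Lem: aks depth 3} gives
\[\frac{\E[a_t]}{g}=\frac{m+l}{g}+\frac{(m-l)(t-a_1)}{g\,(n+1)}.\]
Put $\rho=m/g$, $\lambda=l/g<\epsilon_1$ and $\kappa=(t-a_1)/g$. Then $|\kappa-\alpha|<\epsilon_1+2/g<3\epsilon_1$, and $(n+1)/g=2\rho-1+\delta$ with $|\delta|<3\epsilon_1+2/g$, roughly $5\epsilon_1$. Since $n$ counts elements of $\{1,\dots,m-l-1\}$ and depth $3$ forces $l<m$, we also have $n+1>0$. Exactly as in the depth~2 lemma, I would bound
\[|\rho+\lambda-\gamma|+\kappa\left|\frac{\rho-\lambda}{2\rho-1+\delta}-\phi\right|+\phi|\kappa-\alpha|.\]
Using $\phi=\sqrt5\gamma$, the numerator $|\rho-\lambda-\phi(2\rho-1+\delta)|$ is $O(\epsilon_1)$ with an explicit constant. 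The denominator stays above $\frac{1}{\sqrt5}-5\epsilon_1$; since $\epsilon_1<\frac1{6\sqrt5}$ this is a fixed positive constant, though weaker than $\frac1{2\sqrt5}$, so I need to track it. Collecting the constants should give $25\epsilon_1$.

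The third step handles the second moment. The variance formula of Lemma~\ref{Lem: aks depth 3}, divided by $g^2$, has a numerator of order $g^4$ and a denominator $(n+1)^2(n+2)$ of order $g^3$, since $n+1\ge c g$ with $c$ a fixed constant. The conditional variance is therefore at most $C/g<C\epsilon_1$. Bounding each factor by constants, as in the depth~2 case, should give $C$ near $20$. Then
\[|\E[\psi^2]-(\gamma+\phi\alpha)^2|\le \Var+|\E\psi-(\gamma+\phi\alpha)|\cdot 4\le C\epsilon_1+100\epsilon_1,\]
and the stated bound $109\epsilon_1$ requires $C\le 9$. The main obstacle is the bookkeeping of explicit constants, particularly the lower bound on $2\rho-1+\delta$ and the variance constant, which must fit within the stated $25\epsilon_1$ and $109\epsilon_1$. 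My first attempt at the variance would bound the factor $(g-m-2l-1+a_1+a_2)/g$ by $1-\gamma+3\epsilon_1$ and $\kappa(2\rho-1-\kappa)$ by $\frac14(2\rho-1)^2$, rather than by cruder estimates.
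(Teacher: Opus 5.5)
Your plan follows the paper's route: Lemma~\ref{Lem: aks depth 3}, rescaling, a bound on the conditional mean, a bound on the conditional variance, and $\E[\psi^2]=\Var[\psi]+\E[\psi]^2$. However, it has a genuine gap at exactly the two points you flag as tight, and neither can be closed the way you suggest. You only use $|A_1|,|A_2|\le l+1$, and with that bound your upper range check fails. From $2m-g+l-|A_2|>(\frac{1}{\sqrt5}-2\epsilon_1)g-1$ you get only $\ge \alpha g-1$, not $\ge \alpha g+1\ge t$. There is also no slack to ``shave off'' using $g>1/\epsilon_1$, because the hypothesis $\alpha\le\frac{1}{\sqrt5}-2\epsilon_1$ exactly matches the window $|m-\gamma g|<\epsilon_1 g$. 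The missing input is structural:
\begin{itemize}
\item $F(S)=2m+l\notin S$;
\item $m+l\notin S$, since $m+(m+l)=F(S)$.
\end{itemize}
Hence $A_1\subseteq[m,m+l-1]$ and $A_2\subseteq[2m,2m+l-1]$, so $|A_1|,|A_2|\le l$. Then $2m-g+l-|A_2|\ge 2m-g$, which is an integer exceeding $(\frac{1}{\sqrt5}-2\epsilon_1)g\ge\alpha g>\lfloor\alpha(g-1)\rfloor$, and so is $\ge t$.

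The same fact is what makes the constant $25$ reachable. With $a,b\le\lambda$ one gets $D=2\rho-1+\lambda-a-b+\frac1g>\frac{1}{\sqrt5}-3\epsilon_1\ge\frac{1}{2\sqrt5}$. The paper's count is then $(2+14\phi)\epsilon_1\approx 24.65\,\epsilon_1$, dominated by $\frac{6\phi\epsilon_1}{D}\cdot\frac{1}{\sqrt5}\le 12\phi\epsilon_1$, so there is essentially no room. Your denominator bound $\frac{1}{\sqrt5}-5\epsilon_1$ only guarantees $D>\frac{1}{6\sqrt5}$, which triples that term to about $36\phi\epsilon_1\approx 58\epsilon_1$. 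Even the correct consequence of $|A_i|\le l+1$, namely $D>\frac{1}{\sqrt5}-4\epsilon_1\ge\frac{1}{3\sqrt5}$, makes that term about $18\phi\epsilon_1\approx 29\epsilon_1$ on its own. Your $\delta$ bookkeeping is also inconsistent: in fact $|\delta|\le\lambda+\frac1g<2\epsilon_1$, whereas $|\delta|\approx 5\epsilon_1$ combined with $2\rho-1>\frac{1}{\sqrt5}-2\epsilon_1$ would give only $\frac{1}{\sqrt5}-7\epsilon_1$, which can be negative. So ``collecting the constants should give $25\epsilon_1$'' is not justified as proposed.

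Your variance refinement, on the other hand, is correct and sharper than the paper's. With $j=t-|A_1|$ and $N=n+1$, the factor $j(N-j)$ in Lemma~\ref{Lem: aks depth 3} is at most $N^2/4$; use $N$ here, not $(2\rho-1)g$. This cancels $D^2$ and bounds the conditional variance of $\psi_S(\alpha)$ by $\frac{(\rho-\lambda)(1-\rho+1/g)}{4gD}$, which is $<\frac{1}{2g}$ once $D\ge\frac{1}{2\sqrt5}$, instead of the paper's $\frac{9}{g}$. Once the mean bound $25\epsilon_1$ is secured via $|A_i|\le l$, the second-moment bound $<109\epsilon_1$ follows.
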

\begin{proof}
Let $t=1+\lfloor\alpha(g-1)\rfloor$, so $\psi_S(\alpha)= \frac{a_{t}(S)}{g}$. The condition $3\epsilon_1< \alpha \leq \frac{1}{\sqrt{5}} -2\epsilon_1$, along with $|m-\gamma g|<\epsilon_1 g$ and $l<\epsilon_1 g$ ensure that $|A_1|<t\leq 2m-g+l-|A_2|$.

Write the scaled quantities
\[
\rho=\frac{m}{g},\qquad \lambda=\frac{l}{g},\qquad
a=\frac{|A_1|}{g},\qquad b=\frac{|A_2|}{g},\qquad
\kappa=\frac{t}{g},
\]
and set
\[D=2\rho-1+\lambda-a-b+\frac{1}{g}.\]

First, since $|m-\gamma g|\leq \epsilon_1 g$, $l\leq \epsilon_1 g$, $|A_1|,|A_2|\leq l$, and $\frac{1}{g} \leq \epsilon_1$, we have
\[
|\rho-\gamma|\leq \epsilon_1,\qquad
0 < \lambda\leq \epsilon_1,\qquad
0\leq a,b\leq \lambda.
\]
Also $|\kappa-\alpha|\leq \frac{1}{g}\leq \epsilon_1$ since $t=1+\lfloor\alpha(g-1)\rfloor$.

We apply the result of Lemma~\ref{Lem: aks depth 3} not to the function $a_k(S)$ but to the function $\frac{a_k(S)}{g}$.  Dividing by this constant $g$ divides the expected value by $g$ and the variance by $g^2$.  We see that
\[
\E_g\left[\psi_S(\alpha)\mid S\in \cC(g,m,l),\ \chi_{g,m,l}(S)=(A_1,A_2)\right]
= \rho+\lambda+\frac{(\rho-\lambda)(\kappa-a)}{D}.
\]
Hence
\begin{align*}
& \Bigl| \E_g\left[\psi_S(\alpha)\mid S\in \cC(g,m,l),\ \chi_{g,m,l}(S)=(A_1,A_2)\right]-(\gamma+\phi\alpha)\Bigr| \\
\leq & |\rho-\gamma|+\lambda+\phi|\kappa-\alpha-a|
   +\Bigl|\frac{\rho-\lambda}{D}-\phi\Bigr|\cdot|\kappa-a|.
\end{align*}
We bound each of the terms in this expression.
\begin{enumerate}
    \item We know that $|\rho-\gamma|\leq \epsilon_1$ and $\lambda\leq \epsilon_1$.
    \item We know that $|\kappa-\alpha|\leq \epsilon_1$ and $a\leq \epsilon_1$, so
    \[
    \phi|\kappa-\alpha-a|\leq 2\phi\epsilon_1.
    \]
    \item We now consider the term $D$.  Since $a+b \le 2\lambda$ and $2\rho \ge 2\gamma - 2\epsilon_1$, we have
    \[
    D\geq (2\rho-1)+\lambda-2\lambda\geq (2\gamma-1)-2\epsilon_1-\epsilon_1
    = \frac1{\sqrt5}-3\epsilon_1 \ \ge\ \frac1{2\sqrt5},
    \]
    because $\epsilon_1<\frac{1}{6\sqrt5}$. Hence $D^{-1}\leq 2\sqrt5$.
    \item Next,
    \begin{align*}
    \Bigl|\frac{\rho-\lambda}{D}-\phi\Bigr|
    &=\frac{\big|(\rho-\lambda)-\phi(2\rho-1+\lambda-a-b+\tfrac1g)\big|}{D}\\
    &=\frac{\big|(1-2\phi)(\rho-\gamma)-(1+\phi)\lambda+\phi(a+b)-\tfrac{\phi}{g}\big|}{D}\\
    &\leq \frac{(2\phi-1)\epsilon_1 +(1+\phi)\epsilon_1 +2\phi\epsilon_1+\phi\epsilon_1}{D}
    = \frac{6\phi\epsilon_1}{D}.
    \end{align*}

Applying the bound $D^{-1}\leq 2\sqrt5$ from (3), we see that
    \[
    \Bigl|\frac{\rho-\lambda}{D}-\phi\Bigr|\leq 12\phi\sqrt{5}\epsilon_1.
    \]
    Moreover,
    \[
    |\kappa-a|\leq \kappa +a 
    \leq \left(\frac{1}{g}+\alpha\right) +\epsilon_1 
    \leq \left(\epsilon_1 +\frac1{\sqrt5}-2\epsilon_1\right)+\epsilon_1
    = \frac1{\sqrt5}.
    \]
    This implies
    \[    
    \Bigl|\frac{\rho-\lambda}{D}-\phi\Bigr|\cdot|\kappa-a|
    \leq 12\phi\epsilon_1.
    \]
\end{enumerate}
Collecting (1)-(4), we see that
\begin{align*}
& \Bigl| \E_g\left[\psi_S(\alpha)\mid S\in \cC(g,m,l),\ \chi_{g,m,l}(S)=(A_1,A_2)\right]-(\gamma+\phi\alpha)\Bigr| \\
 \leq & \epsilon_1+\epsilon_1+2\phi\epsilon_1+12\phi \epsilon_1
= (2+14\phi)\epsilon_1 <25\epsilon_1.
\end{align*}
This proves the claimed bound for the expectation.

Dividing the formula for the variance given in Lemma~\ref{Lem: aks depth 3} by $g^2$ gives
\begin{align*}
&\Var_g[\psi_S(\alpha)\mid S\in\cC(g,m,l),\ \chi_{g,m,l}(S)=(A_1,A_2)]\\
&=\frac{(\rho-\lambda)\bigl(1-\rho-2\lambda-\tfrac1g+a+b\bigr)
(\kappa-a)\bigl(2\rho-1+\lambda-b+\tfrac1g-\kappa\bigr)}
{gD^2(D+\tfrac1g)}.    
\end{align*}
We bound the numerator and denominator using the hypotheses.

Since $D\geq \frac{1}{2\sqrt{5}}$, we know that
\[D^2(D+\tfrac1g)\ \ge\ \Big(\frac1{2\sqrt5}\Big)^3.\]

We bound each factor of the numerator from above:
\[
\rho-\lambda\ \le\ \gamma+\epsilon_1,\qquad
1-\rho-2\lambda-\tfrac1g+a+b\ \le\ 1-\rho\ \le\ 1-\gamma+\epsilon_1,
\]
\[
\kappa-a\ \le\ \alpha+\tfrac1g\ \le\ \frac1{\sqrt5},\qquad
2\rho-1+\lambda-b+\tfrac1g-\kappa\ \le\ 2\rho-1+\lambda+\tfrac1g
\ \le\ \frac1{\sqrt5}+4\epsilon_1.
\]
Therefore
\begin{align*}
&\Var_g[\psi_S(\alpha)\mid S\in\cC(g,m,l),\ \chi_{g,m,l}(S)=(A_1,A_2)]\\
\le &\ \frac{1}{g}(2\sqrt5)^3
(\gamma+\epsilon_1)(1-\gamma+\epsilon_1)\Big(\frac1{\sqrt5}\Big)
\Big(\frac1{\sqrt5}+4\epsilon_1\Big).
\end{align*}
We evaluate the right-hand side at the largest $\epsilon_1$ allowed by our hypotheses, 
$\epsilon_1=\frac{1}{6\sqrt5}$, and see that
\[
\Var_g[\psi_S(\alpha)\mid S\in\cC(g,m,l),\ \chi_{g,m,l}(S)=(A_1,A_2)]<\frac{9}{g}< 9\epsilon_1.
\]

Now,
\begin{align*}
&\  \Bigl|\E_g \big[\psi_S(\alpha)^2\mid S\in\cC(g,m,l),\ \chi_{g,m,l}(S)=(A_1,A_2)\big]-(\gamma+\phi\alpha)^2\Bigr|\\
= &\  \Bigl| \Var_g[\psi_S(\alpha)\mid \cC(g,m,l),\ \chi_{g,m,l}] +\E_g \big[\psi_S(\alpha)\mid \cC(g,m,l),\ \chi_{g,m,l}\big]^2 -(\gamma+\phi\alpha)^2\Bigr|\\
\leq &\ \Bigl| \Var_g[\psi_S(\alpha)\mid \cC(g,m,l),\ \chi_{g,m,l}] \Bigr|\\
 &+\Bigl|\E_g \big[\psi_S(\alpha)\mid \cC(g,m,l),\ \chi_{g,m,l}\big] -(\gamma+\phi\alpha)\Bigr|
\cdot \Bigl|\E_g \big[\psi_S(\alpha)\mid \cC(g,m,l),\ \chi_{g,m,l}\big] +(\gamma+\phi\alpha)\Bigr|\\
< &\ 9\epsilon_1 + 25\epsilon_1 (2+2)
=109\epsilon_1.\qedhere
\end{align*}
\end{proof}

We use the law of total expectation and average over different $(A_1,A_2)$ in the image of $\chi_{g,m,l}$ in order to obtain the expectations conditional on $S\in\cC(g,m,l)$.

\begin{corollary}\label{Cor: depth 3 fix gml}
Assume $\epsilon_1< \frac{1}{6\sqrt{5}}$ and $g>\frac{1}{\epsilon_1}$.
Suppose we are given $g,m$ and $l$ that satisfy $|m-\gamma g|<\epsilon_1 g$ and $l<\epsilon_1 g$, and that $\alpha$ satisfies $3\epsilon_1< \alpha \leq \frac{1}{\sqrt{5}} -2\epsilon_1$.
Then
\[
\Bigl|\E_g \big[\psi_S(\alpha)\mid S\in \cC(g,m,l) \big]-(\gamma+\phi\alpha)\Bigr|
\leq 25\epsilon_1. 
\]
Moreover,
\[
\Bigl|\E_g \big[\psi_S(\alpha)^2\mid S\in \cC(g,m,l) \big]-(\gamma+\phi\alpha)^2\Bigr|
\leq 109\epsilon_1. 
\]
\end{corollary}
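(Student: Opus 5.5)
The plan is to average the bounds of Lemma~\ref{Lem: depth 3 fix A1 A2} over the fibers of $\chi_{g,m,l}$ using the law of total expectation. The set $\cC(g,m,l)$ is the disjoint union, over all pairs $(A_1,A_2)\in\Ima(\chi_{g,m,l})$, of the preimages $\chi_{g,m,l}^{-1}(A_1,A_2)$. This gives
\[
\E_g\big[\psi_S(\alpha)\mid S\in\cC(g,m,l)\big]=\sum_{(A_1,A_2)}\E_g\big[\psi_S(\alpha)\mid S\in\cC(g,m,l),\ \chi_{g,m,l}(S)=(A_1,A_2)\big]\,p_{A_1,A_2},
\]
where $p_{A_1,A_2}=\P_g[\chi_{g,m,l}(S)=(A_1,A_2)\mid S\in\cC(g,m,l)]$. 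These weights are nonnegative and sum to $1$, and only pairs in the image of $\chi_{g,m,l}$ have positive weight.

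Next I subtract $\gamma+\phi\alpha$ from both sides, writing it as $\sum p_{A_1,A_2}(\gamma+\phi\alpha)$, and apply the triangle inequality. This gives
\[
\Bigl|\E_g[\psi_S(\alpha)\mid S\in\cC(g,m,l)]-(\gamma+\phi\alpha)\Bigr|\le \sum_{(A_1,A_2)} p_{A_1,A_2}\,\Bigl|\E_g[\psi_S(\alpha)\mid \cdots,(A_1,A_2)]-(\gamma+\phi\alpha)\Bigr|.
\]
The hypotheses of the corollary on $\epsilon_1$, $g$, $m$, $l$ and $\alpha$ are exactly those of Lemma~\ref{Lem: depth 3 fix A1 A2}, and every pair with positive weight lies in $\Ima(\chi_{g,m,l})$. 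So each summand is at most $25\epsilon_1$, and the convex combination is at most $25\epsilon_1$.

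The second moment is handled the same way. I decompose $\E_g[\psi_S(\alpha)^2\mid S\in\cC(g,m,l)]$ over the same fibers with the same weights $p_{A_1,A_2}$. Each fiberwise deviation from $(\gamma+\phi\alpha)^2$ is less than $109\epsilon_1$ by the lemma, so the averaged deviation is at most $109\epsilon_1$.

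There is no genuine obstacle here. The only points needing care are that the partition is over the image of $\chi_{g,m,l}$, so the lemma applies to every fiber that actually occurs, and that $\cC(g,m,l)$ must be nonempty for the conditional expectation to make sense. If $\cC(g,m,l)$ is empty, the statement is vacuous or by convention irrelevant, since such terms receive zero weight in later averaging.
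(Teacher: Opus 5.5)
Your proposal is correct and is essentially the paper's proof. You decompose $\cC(g,m,l)$ into the fibers of $\chi_{g,m,l}$, apply the law of total expectation, and use Lemma~\ref{Lem: depth 3 fix A1 A2} on each fiber, so that a convex combination of terms each within $25\epsilon_1$ (resp.\ $109\epsilon_1$) of the target stays within that bound. Your explicit triangle-inequality step simply spells out what the paper summarizes as ``averaging over $\chi_{g,m,l}$ preserves this bound.''
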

\begin{proof}
By the law of total expectation, we can average over the $(A_1,A_2)$ in the image of $\chi_{g,m,l}$, that is
\[
\E_g\big[\psi_S(\alpha)\mid S\in \cC(g,m,l)\big]
=\E_g\big[\E_g\big[\psi_S(\alpha)\mid S\in \cC(g,m,l),\chi_{g,m,l}\big]\big].
\]

From Lemma~\ref{Lem: depth 3 fix A1 A2}, we have
\[
\Bigl|\E_g \big[\psi_S(\alpha)\mid S\in \cC(g,m,l),\chi_{g,m,l}(S)=(A_1,A_2)\big]
-(\gamma+\phi\alpha)\Bigr|\leq 25\epsilon_1
\]
for every $(A_1,A_2)$ in the image. Averaging over $\chi_{g,m,l}$ preserves
this bound, proving the first claim. The proof of the second claim is identical.
\end{proof}

We again use the law of total expectation, this time averaging over different $m,l$ in order to obtain the expectations conditional on $S\in\cC(g)$.

\begin{proposition}\label{Prop: small alpha depth 3}
Suppose that $0<\alpha<\frac{1}{\sqrt{5}}$.
Then we have
\[
\lim_{g\to\infty} \E_g \big[\psi_S(\alpha)\mid S\in \cC(g)\big] = \gamma+\phi\alpha. 
\]
Moreover,
\[
\lim_{g\to\infty} \E_g \big[\psi_S(\alpha)^2\mid S\in \cC(g)\big] = (\gamma+\phi\alpha)^2. 
\]
\end{proposition}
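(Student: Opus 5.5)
The plan is to follow the proof of Proposition~\ref{Prop: small alpha depth 2}, now averaging the bounds of Corollary~\ref{Cor: depth 3 fix gml} over the pairs $(m,l)$ with $l=F(S)-2m(S)$. Fix $0<\alpha<\frac{1}{\sqrt{5}}$ and choose
\[
0<\epsilon_1<\min\Big(\frac{1}{6\sqrt{5}},\ \frac{\alpha}{3},\ \frac{1}{2}\Big(\frac{1}{\sqrt{5}}-\alpha\Big)\Big).
\]
This choice gives $3\epsilon_1<\alpha\leq\frac{1}{\sqrt{5}}-2\epsilon_1$, so the hypotheses of Corollary~\ref{Cor: depth 3 fix gml} on $\alpha$ hold. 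Take $g>\frac{1}{\epsilon_1}$. Note that here $\alpha>0$ is essential, which is why the case $\alpha=0$ was treated separately.

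By the law of total expectation,
\[
\E_g[\psi_S(\alpha)\mid S\in\cC(g)]=\sum_{m,l}\E_g[\psi_S(\alpha)\mid S\in\cC(g,m,l)]\,\P_g[S\in\cC(g,m,l)\mid S\in\cC(g)].
\]
Split the sum into \emph{good} pairs, with $|m-\gamma g|<\epsilon_1 g$ and $0<l<\epsilon_1 g$, and \emph{bad} pairs, which are all the others.
\begin{itemize}
\item On good pairs, Corollary~\ref{Cor: depth 3 fix gml} bounds the deviation from $\gamma+\phi\alpha$ by $25\epsilon_1$.
\item On bad pairs, use the trivial bound $|\psi_S(\alpha)-(\gamma+\phi\alpha)|\leq 2$, since both quantities lie in $[0,2]$.
\end{itemize}
This gives
\[
\Big|\E_g[\psi_S(\alpha)\mid S\in\cC(g)]-(\gamma+\phi\alpha)\Big|\leq 25\epsilon_1+2\,\P_g\big[|m(S)-\gamma g|\geq\epsilon_1 g\ \text{or}\ F(S)-2m(S)\geq\epsilon_1 g \,\big|\, S\in\cC(g)\big].
\]
For the second moment the argument is identical, with constants $109\epsilon_1$ and $4$.

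The main obstacle is that the bad event has small probability \emph{conditional} on $S\in\cC(g)$, whereas Theorems~\ref{Kaplan Mult} and \ref{thm_KY_F2m} only give unconditional bounds. To handle this, write
\[
\P_g[E\mid\cC(g)]\leq\frac{\P_g[E]}{\P_g[\cC(g)]},
\]
and then show that $\P_g[\cC(g)]$ is bounded below by a positive constant for large $g$.

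For the lower bound on $\P_g[\cC(g)]$, argue as follows.
\begin{itemize}
\item By Theorem~\ref{F 2m}, $|F(S)-2m(S)|\leq M$ with probability at least $1-\epsilon$.
\item I expect a positive limiting proportion of semigroups to have $F(S)-2m(S)\in(0,M]$, which places them in $\cC(g)$. One route is to compare counts directly. Using Proposition~\ref{Prop: parameterize C(g,m,l)} with $l=1$ and $A_1=\{m\}$, $A_2=\{2m\}$, we get $|\cC(g,m,1)|\geq\binom{m-2}{2m-g-1}$. This is comparable to $|\B(g,m)|=\binom{m-1}{2m-g-2}$ up to a bounded factor for $m$ near $\gamma g$.
\item Since $\B(g)\cup\cC(g)$ has probability tending to $1$, this forces $\P_g[\cC(g)]\geq c>0$ for large $g$.
\end{itemize}

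With this lower bound in hand, Theorems~\ref{Kaplan Mult} and \ref{thm_KY_F2m} show that the bad conditional probability tends to $0$. Hence the $\limsup$ of each deviation is at most $25\epsilon_1$, respectively $109\epsilon_1$. Letting $\epsilon_1\to 0$ yields both limits.

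If that comparison turns out to be delicate, there is a fallback. Combine the $\B$ and $\cC$ contributions directly, since only $\E_g[\psi_S(\alpha)\mathbb{1}_{\cC(g)}]$ and the unconditional probabilities are needed for the final Chebyshev argument. This avoids the division by $\P_g[\cC(g)]$ entirely.
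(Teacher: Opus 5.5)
Your proposal is correct and follows the paper's proof essentially step for step. The shared steps are the same choice of $\epsilon_1$, the decomposition of $\cC(g)$ by $(m,l)$, Corollary~\ref{Cor: depth 3 fix gml} on the good pairs, the trivial bound $2$ (resp.\ $4$) on the bad pairs, and letting $\epsilon_1\to 0$. The one place you go further is the conditional step. The paper simply asserts $\P_g[G_{\epsilon_1}\mid S\in\cC(g)]\to 1$ ``by Theorem~\ref{Kaplan Mult}'', whereas you rightly note that this also needs Theorem~\ref{thm_KY_F2m} for the bound on $l$ and a positive lower bound on $\P_g[\cC(g)]$. Your comparison of $|\cC(g,m,1)|\ge\binom{m-2}{2m-g-1}$ with $|\B(g,m)|=\binom{m-1}{2m-g-2}$ supplies that lower bound, since the ratio tends to $\sqrt{5}-2$ for $m\approx\gamma g$.
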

\begin{proof}
Choose
\[
0<\epsilon_1<\min\left(\frac{1}{6\sqrt5},\ \frac{\alpha}{3},\ \frac{\frac{1}{\sqrt{5}}-\alpha}{2}\right),
\]
and take $g>1/\epsilon_1$. Consider the event
\[
G_{\epsilon_1}=\big\{|m(S)-\gamma g|<\epsilon_1 g\big\}\cap\big\{F(S)-2m(S)<\epsilon_1 g\big\}.
\]
By Theorem~\ref{Kaplan Mult}, we know that
$\lim_{g\to\infty} \P_g[G_{\epsilon_1}\mid S\in\cC(g)]=1$.

Decompose by $(m,l)$ inside $\cC(g)$:
\[
\E_g\big[\psi_S(\alpha)\mid S\in\cC(g)\big]
=\sum_{m,l}\E_g\big[\psi_S(\alpha)\mid S\in\cC(g,m,l)\big]
\P\big(S\in\cC(g,m,l)\mid S\in\cC(g)\big).
\]
For $(m,l)$ corresponding to $G_{\epsilon_1}$, Corollary~\ref{Cor: depth 3 fix gml} gives
\[
\Bigl|\E_g\big[\psi_S(\alpha)\mid S\in\cC(g,m,l)\big]-(\gamma+\phi\alpha)\Bigr|
\leq 25\epsilon_1.
\]
For $(m,l)$ corresponding to $G_{\epsilon_1}^c$, we use the trivial bound
\[
\big|\E_g[\psi_S(\alpha)\mid S\in\cC(g,m,l)]-(\gamma+\phi\alpha)\big|\leq 2,
\]
which holds because $\psi_S(\alpha)\in[0,2]$.
Therefore
\begin{align*}
\Big|\E_g[\psi_S(\alpha)\mid S\in\cC(g)]-(\gamma+\phi\alpha)\Big|
&\leq 25\epsilon_1\ \P(G_{\epsilon_1}\mid S\in\cC(g))
+2\P(G_{\epsilon_1}^c\mid S\in\cC(g)).
\end{align*}
Taking $\limsup_{g\to\infty}$ yields
\[\limsup_{g\to\infty}\Big|\E_g[\psi_S(\alpha)\mid S\in\cC(g)]-(\gamma+\phi\alpha)\Big|
\leq 25\epsilon_1.\]
Since $\epsilon_1$ can be arbitrarily small, we conclude
\[\lim_{g\to\infty}\E_g[\psi_S(\alpha)\mid S\in\cC(g)]=\gamma+\phi\alpha.\]

By a similar argument, we see that
\begin{align*}
\Big|\E_g[\psi_S(\alpha)^2\mid S\in\cC(g)]-(\gamma+\phi\alpha)^2\Big|
&\leq 109\epsilon_1\ \P(G_{\epsilon_1}\mid S\in\cC(g))
+4\P(G_{\epsilon_1}^c\mid S\in \cC(g)).
\end{align*}
Taking $\limsup_{g\to\infty}$ yields
\[
\limsup_{g\to\infty}\Big|\E_g[\psi_S(\alpha)^2\mid S\in\cC(g)]-(\gamma+\phi\alpha)^2\Big|
\leq 109\epsilon_1.
\]
Since $\epsilon_1$ can be arbitrarily small, we conclude
\[
\lim_{g\to\infty}\E_g[\psi_S(\alpha)^2\mid S\in\cC(g)]=(\gamma+\phi\alpha)^2.\qedhere
\]
\end{proof}

\subsection{Concluding $0<\alpha<\frac{1}{\sqrt{5}}$}

Next, we combine Proposition~\ref{Prop: small alpha depth 2} and Proposition~\ref{Prop: small alpha depth 3} to get the unconditional expectations.

\begin{proposition}\label{Prop: small alpha}
Suppose that $0<\alpha<\frac{1}{\sqrt{5}}$.
Then we have
\[\lim_{g\to\infty} \E_g \big[\psi_S(\alpha)\big] = \gamma+\phi\alpha. \]
Moreover,
\[
\lim_{g\to\infty} \E_g \big[\psi_S(\alpha)^2\big] = (\gamma+\phi\alpha)^2. 
\]
\end{proposition}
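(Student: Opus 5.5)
We combine the two conditional results by the law of total expectation over the partition of $\SS_g$ into $\B(g)$, $\cC(g)$, and the remainder
\[
\mathcal{R}(g)=\SS_g\setminus(\B(g)\cup\cC(g)).
\]
The set $\mathcal{R}(g)$ consists of semigroups with $F(S)\ge 3m(S)$, with $F(S)=2m(S)$ (impossible, since $2m\in S$), or with $F(S)=3m(S)$ (also impossible). Thus, up to the trivial exclusions, $\mathcal R(g)$ is exactly the set of semigroups with $F(S)>3m(S)$.

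Writing $X=\psi_S(\alpha)$, we have
\[
\E_g[X]=\E_g[X\mid \B(g)]\,\P_g[\B(g)]+\E_g[X\mid \cC(g)]\,\P_g[\cC(g)]+\E_g[X\mathbb{1}_{\mathcal R(g)}].
\]
The same decomposition holds for $X^2$.

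The key step is to show that $\P_g[\mathcal R(g)]\to 0$. This follows from Theorem~\ref{thm_KY_F2m} together with Theorem~\ref{Kaplan Mult}. Pick $\epsilon=\gamma/2$. Asymptotically almost surely we have $|F(S)-2m(S)|<\epsilon g$ and $m(S)>(\gamma-\epsilon)g=\epsilon g$. On this event,
\[
F(S)<2m(S)+\epsilon g<3m(S),
\]
so $S\notin\mathcal R(g)$. Alternatively, Theorem~\ref{F 2m} gives the same conclusion directly.

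One subtlety arises if $\P_g[\B(g)]$ or $\P_g[\cC(g)]$ tends to zero along some subsequence. This causes no difficulty: the conditional expectations are bounded (by $2$ and $4$ respectively), and Propositions~\ref{Prop: small alpha depth 2} and~\ref{Prop: small alpha depth 3} give limits along all $g$. I should check that $\B(g)$ and $\cC(g)$ are nonempty for large $g$, so that the conditionals are defined. This is clear, since for example $m=g+1$ gives an element of $\B(g)$. If one of them were empty for some $g$, its term is simply absent.

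Now write $\mu=\gamma+\phi\alpha$. Then
\begin{align*}
|\E_g[X]-\mu| \le{}& |\E_g[X\mid\B(g)]-\mu|+|\E_g[X\mid\cC(g)]-\mu|\\
&+(2+\mu)\,\P_g[\mathcal R(g)],
\end{align*}
using that $\P_g[\B(g)]+\P_g[\cC(g)]\le 1$ and $0\le X\le 2$. Each term tends to $0$ by Proposition~\ref{Prop: small alpha depth 2}, Proposition~\ref{Prop: small alpha depth 3}, and the step above.

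The identical argument with $X^2\in[0,4]$ and $\mu^2$ in place of $\mu$ gives the second claim. The only step requiring any real input is bounding $\P_g[\mathcal R(g)]$, and it is immediate from the cited results.
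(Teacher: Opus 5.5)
Your proof is correct and is essentially the paper's argument. Both use the law of total expectation over $\B(g)$, $\cC(g)$ and the remainder, invoke Propositions~\ref{Prop: small alpha depth 2} and~\ref{Prop: small alpha depth 3}, and bound the remainder's contribution by $2\P_g[\mathcal R(g)]$ (respectively $4\P_g[\mathcal R(g)]$). Your version is slightly more careful in two places the paper leaves implicit: you derive $\P_g[\mathcal R(g)]\to 0$ explicitly from Theorems~\ref{thm_KY_F2m} and~\ref{Kaplan Mult}, and your triangle-inequality bound avoids needing $\lim_{g\to\infty}\P_g[\B(g)]$ and $\lim_{g\to\infty}\P_g[\cC(g)]$ to exist separately.
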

\begin{proof}
Notice that
\begin{align*}
& \lim_{g\to\infty} \E_g \big[\psi_S(\alpha)\big]\\ 
= &\ \lim_{g\to\infty} \P_g[S\in \B(g)] \E_g \big[\psi_S(\alpha) \mid S\in \B(g)\big]
+\P_g[S\in \cC(g)] \E_g \big[\psi_S(\alpha) \mid S\in \cC(g)\big]\\
& +\P_g[S\notin \B(g)\cup\cC(g)] \E_g \big[\psi_S(\alpha) \mid S\notin \B(g)\cup\cC(g)\big].
\end{align*}
For the last term, note that
\[
\limsup_{g\to\infty}\P_g[S\notin \B(g)\cup\cC(g)] \E_g \big[\psi_S(\alpha) \mid S\notin \B(g)\cup\cC(g)\big]
\leq \lim_{g\to\infty}\P_g[S\notin \B(g)\cup\cC(g)] 2 =0.
\]
Therefore,
\[
\lim_{g\to\infty} \E_g \big[\psi_S(\alpha)\big]
= (\gamma+\phi\alpha ) \left(\lim_{g\to\infty} \P_g[S\in \B(g)] 
+\P_g[S\in \cC(g)] \right)
=(\gamma+\phi\alpha ).
\]

Similarly, we have
\begin{align*}
&\lim_{g\to\infty} \E_g \big[\psi_S(\alpha)^2\big]\\ 
=&\ \lim_{g\to\infty} \P_g[S\in \B(g)] \E_g \big[\psi_S(\alpha)^2 \mid S\in \B(g)\big]
+\P_g[S\in \cC(g)] \E_g \big[\psi_S(\alpha)^2 \mid S\in \cC(g)\big]\\
&+\P_g[S\notin \B(g)\cup\cC(g)] \E_g \big[\psi_S(\alpha)^2 \mid S\notin \B(g)\cup\cC(g)\big].
\end{align*}
For the last term, note that
\[\limsup_{g\to\infty}\P_g[S\notin \B(g)\cup\cC(g)] \E_g \big[\psi_S(\alpha)^2 \mid S\notin \B(g)\cup\cC(g)\big]
\leq \lim_{g\to\infty}\P_g[S\notin \B(g)\cup\cC(g)] 4 =0.\]
Therefore,
\[
\lim_{g\to\infty} \E_g \big[\psi_S(\alpha)^2\big]
= (\gamma+\phi\alpha )^2 \left(\lim_{g\to\infty} \P_g[S\in \B(g)] 
+\P_g[S\in \cC(g)] \right)
=(\gamma+\phi\alpha )^2.\qedhere
\]
\end{proof}

We can combine these expectation results to show that $\psi_S(\alpha)$ is concentrating around~$\psi(\alpha)$.

\begin{corollary}\label{Cor: small alpha exp concentration}
Suppose that $0<\alpha<\frac{1}{\sqrt{5}}$.
Then we have
\[\lim_{g\to\infty} \E_g \big[ \big(\psi_S(\alpha) -(\gamma+\phi\alpha) \big)^2\big] = 0. \]
\end{corollary}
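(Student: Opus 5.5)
The plan is to expand the square and apply linearity of expectation, so that Proposition~\ref{Prop: small alpha} does all the work. Write $c=\gamma+\phi\alpha$, which is a constant independent of $S$ and $g$. For every $S\in\SS_g$ we have the identity
\[
\big(\psi_S(\alpha)-c\big)^2=\psi_S(\alpha)^2-2c\,\psi_S(\alpha)+c^2 .
\]
Taking expectations with respect to $\P_g$ gives
\[
\E_g\big[(\psi_S(\alpha)-c)^2\big]=\E_g\big[\psi_S(\alpha)^2\big]-2c\,\E_g\big[\psi_S(\alpha)\big]+c^2 .
\]
All quantities are finite, since $0\le\psi_S(\alpha)\le 2$ and $\SS_g$ is a finite set.

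Next, let $g\to\infty$. Proposition~\ref{Prop: small alpha} gives $\E_g[\psi_S(\alpha)]\to c$ and $\E_g[\psi_S(\alpha)^2]\to c^2$. The right-hand side is a fixed linear combination of these two sequences, so it converges to
\[
c^2-2c\cdot c+c^2=0,
\]
which proves the corollary.

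There is no real obstacle here: all the analytic content sits in Proposition~\ref{Prop: small alpha}. The one thing to check is that $c$ is deterministic, so that the cross term factors as $c\,\E_g[\psi_S(\alpha)]$. The payoff comes afterwards. Since $\E_g[(\psi_S(\alpha)-c)^2]\to 0$, Markov's inequality applied to $(\psi_S(\alpha)-c)^2$ (equivalently, Chebyshev's inequality as discussed in Section~\ref{sec:prob}) gives $\P_g[|\psi_S(\alpha)-c|\ge\epsilon]\le\epsilon^{-2}\,\E_g[(\psi_S(\alpha)-c)^2]\to 0$. This settles the range $0<\alpha<\frac{1}{\sqrt5}$. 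Together with Propositions~\ref{Prop: pointwise alpha 0} and~\ref{Prop: pointwise large alpha}, it proves Theorem~\ref{Thm: pointwise convergence}.
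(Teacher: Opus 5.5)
Your proposal is correct and follows essentially the same route as the paper: expand the square, use linearity of expectation with the deterministic constant $\gamma+\phi\alpha$, and plug in the two limits from Proposition~\ref{Prop: small alpha}. Your closing remark about Markov/Chebyshev also matches how the paper then derives Corollary~\ref{Cor: pointwise small alpha}.
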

\begin{proof}
Note that
\begin{align*}
\lim_{g\to\infty} \E_g \big[ \big(\psi_S(\alpha) -(\gamma+\phi\alpha) \big)^2\big]
&=\lim_{g\to\infty} \E_g \big[ \psi_S(\alpha)^2\big]
-2(\gamma+\phi\alpha)\lim_{g\to\infty} \E_g \big[ \psi_S(\alpha) \big]
+ (\gamma+\phi\alpha)^2\\
&=(\gamma+\phi\alpha)^2
-2(\gamma+\phi\alpha)(\gamma+\phi\alpha)
+ (\gamma+\phi\alpha)^2 =0. \qedhere
\end{align*}
\end{proof}

Finally, we can complete the case of Theorem~\ref{Thm: pointwise convergence} where $0<\alpha<\frac{1}{\sqrt{5}}$ using Chebyshev's inequality.

\begin{corollary}\label{Cor: pointwise small alpha}
Given $0<\alpha<\frac{1}{\sqrt{5}}$ and $\epsilon>0$, we have
\[\lim_{g\to\infty} \P_g[ |\psi_S(\alpha) -\psi(\alpha)|<\epsilon ] =1.\]
\end{corollary}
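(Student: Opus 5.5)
The plan is to apply Chebyshev's inequality, in its second-moment (Markov) form, to the random variable $\psi_S(\alpha)-\psi(\alpha)$. For $0<\alpha<\frac{1}{\sqrt{5}}$ we have $\psi(\alpha)=\gamma+\phi\alpha$, so the relevant quantity is exactly the one controlled by Corollary~\ref{Cor: small alpha exp concentration}.

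First, fix $\epsilon>0$. Applying Markov's inequality to the nonnegative random variable $\big(\psi_S(\alpha)-(\gamma+\phi\alpha)\big)^2$ gives
\[
\P_g\big[|\psi_S(\alpha)-\psi(\alpha)|\ge \epsilon\big]\le \frac{\E_g\big[\big(\psi_S(\alpha)-(\gamma+\phi\alpha)\big)^2\big]}{\epsilon^2}.
\]
This is the version of Chebyshev's inequality centered at the constant $\gamma+\phi\alpha$ rather than at $\E_g[\psi_S(\alpha)]$, which avoids having to handle the mean separately.

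Second, by Corollary~\ref{Cor: small alpha exp concentration}, the numerator tends to $0$ as $g\to\infty$. Hence the right-hand side tends to $0$. Passing to complements yields $\P_g[|\psi_S(\alpha)-\psi(\alpha)|<\epsilon]\to 1$.

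There is essentially no obstacle here: all the real work was done in Propositions~\ref{Prop: small alpha depth 2}, \ref{Prop: small alpha depth 3} and~\ref{Prop: small alpha}. The only point needing care is to use the deviation from the limiting value $\gamma+\phi\alpha$ rather than from the finite-$g$ mean. If one insisted on the usual centered form of Chebyshev, one would also note that $\Var_g[\psi_S(\alpha)]\to 0$ and $\E_g[\psi_S(\alpha)]\to\gamma+\phi\alpha$ by Proposition~\ref{Prop: small alpha}. Then, for $g$ large, $|\E_g[\psi_S(\alpha)]-\psi(\alpha)|<\epsilon/2$, and one applies Chebyshev with $\epsilon/2$.
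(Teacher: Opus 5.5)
Your proposal is correct and is the same argument as the paper's. The paper applies Chebyshev's inequality in exactly this uncentered form, bounding $\P_g\big(|\psi_S(\alpha)-\psi(\alpha)|\geq \epsilon\big)$ by $\E_g[(\psi_S(\alpha)-\psi(\alpha))^2]/\epsilon^2$, and then invokes Corollary~\ref{Cor: small alpha exp concentration}.
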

\begin{proof}
By Chebyshev's inequality, we have
\[
\P_g\big(|\psi_S(\alpha)-\psi(\alpha)|\geq \epsilon\big)
\leq \frac{\E_g[(\psi_S(\alpha)-\psi(\alpha))^2]}{\epsilon^2}.
\]
Applying Corollary~\ref{Cor: small alpha exp concentration} completes the proof.
\end{proof}

\begin{proof}[Proof of Theorem~\ref{Thm: pointwise convergence}]
This follows from Proposition~\ref{Prop: pointwise alpha 0}, Proposition~\ref{Prop: pointwise large alpha}, and Corollary~\ref{Cor: pointwise small alpha}.
\end{proof}

\section{Numerical semigroups ordered by Frobenius number: The proof of Theorem \ref{Thm: Uniform Convergence by frob}}\label{Sec: proof for frob}

The argument for numerical semigroups ordered by Frobenius number follows the same structure as the argument for numerical semigroups ordered by genus. Theorem \ref{Thm: Uniform Convergence by frob} follows from a pointwise convergence result.

\begin{theorem}\label{Thm: Frob pointwise convergence}
Given $\alpha\in[0,1]\setminus\{\frac{1}{4}\}$ and $\epsilon>0$, we have
\[
\lim_{F\to\infty} \P_{\Frob,F}[ |\Xi_S(\alpha) -\Xi(\alpha)|<\epsilon ] =1.
\]
\end{theorem}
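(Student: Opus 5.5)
I will follow the structure of Section~\ref{Sec: pointwise} and split into the three regimes $\alpha=0$, $\frac14<\alpha\le 1$, and $0<\alpha<\frac14$.

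\emph{The case $\alpha=0$.} Here $\Xi_S(0)=m(S)/F$. Theorem~\ref{thm: backelin m=F/2} gives $|m(S)-F/2|\le M$ with probability at least $1-\epsilon'$ for every $F$, so $m(S)/F\to\frac12=\Xi(0)$ in probability.

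\emph{The case $\frac14<\alpha\le 1$.} Let $t=1+\lfloor\alpha(F-1)\rfloor$. By Lemma~\ref{Lem: t>F+1-g}, if $t\ge F+1-g(S)$ then $a_t(S)=t+g(S)$. Then $\Xi_S(\alpha)=(t+g(S))/F$, which differs from $\alpha+g(S)/F$ by $O(1/F)$. Theorem~\ref{thm: singhal genus by frob} gives $g(S)/F\to\frac34$ in probability. The hypothesis $t\ge F+1-g(S)$ holds whenever $g(S)>(1-\alpha)F+2$, and since $1-\alpha<\frac34$ this happens asymptotically almost surely. Hence $\Xi_S(\alpha)\to\frac34+\alpha$ in probability.

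\emph{The case $0<\alpha<\frac14$.} I will show $\E_{\Frob,F}[\Xi_S(\alpha)]\to\frac12+2\alpha$ and $\E_{\Frob,F}[\Xi_S(\alpha)^2]\to(\frac12+2\alpha)^2$, and then conclude by Chebyshev as in Corollary~\ref{Cor: pointwise small alpha}. By Theorem~\ref{thm: backelin m=F/2}, the semigroups outside $\B_{\Frob}(F)\cup\cC_{\Frob}(F)$ have vanishing probability, since depth greater than $3$ forces $m\le F/3$. Together with Theorem~\ref{thm: singhal genus by frob}, we may restrict to $g$ with $|g-\frac34F|<\epsilon_1F$ and, in depth $3$, to $l=F-2m<\epsilon_1F$. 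The bounded contribution $\Xi_S\le 2$ of the remaining semigroups tends to zero.

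For depth $2$, condition on $g$. By Proposition~\ref{Prop: depth 2 by frob}, $a_{k+1}(S)=\lfloor F/2\rfloor+X_{(k)}$, where $X_{(k)}$ is the $k$th order statistic of a uniform $(F-g)$-subset of $\{1,\dots,\lfloor\frac{F-1}{2}\rfloor\}$. Theorem~\ref{Thm: order statistic} gives mean $\frac{F}{2}\cdot\frac{k}{F-g}+O(1)$ and variance $O(F)$. With $k\approx\alpha F$ and $F-g\approx F/4$, the mean is $\approx 2\alpha F$, so $\E[a_{k+1}]/F\approx\frac12+2\alpha$. This requires $k\le F-g$, which is exactly why $\alpha<\frac14$ is needed, with a margin chosen as $\alpha<\frac14-c\epsilon_1$. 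The variance divided by $F^2$ is $O(1/F)$.

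For depth $3$, condition further on $m$ and on $(A_1,A_2)$ in the image of $\chi_{g,m,F-2m}$, and use Proposition~\ref{Prop: parameterize C(F,m,g) by frob}. Then $a_k(S)$ for $|A_1|<k\le F-g-|A_2|$ is $F-m$ plus the $(k-|A_1|)$th order statistic of an $(F-g-|A_1|-|A_2|)$-subset of $\{1,\dots,3m-F-1\}$. Since $|A_1|,|A_2|\le l<\epsilon_1F$ and $3m-F\approx F/2$, the same computation gives mean $\frac F2+2\alpha F+O(\epsilon_1F)$ and variance $O(F)$. I need $\alpha>c\epsilon_1$ so that $k>|A_1|$.

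Averaging these conditional bounds with the law of total expectation, exactly as in Propositions~\ref{Prop: small alpha depth 2}, \ref{Prop: small alpha depth 3} and~\ref{Prop: small alpha}, and then letting $\epsilon_1\to0$, gives both limits.

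The main obstacle is bookkeeping the explicit error constants in the depth $3$ bound, where the perturbations from $l$, $|A_1|$, $|A_2|$ and $g-\frac34F$ must all be controlled uniformly. The denominator $F-g-|A_1|-|A_2|\ge F/4-3\epsilon_1F$ is bounded away from zero, so these constants can be handled exactly as in Lemma~\ref{Lem: depth 3 fix A1 A2}.
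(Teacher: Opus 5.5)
Your proposal is correct and follows the paper's proof essentially step for step. It uses the same three regimes, Backelin's and Singhal's concentration results for $\alpha=0$ and $\frac14<\alpha\le 1$, and, for $0<\alpha<\frac14$, the depth-$2$/depth-$3$ decomposition via Propositions~\ref{Prop: depth 2 by frob} and~\ref{Prop: parameterize C(F,m,g) by frob} with order statistics, the law of total expectation and Chebyshev. One small slip: ordered by Frobenius number, the multiplicity of a depth-$2$ semigroup already lies in $\lfloor F/2\rfloor+B$, so the correct relation is $a_k(S)=\lfloor F/2\rfloor+X_{(k)}$ rather than $a_{k+1}(S)$, though this index shift only perturbs the conditional first and second moments of $\Xi_S(\alpha)$ by $O(1/F)$.
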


The majority of this section is devoted to proving this theorem.  Before we go into the proof, we explain how this theorem leads to the proof of Theorem \ref{Thm: Uniform Convergence by frob}.

\begin{proof}[Proof of Theorem \ref{Thm: Uniform Convergence by frob} assuming Theorem \ref{Thm: Frob pointwise convergence}]

Consider $\epsilon>0$.
Choose $N$ such that $\frac{2}{2N+1} <\frac{\epsilon}{2}$. For each $i$ satisfying $0 \le i \le 2N+1$, let $\alpha_i=\frac{i}{2N+1}$. Note that $\Xi(\alpha_{i+1})-\Xi(\alpha_i)<\frac{\epsilon}{2}$.
Now given any $\alpha\in [0,1]$, there is some $i$ for which $\alpha_i\leq \alpha\leq \alpha_{i+1}$. Note that
\begin{eqnarray*}
\Xi_S(\alpha)-\Xi(\alpha) & \leq \Xi_S(\alpha_{i+1})-\Xi(\alpha_i) & < \Xi_S(\alpha_{i+1})-\Xi(\alpha_{i+1}) +\frac{\epsilon}{2},\\
\Xi(\alpha)-\Xi_S(\alpha) & \leq \Xi(\alpha_{i+1})-\Xi_S(\alpha_i)  &< \Xi(\alpha_{i})-\Xi_S(\alpha_{i}) +\frac{\epsilon}{2}.
\end{eqnarray*}
From this we can conclude that
\[\sup_{0\leq \alpha\leq 1} |\Xi_S(\alpha)-\Xi(\alpha)| \leq \max_{0\leq i\leq 2N+1} |\Xi_S(\alpha_i)- \Xi(\alpha_i)| +\frac{\epsilon}{2}.\]
This means that if there is some $\alpha\in[0,1]$ for which $|\Xi_S(\alpha)-\Xi(\alpha)|>\epsilon$, then there is some $0\leq i\leq 2N+1$ for which $|\Xi_S(\alpha_i)-\Xi(\alpha_i)|>\frac{\epsilon}{2}$. This implies that
\[
\P_{\Frob,F}
\Big[ \sup_{0\leq \alpha\leq 1} |\Xi_S(\alpha) -\Xi(\alpha)| >\epsilon \Big] 
\leq \sum_{i=0}^{2N+1} 
\P_{\Frob,F}
\Big[ |\Xi_S(\alpha_i) -\Xi(\alpha_i)| >\frac{\epsilon}{2} \Big].
\]

For each $0\leq i\leq 2N+1$, Theorem~\ref{Thm: Frob pointwise convergence} says that
\[
\lim_{F\to\infty} \P_{\Frob,F}
\Big[ |\Xi_S(\alpha_i) -\Xi(\alpha_i)| 
>\frac{\epsilon}{2} \Big] 
= 0.
\]
The result follows.
\end{proof}

The goal of the rest of this section is to prove Theorem \ref{Thm: Frob pointwise convergence}. We start with the cases where $\alpha=0$ and where $\frac{1}{4}<\alpha\leq 1$ because these are easier to handle.

\begin{proposition}\label{Prop: Frob pointwise alpha 0}
Let $\epsilon>0$.  We have
\[
\lim_{F\to\infty} \P_{\Frob,F}[ |\Xi_S(0) -\Xi(0)|<\epsilon ] =1.
\]
\end{proposition}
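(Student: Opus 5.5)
The plan is to mirror the proof of Proposition~\ref{Prop: pointwise alpha 0}, replacing the genus input Theorem~\ref{Kaplan Mult} by Backelin's Theorem~\ref{thm: backelin m=F/2}. For any $S\in\S_{\Frob}(F)$ we have $t = 1+\lfloor 0\cdot(F-1)\rfloor = 1$, so
\[
\Xi_S(0)=\frac{a_1(S)}{F}=\frac{m(S)}{F},
\]
while $\Xi(0)=\frac{1}{2}$. Hence
\[
|\Xi_S(0)-\Xi(0)| = \frac{1}{F}\Big|m(S)-\frac{F}{2}\Big|,
\]
and the event $|\Xi_S(0)-\Xi(0)|<\epsilon$ is exactly the event $|m(S)-\frac{F}{2}|<\epsilon F$.

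Next fix $\delta>0$ and apply Theorem~\ref{thm: backelin m=F/2} with $\delta$ in place of its $\epsilon$. This gives an $M=M(\delta)$ such that
\[
\P_{\Frob,F}\Big[\Big|m(S)-\frac{F}{2}\Big|>M\Big]<\delta
\]
for every $F$. Once $F>M/\epsilon$, we have $M<\epsilon F$, so the event $|m(S)-\frac{F}{2}|\ge \epsilon F$ is contained in the event $|m(S)-\frac{F}{2}|>M$. Therefore
\[
\P_{\Frob,F}\big[|\Xi_S(0)-\Xi(0)|<\epsilon\big]>1-\delta
\]
for all sufficiently large $F$. Since $\delta$ was arbitrary, the limit equals $1$.

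There is no genuine obstacle here. The only points that need care are that $t=1$ at $\alpha=0$, and that Backelin's bound is additive with a constant $M$, which is much stronger than the $\epsilon F$ error we need.
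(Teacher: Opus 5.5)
Your proposal is correct and follows essentially the same route as the paper: you identify $\Xi_S(0)=m(S)/F$ and $\Xi(0)=\tfrac12$, then invoke Backelin's Theorem~\ref{thm: backelin m=F/2}. Your explicit step converting the additive bound $M$ into an $\epsilon F$ bound once $F>M/\epsilon$ is precisely the detail the paper's one-line proof leaves implicit.
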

\begin{proof}
For any $S \in \SS_{\Frob,F}$ we have $\Xi_S(0) = \frac{a_{1}(S)}{F} = \frac{m(S)}{F}$. Therefore, Theorem~\ref{thm: backelin m=F/2} implies that
\[
\lim_{F\to\infty} \P_{\Frob,F}\left[ \left|\Xi_S(0) -\Xi(0)\right|<\epsilon \right]
=\lim_{F\to\infty} \P_{\Frob,F}\left[ \left|m(S) -\frac{1}{2} F\right|<\epsilon F \right]=1. \qedhere
\]
\end{proof}

\begin{proposition}\label{Prop: Frob pointwise large alpha}
Suppose $\alpha$ satisfies $\frac{1}{4}<\alpha\leq 1$.  Let $\epsilon>0$.  We have
\[
\lim_{F\to\infty} \P_{\Frob,F}[ |\Xi_S(\alpha) -\Xi(\alpha)|<\epsilon ] =1.
\]
\end{proposition}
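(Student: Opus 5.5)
The plan is to mirror the proof of Proposition~\ref{Prop: pointwise large alpha}, using Lemma~\ref{Lem: t>F+1-g} together with the concentration of the genus near $\frac{3}{4}F$ from Theorem~\ref{thm: singhal genus by frob}. Set $t=1+\lfloor\alpha(F-1)\rfloor$, so that $\Xi_S(\alpha)=\frac{a_t(S)}{F}$. By Lemma~\ref{Lem: t>F+1-g}, whenever $t\geq F+1-g(S)$ we have $a_t(S)=t+g(S)$. On that event,
\[
\Xi_S(\alpha)=\frac{1+\lfloor\alpha(F-1)\rfloor+g(S)}{F},
\]
which differs from $\alpha+\frac{g(S)}{F}$ by at most $\frac{2}{F}$.

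Next, compare with $\Xi(\alpha)=\frac34+\alpha$. On the event $|g(S)-\frac34F|<\delta F$ we get
\[
|\Xi_S(\alpha)-\Xi(\alpha)|\le \delta+\frac{2}{F},
\]
provided also $t\ge F+1-g(S)$. Choose $\delta=\min(\frac{\epsilon}{2},\frac12(\alpha-\frac14))>0$. The second choice uses $\alpha>\frac14$ and guarantees the needed inequality on this event. Indeed, $F+1-g(S)<F+1-(\frac34-\delta)F=(\frac14+\delta)F+1$. Meanwhile $t\ge \alpha(F-1)=\alpha F-\alpha$, and $\alpha F-\alpha\ge(\frac14+\delta)F+1$ as soon as $(\alpha-\frac14-\delta)F\geq 1+\alpha$. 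That holds for large $F$, since $\alpha-\frac14-\delta\ge\frac12(\alpha-\frac14)>0$.

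Hence for $F$ large (so that also $\frac{2}{F}<\frac{\epsilon}{2}$), the event $|g(S)-\frac34F|<\delta F$ is contained in $|\Xi_S(\alpha)-\Xi(\alpha)|<\epsilon$. Theorem~\ref{thm: singhal genus by frob} gives probability tending to $1$ for the former, which finishes the proof.

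The only mildly delicate step is the bookkeeping showing $t\ge F+1-g(S)$ with a margin uniform in $F$. Everything else is direct substitution, and the constants need not be optimized.
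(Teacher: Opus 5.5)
Your proof is correct and is essentially the paper's argument. You apply Lemma~\ref{Lem: t>F+1-g} on the event $g(S)\ge F+1-t$, bound the error by $\frac{2}{F}+\bigl|\frac{g(S)}{F}-\frac34\bigr|$, and conclude from Theorem~\ref{thm: singhal genus by frob} using $1-\alpha<\frac34$. Your explicit choice of $\delta$ simply makes quantitative the paper's brief final step, namely that genus concentration forces $g(S)\ge(1-\alpha)F+2$ for large $F$.
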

\begin{proof}
Let $t=1+\lfloor\alpha (F-1)\rfloor$, so $\Xi_S(\alpha)= \frac{a_{t}(S)}{F}$. First, note that if $g(S)\geq F+1-t$, then Lemma~\ref{Lem: t>F+1-g} implies that $a_t(S) = t + g(S) = 1+\lfloor \alpha(g-1)\rfloor + g(S)$.  In this case,
\begin{align*}
|\Xi_S(\alpha) -\Xi(\alpha)|
&=\left|\frac{a_t(S)}{F} - \left(\frac{3}{4}+\alpha\right)\right|
= \left|\frac{1+\lfloor\alpha (F-1)\rfloor +g(S)}{F} - \left(\frac{3}{4}+\alpha\right)\right|\\
&\leq \left|\frac{1+\alpha (F-1) +g(S)}{F} - \left(\frac{3}{4}+\alpha\right)\right| +\frac{1}{F} 
<\frac{2}{F} +\left|\frac{g(S)}{F} - \frac{3}{4}\right|.
\end{align*}
Therefore, for $F>\frac{3}{\epsilon}$, we see that
\begin{align*}
\P_{\Frob,F}[ |\Xi_S(\alpha) -\Xi(\alpha)|<\epsilon ]
&\geq \P_{\Frob,F}\left[g(S)\geq F+1-t \text{ and } \left|\frac{g(S)}{F} - \frac{3}{4}\right| <\frac{\epsilon}{3} \right]\\
&\geq \P_{\Frob,F}\left[g(S)\geq (1-\alpha)F+2 \text{ and } \left|\frac{g(S)}{F} - \frac{3}{4}\right| <\frac{\epsilon}{3} \right].
\end{align*}

Since $1-\alpha<\frac{3}{4}$, the result follows from Theorem~\ref{thm: singhal genus by frob}.
\end{proof}

\subsection{Depth 2, $0<\alpha<\frac{1}{4}$}\label{Subsec: depth 2 by Frob}

By Proposition~\ref{Prop: depth 2 by frob}, if $S\in \B_{\Frob}(F)$ with $g(S)=g$, then $S=S_{F,B}$ for some subset $B\subseteq\{1,\dots,\lfloor\frac{F-1}{2}\rfloor\}$ of size $F-g$.
Note that $a_{k}(S)$ corresponds to the $k^{th}$ smallest element of $B$. Therefore, Theorem~\ref{Thm: order statistic} directly implies the following.

\begin{corollary}\label{Cor: ak depth 2 by Frob}
For $1\leq k\leq F-g$, we have
\begin{eqnarray*}
\E_{\Frob,F}[a_{k}(S)\mid S\in \B_{\Frob}(F,g)] & = & \left\lfloor\frac{F}{2}\right\rfloor+ \frac{\left(\left\lfloor\frac{F-1}{2}\right\rfloor+1\right)k}{F-g+1},\\
\Var_{\Frob, F}[a_{k}(S)\mid S\in \B_{\Frob}(F,g)] & = & \frac{\left(\left\lfloor\frac{F-1}{2}\right\rfloor+1\right)\left(\left\lfloor\frac{F-1}{2}\right\rfloor-F+g\right) k (F-g+1-k)}{(F+1-g)^2 (F+2-g)}.
\end{eqnarray*}
\end{corollary}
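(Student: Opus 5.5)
This corollary follows by applying Theorem~\ref{Thm: order statistic} through the bijection of Proposition~\ref{Prop: depth 2 by frob}.

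\emph{Step 1: identify $a_k(S)$ with an order statistic.} For $S\in\B_{\Frob}(F,g)$ we have $S=S_{F,B}$ with $B\subseteq\{1,\dots,N\}$, where $N=\lfloor\frac{F-1}{2}\rfloor$ and $|B|=n=F-g$. Every element of $\lfloor F/2\rfloor+B$ is at most $\lfloor F/2\rfloor+\lfloor\frac{F-1}{2}\rfloor=F-1<F+1$. The only other nonzero elements of $S$ are those $\geq F+1$. So the $n$ smallest nonzero elements of $S$ are exactly $\lfloor F/2\rfloor+X_{(1)}<\dots<\lfloor F/2\rfloor+X_{(n)}$, where $X_{(1)}<\dots<X_{(n)}$ is the ordered listing of $B$. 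Hence $a_k(S)=\lfloor F/2\rfloor+X_{(k)}$ for $1\le k\le F-g$.

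\emph{Step 2: transfer the uniform measure.} Conditioning the uniform measure $\P_{\Frob,F}$ on the event $S\in\B_{\Frob}(F,g)$ gives the uniform measure on $\B_{\Frob}(F,g)$. Through the bijection $B\mapsto S_{F,B}$, this is the uniform measure on $n$-subsets of $\{1,\dots,N\}$, which is exactly the setting of Theorem~\ref{Thm: order statistic}.

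\emph{Step 3: substitute.} Theorem~\ref{Thm: order statistic} gives
\[
\E[X_{(k)}]=\frac{(N+1)k}{n+1},\qquad
\Var[X_{(k)}]=\frac{(N+1)(N-n)}{(n+1)^2(n+2)}\,k(n+1-k).
\]
Shifting by the constant $\lfloor F/2\rfloor$ adds it to the mean and leaves the variance unchanged. Substituting $N=\lfloor\frac{F-1}{2}\rfloor$ and $n=F-g$ gives $N+1=\lfloor\frac{F-1}{2}\rfloor+1$, $N-n=\lfloor\frac{F-1}{2}\rfloor-F+g$, $n+1=F+1-g$ and $n+2=F+2-g$. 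These yield the stated formulas.

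The only point needing care is the verification in Step 1 that no element $\geq F+1$ falls below an element of $\lfloor F/2\rfloor+B$, so that the indexing of $a_k$ lines up with the order statistics. Everything else is direct substitution.
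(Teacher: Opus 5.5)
Your proposal is correct and is the paper's own argument. Via the bijection $B\mapsto S_{F,B}$ you identify $a_k(S)=\lfloor F/2\rfloor+X_{(k)}$ for a uniformly random $(F-g)$-subset $B$ of $\{1,\dots,\lfloor\frac{F-1}{2}\rfloor\}$, substitute $N=\lfloor\frac{F-1}{2}\rfloor$, $n=F-g$ into the order-statistic formulas, and your check that $\lfloor F/2\rfloor+B\subseteq[1,F-1]$ just spells out the indexing step the paper states without comment.
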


\begin{lemma}\label{Lem: exp, var depth 2 by Frob}
Assume $0<\epsilon_1<\tfrac18$ and $F>\tfrac{1}{\epsilon_1}$. Suppose $g$ and $\alpha$ satisfy $\big|g-\tfrac{3}{4} F\big|<\epsilon_1 F$ and $0< \alpha \le \tfrac14-2\epsilon_1$. Then
\[
\Bigl|\E_{\Frob,F}\big[\Xi_S(\alpha)\mid S\in\mathcal B_{\Frob}(F,g)\big]-\Big(\tfrac{1}{2}+2\alpha\Big)\Bigr|
\le 17\,\epsilon_1,
\]
and
\[
\Bigl|\E_{\Frob,F}\big[\Xi_S(\alpha)^2\mid S\in\mathcal B_{\Frob}(F,g)\big]-\Big(\tfrac{1}{2}+2\alpha\Big)^2\Bigr|
\le 563\,\epsilon_1.
\]
\end{lemma}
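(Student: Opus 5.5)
The plan mirrors the proof of Lemma~\ref{Lem: exp, var depth 2}, with Corollary~\ref{Cor: ak depth 2 by Frob} taking the place of Corollary~\ref{Cor: ak+1 depth 2}. Set $t=1+\lfloor\alpha(F-1)\rfloor$, so that $\Xi_S(\alpha)=a_t(S)/F$. First I will check that $1\le t\le F-g$, so that the corollary applies with $k=t$. We have $t\le \alpha F+1\le (\tfrac14-2\epsilon_1)F+1$, while $F-g>(\tfrac14-\epsilon_1)F$, and $1/F<\epsilon_1$ gives the margin. Next I introduce the scaled quantities $\kappa=t/F$ and $\beta=(F-g)/F$, so that $|\kappa-\alpha|\le 1/F<\epsilon_1$ and $|\beta-\tfrac14|<\epsilon_1$. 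I also write $h=(\lfloor\frac{F-1}{2}\rfloor+1)/F$ and $h_0=\lfloor F/2\rfloor/F$. Both lie within $1/F<\epsilon_1$ of $\tfrac12$.

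For the expectation, dividing Corollary~\ref{Cor: ak depth 2 by Frob} by $F$ gives
\[
\E_{\Frob,F}\big[\Xi_S(\alpha)\mid S\in\B_{\Frob}(F,g)\big]=h_0+\frac{h\kappa}{\beta+\frac1F}.
\]
I will bound the distance to $\tfrac12+2\alpha$ by three terms:
\[
|h_0-\tfrac12|+\kappa\Big|\frac{h}{\beta+1/F}-2\Big|+2|\kappa-\alpha|.
\]
The middle term is the only real one. Its numerator is $|h-2\beta-\tfrac2F|\le |h-\tfrac12|+2|\beta-\tfrac14|+\tfrac2F<5\epsilon_1$. Its denominator satisfies $\beta+\tfrac1F>\tfrac14-\epsilon_1>\tfrac18$, since $\epsilon_1<\tfrac18$. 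Because $\kappa\le\tfrac14$, the middle term is at most $\tfrac14\cdot 8\cdot 5\epsilon_1=10\epsilon_1$. Together with $\epsilon_1$ from the first term and $2\epsilon_1$ from the last, this gives a total of about $13\epsilon_1\le 17\epsilon_1$.

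For the second moment, I will bound the conditional variance. After scaling by $F^{-2}$, Corollary~\ref{Cor: ak depth 2 by Frob} gives
\[
\frac{h\,(h-\beta-\text{small})\,\kappa\,(\beta+\tfrac1F-\kappa)}{F\,(\beta+\tfrac1F)^2(\beta+\tfrac2F)}.
\]
The factors in the numerator are $O(1)$: each is at most about $1/2$ or $1/4$. The denominator is at least $F(\tfrac18)^3$. So the variance is at most $C/F<C\epsilon_1$ for an explicit constant $C$; crudely, $C\le 512\cdot\tfrac12\cdot\tfrac12\cdot\tfrac14\cdot\tfrac14$, or something of that order. Then I use
\[
\big|\E[X^2]-c^2\big|\le \Var[X]+\big|\E[X]-c\big|\,\big(|\E[X]|+|c|\big)
\]
with $c=\tfrac12+2\alpha\le 1$ and $\E[X]\le 2$. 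This gives at most $C\epsilon_1+17\epsilon_1\cdot 3$, and the generous constant $563$ should absorb this easily.

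The main obstacle is just bookkeeping. I need the lower bound on the denominator $\beta+1/F$ to stay uniform, which is where $\epsilon_1<\tfrac18$ is used. I also need the floor functions in $h$ and $h_0$ and the shift $\tfrac1F$ to be absorbed by $F>\tfrac1{\epsilon_1}$. Finally, the factor $\lfloor\frac{F-1}{2}\rfloor-F+g$ in the variance must be checked to be nonnegative and bounded, which holds since $g\ge \lceil F/2\rceil$ for depth $2$. I expect no conceptual difficulty beyond matching these constants.
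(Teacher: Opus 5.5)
Your proposal is correct and follows essentially the same route as the paper: the same scaled quantities, the lower bound $\beta+\frac1F>\frac18$, an $O(1/F)$ variance bound, and the identity $\E[X^2]=\Var[X]+\E[X]^2$. The differences are bookkeeping only: the paper bounds every variance factor crudely by $F$ to get $512\epsilon_1$, splits the expectation error slightly differently to reach $17\epsilon_1$, and leaves implicit the check $t\le F-g$ that you make explicit; also, your nonnegativity justification should read $g\ge\lfloor F/2\rfloor+1$ rather than $g\ge\lceil F/2\rceil$, which is harmless since a variance is nonnegative anyway.
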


\begin{proof}
Let $t=1+\lfloor \alpha(F-1)\rfloor$ and set
\[
\kappa=\frac{t}{F},\qquad \delta=\frac{F-g}{F},\qquad h=\frac{\lfloor\frac{F}{2}\rfloor}{F},\qquad H=\frac{\lfloor\frac{F-1}{2}\rfloor+1}{F}.
\]
Note that $|\delta-\frac{1}{4}|< \epsilon_1$ by the hypothesis on $g$, and that $|\kappa-\alpha|\leq\frac{1}{F} <\epsilon_1 $.

By Corollary~\ref{Cor: ak depth 2 by Frob},
\[
\E_{\Frob,F}[a_t(S)\mid S\in \mathcal B_{\Frob}(F,g)]
=\Big\lfloor\frac{F}{2}\Big\rfloor+\frac{(\lfloor\frac{F-1}{2}\rfloor+1)t}{(F-g)+1}.
\]
Divide by $F$ to obtain
\[
\E_{\Frob,F}[\Xi_S(\alpha)\mid S\in \mathcal B_{\Frob}(F,g)]
=h+H\cdot\frac{\kappa}{\delta+\tfrac{1}{F}}.
\]
Hence
\[
\Bigl|\E_{\Frob,F}[\Xi_S(\alpha)\mid S\in \mathcal B_{\Frob}(F),\ g(S)=g]-\Big(\tfrac12+2\alpha\Big)\Bigr|
\le \Big|h-\tfrac12\Big|
+\Big|H\cdot\frac{\kappa}{\delta+\tfrac{1}{F}}-2\alpha\Big|.
\]
It is easy to see that $|h-\frac{1}{2}| \le \epsilon_1/2$. For the second term,
\[
\Big|H\cdot\frac{\kappa}{\delta+\tfrac{1}{F}}-2\alpha\Big|
\leq \Big|H-\tfrac12\Big|\cdot \frac{\kappa}{\delta+\tfrac{1}{F}}
+\frac12\Big|\frac{\kappa}{\delta+\tfrac{1}{F}}-4\alpha\Big|.
\]
Since $|H-\tfrac12|< \tfrac{\epsilon_1}{2}$, $\kappa\leq 1$, and $\delta +\frac{1}{F}\ge \frac{1}{4}-\epsilon_1\geq \frac{1}{8}$, for the first summand we have
\[
\Big|H-\tfrac12\Big|\cdot \frac{\kappa}{\delta+\tfrac{1}{F}}
\leq \frac{\epsilon_1}{2} \cdot \frac{1}{\left(\frac{1}{8}\right)}= 4\epsilon_1.
\]
For the remaining summand,
\[
\Big|\frac{\kappa}{\delta+\tfrac{1}{F}}-4\alpha\Big|
\le \frac{|\kappa-\alpha|}{\delta+\tfrac{1}{F}}
+\alpha\Big|\frac{1-4(\delta+\frac{1}{F})}{\delta+\tfrac{1}{F}}\Big|.
\]
Recall that $|\kappa-\alpha|< \epsilon_1$, $\delta+\tfrac{1}{F}\geq \frac{1}{8}$, $\alpha\leq \frac{1}{4}$, and $|1-4\delta|< 4\epsilon_1$.  We see that
\[
\frac{|\kappa-\alpha|}{\delta+\tfrac{1}{F}}
+\alpha\Big|\frac{1-4(\delta+\frac{1}{F})}{\delta+\tfrac{1}{F}}\Big|
\leq \frac{\epsilon_1}{\frac{1}{8}} + \frac{1}{4} \frac{4\epsilon_1 +4\epsilon_1}{\frac{1}{8}}
=24\epsilon_1.
\]
Putting the bounds together,
\[
\Bigl|\E_{\Frob,F}[\Xi_S(\alpha)\mid S\in \B_{\Frob}(F,g)]-\Big(\tfrac12+2\alpha\Big)\Bigr|
\le \frac{\epsilon_1}{2}+4\epsilon_1+12\epsilon_1
< 17 \epsilon_1.
\]

For the second statement, we write
\begin{align*}
&\E_{\Frob,F}[\Xi_S(\alpha)^2\mid S\in \B_{\Frob}(F,g)]\\
&=\Var_{\Frob,F}(\Xi_S(\alpha)\mid S\in \B_{\Frob}(F,g))+\E_{\Frob,F}[\Xi_S(\alpha)\mid S\in \B_{\Frob}(F,g)]^2.
\end{align*}
From Corollary~\ref{Cor: ak depth 2 by Frob},
\begin{align*}
\Var_{\Frob,F}(\Xi_S(\alpha)\mid S\in \B_{\Frob}(F,g))
&=\frac{\Var_{\Frob,F}(a_t(S)\mid S\in \B_{\Frob}(F,g))}{F^2}\\
&\le \frac{(\lfloor\frac{F-1}{2}\rfloor+1)(\lfloor\frac{F-1}{2}\rfloor-F+g) t (F-g+1-t)}{(F+1-g)^2 (F+2-g) F^2}.
\end{align*}
Since $g\leq F$ and $t\leq F$, we see that
\begin{align*}
\Var_{\Frob,F}(\Xi_S(\alpha)\mid S\in \B_{\Frob}(F,g))
&\leq \frac{F^4}{(F+1-g)^3 F^2} =\frac{1}{F} \frac{1}{(\delta+\frac{1}{F})^3} <8^3 \epsilon_1.
\end{align*}
Since $\Xi_S(\alpha)\in[0,2]$,
\begin{align*}
& \Bigl| \E_{\Frob,F}\big[\Xi_S(\alpha)^2 \mid S\in\mathcal B_{\Frob}(F,g)\big]
-\Big(\tfrac{1}{2}+2\alpha\Big)^2\Bigr|\\
= &\ \Bigl| \Var_{\Frob,F}(\Xi_S(\alpha)\mid S\in \B_{\Frob}(F,g))
+\E_{\Frob,F}[\Xi_S(\alpha)\mid S\in \B_{\Frob}(F,g)]^2
-\Big(\tfrac{1}{2}+2\alpha\Big)^2\Bigr|\\
\leq &\ \left| \Var_{\Frob,F}(\Xi_S(\alpha)\mid S\in \B_{\Frob}(F,g)) \right| \\
& +\bigl| \E_{\Frob,F}[\Xi_S(\alpha)\mid S\in \B_{\Frob}(F,g)]
-\big(\tfrac{1}{2}+2\alpha\big) \bigr| 
\cdot \bigl| \E_{\Frob,F}[\Xi_S(\alpha)\mid S\in \B_{\Frob}(F,g)]
+\big(\tfrac{1}{2}+2\alpha\big)\bigr|\\
< &\  512 \epsilon_1 + (17\epsilon_1) (2+1) =563\epsilon_1. \qedhere 
\end{align*}
\end{proof}

\begin{proposition}\label{Prop: small alpha depth 2 by frob}
Suppose that $0<\alpha<\frac{1}{4}$.
Then we have
\[
\lim_{F\to\infty} \E_{\Frob,F} \big[\Xi_S(\alpha)\mid S\in\mathcal B_{\Frob}(F)\big] = \frac{1}{2}+2\alpha. 
\]
Moreover,
\[\lim_{F\to\infty} \E_{\Frob,F} \big[\Xi_S(\alpha)^2 \mid S\in\mathcal B_{\Frob}(F)\big] = \Big(\frac{1}{2}+2\alpha\Big)^2. \]
\end{proposition}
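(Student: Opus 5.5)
The plan is to mirror the proof of Proposition~\ref{Prop: small alpha depth 2}, with genus now playing the role that multiplicity played there. First fix
\[
0<\epsilon_1<\min\left(\tfrac18,\ \tfrac12\left(\tfrac14-\alpha\right)\right),
\]
so that $0<\alpha\le \frac14-2\epsilon_1$, and take $F>\frac{1}{\epsilon_1}$. By the law of total expectation, conditioning on the genus inside $\B_{\Frob}(F)$, we have
\[
\E_{\Frob,F}\big[\Xi_S(\alpha)\mid S\in\B_{\Frob}(F)\big]=\sum_{g}\E_{\Frob,F}\big[\Xi_S(\alpha)\mid S\in\B_{\Frob}(F,g)\big]\,\P_{\Frob,F}\big[g(S)=g\mid S\in\B_{\Frob}(F)\big].
\]
We split this sum according to whether $|g-\frac34F|<\epsilon_1F$.

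On the good range, Lemma~\ref{Lem: exp, var depth 2 by Frob} gives a deviation of at most $17\epsilon_1$ from $\frac12+2\alpha$. On the bad range we use the trivial bound: since $\Xi_S(\alpha)\in[0,2]$ and $\frac12+2\alpha\le 1$, each conditional expectation deviates by at most $2$. This yields
\[
\Big|\E_{\Frob,F}[\Xi_S(\alpha)\mid S\in\B_{\Frob}(F)]-\big(\tfrac12+2\alpha\big)\Big|\le 17\epsilon_1+2\,\P_{\Frob,F}\Big[\big|g(S)-\tfrac34F\big|\ge\epsilon_1F \ \Big|\ S\in\B_{\Frob}(F)\Big].
\]
The second moment is handled identically. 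We use the bound $563\epsilon_1$ from Lemma~\ref{Lem: exp, var depth 2 by Frob} on the good range and the trivial bound $4$ on the bad range.

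The only point needing care, and the one I expect to be the main obstacle, is that the conditional probability of the bad event given $S\in\B_{\Frob}(F)$ tends to $0$. Theorem~\ref{thm: singhal genus by frob} controls the unconditional probability. Since
\[
\P[E\mid\B]\le \frac{\P[E]}{\P[\B]},
\]
it suffices to show that $\P_{\Frob,F}[S\in\B_{\Frob}(F)]$ is bounded below by a positive constant for large $F$. For this I would use Theorem~\ref{thm: backelin m=F/2} (or directly Proposition~\ref{Prop: depth 2 by frob}). Summing $\binom{\lfloor (F-1)/2\rfloor}{F-g}$ over $g$ gives
\[
|\B_{\Frob}(F)|=2^{\lfloor (F-1)/2\rfloor},
\]
which is a constant multiple of $\sqrt2^F$. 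Comparing with Backelin's asymptotic $N_{\Frob}(F)\sim c_i\sqrt2^F$ (with $c_i$ depending on parity) shows that $\P_{\Frob,F}[\B_{\Frob}(F)]$ stays bounded away from $0$.

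Given this lower bound, taking $\limsup_{F\to\infty}$ gives a bound of $17\epsilon_1$ (respectively $563\epsilon_1$) on the deviation. Since $\epsilon_1$ is arbitrary, both limits follow.
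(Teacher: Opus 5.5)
Your proposal is correct and follows the paper's proof essentially step for step: the same choice of $\epsilon_1$, the same split by genus, Lemma~\ref{Lem: exp, var depth 2 by Frob} on the range $|g-\frac34F|<\epsilon_1F$, and the trivial bounds $2$ and $4$ off it. The one addition is that you justify why the bad event has vanishing probability \emph{conditionally} on $S\in\B_{\Frob}(F)$, which the paper simply asserts from Theorem~\ref{thm: singhal genus by frob}; your argument via $|\B_{\Frob}(F)|=2^{\lfloor (F-1)/2\rfloor}$ and Backelin's count $N_{\Frob}(F)\sim c_i\sqrt2^{F}$ is valid, though note that the lower bound on $\P_{\Frob,F}[S\in\B_{\Frob}(F)]$ comes from that counting result, not from Theorem~\ref{thm: backelin m=F/2}, which cannot distinguish depth $2$ from depth $3$.
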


\begin{proof}
Choose $\epsilon_1$ satisfying $0<\epsilon_1<\min\big(\tfrac18,\ \tfrac{1}{2}\big(\tfrac14-\alpha\big)\big)$.

Using the law of total expectation and then dividing up $\B_{\Frob}(F)$ based on the value of $g$, we see that
\begin{align*}
&\ \Bigl|\E_{\Frob,F}\big[\Xi_S(\alpha) \mid S\in\mathcal B_{\Frob}(F) \big]
-\Big(\tfrac12+2\alpha\Big)\Bigr|\\
=&\ \Bigl|
\sum_{g}\E_{\Frob,F}\big[\Xi_S(\alpha)\mid S\in\B_{\Frob}(F,g)\big]
   \P_{\Frob,F}\big(g(S)=g \mid S\in\mathcal B_{\Frob}(F)\big) 
   -\Big(\tfrac12+2\alpha\Big)\Bigr|\\
\le & \ \sum_{g: |g-\frac{3F}{4}|<\epsilon_1 F}
   \Bigl|\E_{\Frob,F}\big[\Xi_S(\alpha)\mid \B_{\Frob}(F,g)\big]
   -\Big(\tfrac12+2\alpha\Big)\Bigr|\;
   \P_{\Frob,F}\big(g(S)=g \mid S\in\mathcal B_{\Frob}(F)\big)\\
& +\sum_{g: |g-\frac{3F}{4}|\ge \epsilon_1 F}
   \Bigl|\E_{\Frob,F}\big[\Xi_S(\alpha)\mid \B_{\Frob}(F,g)\big]-\Big(\tfrac12+2\alpha\Big)\Bigr|\;
   \P_{\Frob,F}\big(g(S)=g \mid S\in\mathcal B_{\Frob}(F)\big).
\end{align*}
Lemma~\ref{Lem: exp, var depth 2 by Frob} and the fact that $\Xi_S(\alpha)\in [0,2]$, imply that
\begin{align*}
&\Bigl|\E_{\Frob,F}\big[\Xi_S(\alpha) \mid S\in\mathcal B_{\Frob}(F) \big]-\Big(\tfrac12+2\alpha\Big)\Bigr|\\
&\le \sum_{g: |g-\frac{3F}{4}|<\epsilon_1 F}
   17 \epsilon_1
   \P_{\Frob,F}\big(g(S)=g \mid S\in\mathcal B_{\Frob}(F)\big)
   +\sum_{g: |g-\frac{3F}{4}| \ge \epsilon_1 F}
   2   \P_{\Frob,F}\big(g(S)=g\mid S\in\mathcal B_{\Frob}(F)\big)\\
&= 17 \epsilon_1
   \P_{\Frob,F}\left(\left|g-\frac{3F}{4}\right|<\epsilon_1 F \mid S\in\mathcal B_{\Frob}(F)\right)
   +2   \P_{\Frob,F}\left(\left|g-\frac{3F}{4}\right| \ge \epsilon_1 F \mid S\in\mathcal B_{\Frob}(F)\right).
\end{align*}
By Theorem~\ref{thm: singhal genus by frob}, $\P_{\Frob,F}\big[|g-\frac{3F}{4}|\ge \epsilon_1 F \mid S\in\mathcal B_{\Frob}(F)\big]\to 0$ as $F\to\infty$.
Taking $\limsup_{F\to\infty}$ gives
\[
\limsup_{F\to\infty}\Bigl|\E_{\Frob,F}\big[\Xi_S(\alpha) \mid S\in\mathcal B_{\Frob}(F) \big]-\Big(\tfrac12+2\alpha\Big)\Bigr|
\le 17\epsilon_1.
\]
Since $\epsilon_1$ can be chosen to be arbitrarily small, we conclude
\[
\lim_{F\to\infty}\E_{\Frob,F}\big[\Xi_S(\alpha) \mid S\in\mathcal B_{\Frob}(F) \big]=\tfrac12+2\alpha.
\]

Next, for the second moment, exactly the same decomposition yields
\begin{align*}
&\ \Bigl|\E_{\Frob,F}\big[\Xi_S(\alpha)^2 \mid S\in\mathcal B_{\Frob}(F) \big]-\Big(\tfrac12+2\alpha\Big)^2\Bigr|\\
\le & \ \sum_{g: |g-\frac{3F}{4}|<\epsilon_1 F}
\Bigl|\E_{\Frob,F}\big[\Xi_S(\alpha)^2\mid \B_{\Frob}(F,g)\big]-\Big(\tfrac12+2\alpha\Big)^2\Bigr|
   \P_{\Frob,F}(g(S)=g \mid S\in\mathcal B_{\Frob}(F)\big)\\
& +\sum_{g: |g-\frac{3F}{4}| \ge \epsilon_1 F}
\Bigl|\E_{\Frob,F}\big[\Xi_S(\alpha)^2\mid \B_{\Frob}(F,g)\big]-\Big(\tfrac12+2\alpha\Big)^2\Bigr|
   \P_{\Frob,F}(g(S)=g\mid S\in\mathcal B_{\Frob}(F)\big).
\end{align*}
By Lemma~\ref{Lem: exp, var depth 2 by Frob} and the fact that $\Xi_S(\alpha)\in[0,2]$, we see that
\begin{align*}
&\ \Bigl|\E_{\Frob,F}\big[\Xi_S(\alpha)^2 \mid S\in\mathcal B_{\Frob}(F) \big]-\Big(\tfrac12+2\alpha\Big)^2\Bigr|\\
\le &\ 563\epsilon_1 \P_{\Frob,F}\left(\left|g-\frac{3F}{4}\right|<\epsilon_1 F \mid S\in\mathcal B_{\Frob}(F)\right)
+4\P_{\Frob,F}\left(\left|g-\frac{3F}{4}\right|>\epsilon_1 F \mid S\in\mathcal B_{\Frob}(F)\right).
\end{align*}
Taking $\limsup_{F\to\infty}$, using Theorem~\ref{thm: singhal genus by frob} and then choosing decreasing values of $\epsilon_1$ approaching $0$ gives
\[
\lim_{F\to\infty}\E_{\Frob,F}\big[\Xi_S(\alpha)^2 \mid S\in\mathcal B_{\Frob}(F) \big]=\Big(\tfrac12+2\alpha\Big)^2.
\]
This completes the proof.
\end{proof}

\subsection{Depth 3, $0<\alpha<\frac{1}{4}$}

In this section, we follow the same strategy as in Section~\ref{Subsec: depth 2 by Frob}, but focus on the semigroups in $\cC_{\Frob}(F)$. We start by applying Theorem~\ref{Thm: order statistic} in the setting of Proposition~\ref{Prop: parameterize C(F,m,g) by frob}.  

\begin{lemma}\label{Lem: aks depth 3 F}
Consider $(A_1,A_2)\in \Ima(\chi_{g,m,F-2m})$. For $|A_1|<k\leq F-g-|A_2|$, we have
\begin{align*}
&\E_{\Frob,F}[a_{k}(S)\mid S\in \cC_{\Frob}(F,g,m), \chi_{g,m,F-2m}(S)=(A_1,A_2)] \\
&= (F-m)+ \frac{(3m-F) (k-|A_1|)}{(F-g-|A_1|-|A_2|+1)},  
\end{align*}
and 
\begin{align*}
&\Var_{\Frob,F}[a_{k}(S)\mid S\in \cC_{\Frob}(F,g,m), \chi_{g,m,F-2m}(S)=(A_1,A_2)] \\
&= \frac{(3m-F)(3m+g-2F+|A_1|+|A_2|-1)(k-|A_1|)(F-g-|A_2|+1-k)}{(F-g-|A_1|-|A_2|+1)^2 (F-g-|A_1|-|A_2|+2)}.  
\end{align*}
\end{lemma}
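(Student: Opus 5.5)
This is a direct specialization of Lemma~\ref{Lem: aks depth 3} with $l=F-2m$. By the remark preceding Proposition~\ref{Prop: parameterize C(F,m,g) by frob}, we have $\cC_{\Frob}(F,g,m)=\cC(g,m,F-2m)$. Moreover, conditioning on $S\in\cC_{\Frob}(F,g,m)$ together with the value of $\chi_{g,m,F-2m}(S)$, under the uniform measure $\P_{\Frob,F}$, gives the uniform distribution on the fibre of $\chi_{g,m,F-2m}$ over $(A_1,A_2)$. Conditioning on the same set under $\P_g$ gives that same uniform distribution. The conditional expectation and variance therefore do not depend on which ambient uniform measure we started from.

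Alternatively, and more transparently, I would argue directly from Proposition~\ref{Prop: parameterize C(F,m,g) by frob}, exactly as in the proof of Lemma~\ref{Lem: aks depth 3}.

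1. The fibre is in bijection with subsets $B\subseteq\{1,\dots,3m-F-1\}$ of size $n=F-g-|A_1|-|A_2|$, via $S_B=\{0\}\cup A_1\cup((F-m)+B)\cup A_2\cup\{F+1\rightarrow\}$.
2. Since $A_1\subseteq[m,F-m]$ and every element of $(F-m)+B$ lies strictly between $F-m$ and $2m$, the nonzero elements of $S_B$ in increasing order are: first the $|A_1|$ elements of $A_1$, then the elements of $(F-m)+B$, then those of $A_2$.
3. Hence, for $|A_1|<k\le |A_1|+n=F-g-|A_2|$, we have $a_k(S_B)=(F-m)+X_{(k-|A_1|)}$, where $X_{(j)}$ is the $j$-th order statistic of a uniformly random $n$-subset of $\{1,\dots,N\}$ with $N=3m-F-1$.

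Apply Theorem~\ref{Thm: order statistic} with $N+1=3m-F$ and $n+1=F-g-|A_1|-|A_2|+1$. This immediately gives the stated expectation
\[
(F-m)+\frac{(3m-F)(k-|A_1|)}{F-g-|A_1|-|A_2|+1}.
\]
For the variance, the only computation is
\[
N-n=3m-F-1-(F-g-|A_1|-|A_2|)=3m+g-2F+|A_1|+|A_2|-1,
\]
together with
\[
n+1-(k-|A_1|)=F-g-|A_2|+1-k.
\]
Substituting these into the variance formula gives exactly the displayed expression. The shift by $F-m$ does not affect the variance.

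There is no real obstacle. The only point needing care is the bookkeeping of index ranges in step 2, namely that the elements of $(F-m)+B$ occupy positions $|A_1|+1$ through $F-g-|A_2|$ in the ordering. This uses $F-m<m+l+1$ and $\max((F-m)+B)\le 2m-1<\min A_2$, both of which follow from $l=F-2m$.
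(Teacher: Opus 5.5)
Your proposal is correct and follows the paper's proof: it uses the bijection of Proposition~\ref{Prop: parameterize C(F,m,g) by frob}, writes $a_k(S)$ as $(F-m)$ plus the $(k-|A_1|)$-th order statistic of $B$, and applies Theorem~\ref{Thm: order statistic}. Your explicit index bookkeeping is more detailed than the paper's one-line version, and your alternative route, specializing Lemma~\ref{Lem: aks depth 3} to $l=F-2m$ (both ambient measures condition to the same uniform distribution on the fibre), is also valid.
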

\begin{proof}
By Proposition~\ref{Prop: parameterize C(F,m,g) by frob}, we know that  numerical semigroups $S$ with $\chi_{g,m,F-2m}(S) = (A_1, A_2)$ are in bijection with $B\subseteq \{1,2,\ldots,3m-F-1\}$ of size $F-g-|A_1|-|A_2|$. Also note that $a_k(S)$ corresponds to the $(k-|A_1|)^{th}$ element of $B$. The result follows from Theorem~\ref{Thm: order statistic}.
\end{proof}

\begin{lemma}\label{Lem: depth 3 fix A1 A2 by Frob}
Assume $\epsilon_1< \frac{1}{40}$ and $F>\frac{1}{\epsilon_1}$.
Suppose we are given $F,g$, and $m$ that satisfy $|m-\frac{1}{2}F|<\epsilon_1 F$ and $|g-\frac{3}{4}F|<\epsilon_1 F$.
Also suppose $(A_1,A_2)\in \Ima(\chi_{g,m,F-2m})$ and $\alpha$ satisfies $3\epsilon_1<\alpha\le \frac{1}{4}-4\epsilon_1$.
Then
\[
\left|\E_{\Frob,F} \left[\Xi_S(\alpha)\mid S\in \cC_{\Frob}(F,g,m), \chi_{g,m,F-2m}(S)=(A_1,A_2) \right]-\left(\frac{1}{2}+2\alpha\right)\right|
< 37\epsilon_1. 
\]
Moreover,
\[
\left|
\E_{\Frob,F} 
[\Xi_S(\alpha)^2\mid S\in \cC_{\Frob}(F,g,m), \chi_{g,m,F-2m}(S)=(A_1,A_2) ] - \left(\frac{1}{2}+2\alpha\right)^2 \right|
< 165\epsilon_1. 
\]
\end{lemma}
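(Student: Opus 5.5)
The plan is to mirror the proof of Lemma~\ref{Lem: depth 3 fix A1 A2}, now using the formulas of Lemma~\ref{Lem: aks depth 3 F}. Set $t=1+\lfloor\alpha(F-1)\rfloor$, so $\Xi_S(\alpha)=a_t(S)/F$. Introduce the scaled quantities
\[
\rho=\frac{m}{F},\quad \kappa=\frac{t}{F},\quad a=\frac{|A_1|}{F},\quad b=\frac{|A_2|}{F},\quad \delta=\frac{F-g}{F},\quad D=\delta-a-b+\frac1F .
\]
The hypotheses give $|\rho-\tfrac12|<\epsilon_1$, $|\delta-\tfrac14|<\epsilon_1$ and $|\kappa-\alpha|\le \tfrac1F<\epsilon_1$. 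Since $A_1\subseteq[m,F-m]$ and $A_2\subseteq[2m,F]$, both sets have size at most $F-2m+1$. Hence $a,b\le 1-2\rho+\tfrac1F<3\epsilon_1$.

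The first step is to check that Lemma~\ref{Lem: aks depth 3 F} applies, that is, $|A_1|<t\le F-g-|A_2|$.
\begin{itemize}
\item The lower inequality follows from $\kappa>\alpha-\epsilon_1>2\epsilon_1$ together with $a\le 1-2\rho+\tfrac1F$. This needs a little care with the constants, and the condition $3\epsilon_1<\alpha$ is exactly designed for it.
\item The upper inequality follows from $\kappa\le\alpha+\epsilon_1\le\tfrac14-3\epsilon_1$ and $\delta-b>\tfrac14-\epsilon_1-3\epsilon_1$.
\end{itemize}

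Dividing the expectation formula by $F$ gives
\[
\E[\Xi_S(\alpha)\mid\cdots]=(1-\rho)+\frac{(3\rho-1)(\kappa-a)}{D}.
\]
The target $\tfrac12+2\alpha$ corresponds to $\rho=\tfrac12$, $a=b=0$ and $\delta=\tfrac14$, since then $(1-\rho)+\tfrac{1/2}{1/4}\,\alpha=\tfrac12+2\alpha$. I will bound the error by the triangle inequality as
\[
|1-\rho-\tfrac12|+2|\kappa-a-\alpha|+\Bigl|\frac{3\rho-1}{D}-2\Bigr|\,|\kappa-a| .
\]
The pieces are handled as follows.
\begin{itemize}
\item The first term is below $\epsilon_1$.
\item The second term is below $2(\epsilon_1+3\epsilon_1)=8\epsilon_1$.
\item For the third term, first get a lower bound $D\ge\tfrac14-\epsilon_1-6\epsilon_1\ge\tfrac18$, using $\epsilon_1<\tfrac1{40}$.
\item Then the numerator satisfies $|(3\rho-1)-2D|\le 3|\rho-\tfrac12|+2|\delta-\tfrac14|+2(a+b)+\tfrac2F$, which is $O(\epsilon_1)$.
\item Finally use $|\kappa-a|\le\tfrac14$.
\end{itemize}
With careful bookkeeping the constants should total below $37\epsilon_1$. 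The numerator estimate is the place where the sharper bounds $a,b\le 1-2\rho+\tfrac1F$ (rather than $3\epsilon_1$) may be needed to fit under $37$.

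For the second moment, divide the variance formula of Lemma~\ref{Lem: aks depth 3 F} by $F^2$. The denominator is at least $F\,D^2(D+\tfrac1F)\ge F/8^3$. Each numerator factor, after scaling by $F$, is bounded by a small absolute constant:
\begin{itemize}
\item $3\rho-1\le\tfrac12+3\epsilon_1$;
\item $3\rho+(1-\delta)-2+a+b$ is at most about $\tfrac14$;
\item $\kappa-a\le\tfrac14$;
\item $\delta-b-\kappa+\tfrac1F\le\tfrac14+2\epsilon_1$.
\end{itemize}
This gives a variance of at most $C/F<C\epsilon_1$ with $C$ an explicit constant, roughly $8^3\cdot\tfrac{1}{2}\cdot\tfrac14\cdot\tfrac14\cdot\tfrac14\approx 4$, and I expect it to be comfortably below $20$. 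Then I conclude exactly as before:
\[
\bigl|\E[\Xi^2]-(\tfrac12+2\alpha)^2\bigr|\le \Var+\bigl|\E[\Xi]-(\tfrac12+2\alpha)\bigr|\cdot\bigl(2+1\bigr)< C\epsilon_1+3\cdot37\epsilon_1 .
\]
This stays below $165\epsilon_1$ provided $C<54$.

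The main obstacle is purely the constant bookkeeping: verifying the index range and fitting the expectation error under $37\epsilon_1$. There the sizes of $A_1$ and $A_2$, which are controlled by $F-2m$, enter both the shift $\kappa-a$ and the denominator $D$.
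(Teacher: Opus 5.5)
Your plan follows the paper's proof: the same scaled quantities, the same formula $(1-\rho)+\frac{(3\rho-1)(\kappa-a)}{D}$ for the conditional mean, the same triangle-inequality split, and the same ``variance plus squared mean'' step. But the lemma asserts explicit constants, so the bookkeeping is the whole content. Yours fails at several points, all caused by the weak estimate $a,b<3\epsilon_1$ that you get from $|A_1|,|A_2|\le F-2m+1$.

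(i) Upper index check: $\kappa\le\frac14-3\epsilon_1$ and $\delta-b>\frac14-4\epsilon_1$ do not imply $\kappa\le\delta-b$.

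(ii) Lower index check: $\kappa>2\epsilon_1$ and $a<3\epsilon_1$ do not imply $a<\kappa$. In fact $|A_1|\le F-2m+1$ alone cannot give $|A_1|<t$. For $F=52$, $m=25$, $\epsilon_1=0.0193$, $\alpha=0.058$, all hypotheses hold and $t=3=F-2m+1$.

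(iii) The claim $D\ge\frac14-7\epsilon_1\ge\frac18$ is false for $\frac1{56}<\epsilon_1<\frac1{40}$. Near $\epsilon_1=\frac1{40}$ you only get $D>\frac3{40}$, yet you use $D\ge\frac18$ in both the mean and the variance estimates.

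(iv) Even granting $D\ge\frac18$, your numerator bound $3\epsilon_1+2\epsilon_1+12\epsilon_1+2\epsilon_1=19\epsilon_1$ makes the third term alone as large as $19\cdot 8\cdot\frac14\,\epsilon_1=38\epsilon_1>37\epsilon_1$.

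The missing observation is one line. Since $m\in S$ and $F\notin S$, also $F-m\notin S$, so $A_1\subseteq[m,F-m-1]$; and $A_2\subseteq[2m,F-1]$. Hence $|A_1|,|A_2|\le F-2m<2\epsilon_1F$, i.e.\ $a,b<2\epsilon_1$, which is the bound the paper uses. With it everything closes exactly as in the paper:
\begin{itemize}
\item $|A_1|<2\epsilon_1F<\alpha(F-1)<t$, and $\kappa<\alpha+\epsilon_1\le\frac14-3\epsilon_1<\delta-b$;
\item $D\ge\frac14-5\epsilon_1>\frac18$;
\item the numerator is below $3\epsilon_1+2\epsilon_1+8\epsilon_1+2\epsilon_1=15\epsilon_1$, so the third term is below $120\epsilon_1\cdot\frac14=30\epsilon_1$;
\item the second term is below $2\epsilon_1+4\epsilon_1=6\epsilon_1$, so the total is below $37\epsilon_1$.
\end{itemize}
Your variance argument then goes through; the paper gets $17\epsilon_1$, and with your factor $3$ any constant below $54$ suffices.

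Your suggested remedy was to exploit $a,b\le 1-2\rho+\frac1F$ through cancellation in the numerator. That would rescue the mean bound: it gives $D>\frac14-6\epsilon_1>\frac1{10}$, a numerator below $9\epsilon_1$, and a total below $32\epsilon_1$. It cannot repair the lower index check (ii), so the sharp cardinality bound is the ingredient genuinely missing from your proposal.
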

\begin{proof}
Let $t=1+\lfloor \alpha(F-1)\rfloor$ so that $\Xi_S(\alpha)=a_t(S)/F$. 
Define the scaled quantities
\[
r=\frac{m}{F},\qquad \delta=\frac{F-g}{F},\qquad 
a=\frac{|A_1|}{F},\qquad b=\frac{|A_2|}{F},\qquad 
\kappa=\frac{t}{F},\qquad D=\delta-a-b+\frac{1}{F}.
\]
From the hypotheses, we have 
\[
|r-\tfrac12|< \epsilon_1,\quad |\delta-\tfrac14|< \epsilon_1,\quad 
0\le a,b<  2\epsilon_1,\quad
|\kappa-\alpha|\le \frac{1}{F}<\epsilon_1.
\]
Using the fact that $\alpha$ satisfies $3\epsilon_1<\alpha\le \frac{1}{4}-4\epsilon_1$, together with the bounds on $a$ and $b$, we see that
$|A_1|<t\le F-g-|A_2|$.  Therefore, we can apply Lemma~\ref{Lem: aks depth 3 F}. 
Moreover, since $\epsilon_1<\tfrac1{40}$ we see that
\[
D=\delta-a-b+\tfrac1F \ge \big(\tfrac14-\epsilon_1\big)- 4\epsilon_1  \ge \tfrac18.
\]

Divide the expectation in Lemma~\ref{Lem: aks depth 3 F} by $F$ to get
\[
\E_{\Frob,F}\left[\Xi_S(\alpha)\mid S\in\cC_{\Frob}(F,g,m),  \chi_{g,m,F-2m}(S)=(A_1,A_2)\right]
=(1-r)+\frac{(3r-1)(\kappa-a)}{D}.
\]
Therefore,
\begin{align*}
&\ \Big| \E_{\Frob,F}\left[\Xi_S(\alpha)\mid S\in\cC_{\Frob}(F,g,m),  \chi_{g,m,F-2m}(S)=(A_1,A_2)\right] - (\tfrac12+2\alpha) \Big|\\
\leq & \ |(1-r)-\tfrac12| 
+ \Big|\Big(\tfrac{3r-1}{D}-2\Big)(\kappa-a)\Big|
+ 2\,|\kappa-\alpha| + 2a. 
\end{align*}

We bound the four terms in this sum.
\begin{itemize}
\item Recall that $|(1-r)-\tfrac12|=|r-\tfrac12|< \epsilon_1$.
\item We have $|\kappa-\alpha|< \epsilon_1$ and $a< 2\epsilon_1$.  Therefore, $2|\kappa-\alpha|+2a< 6\epsilon_1$.
\item Note that
\[
|\kappa-a|\leq \kappa+ a<(\alpha+\epsilon_1) +2\epsilon_1 <\tfrac{1}{4}.
\]
Next,
\begin{align*}
\Big|\tfrac{3r-1}{D}-2\Big|
&=\frac{\big|(3r-1)-2(\delta-a-b+\tfrac1F)\big|}{D}
\leq \frac{|3r-\frac{3}{2}| + |\frac{1}{2} -2\delta|+2a+2b+\tfrac{2}{F}}{\frac{1}{8}}\\
&< 8 \big( 3\epsilon_1 + 2\epsilon_1 +4\epsilon_1+4\epsilon_1+2\epsilon_1\big) =120\epsilon_1.
\end{align*}
We conclude that $\big|\big(\tfrac{3r-1}{D}-2\big)(\kappa-a)\big| < 30\epsilon_1$.
\end{itemize}
Summing the three bounds yields
\[
\Bigl|\E_{\Frob,F}\left[\Xi_S(\alpha)\mid \cC_{\Frob}(F,g,m),  \chi_{g,m,F-2m}\right]-\Big(\tfrac12+2\alpha\Big)\Bigr|
\le \epsilon_1+6\epsilon_1+30\epsilon_1 = 37\epsilon_1.
\]

Next, to compute the second moment, we divide the variance in Lemma~\ref{Lem: aks depth 3 F} by $F^2$ to get
\begin{align*}
&\ \Var_{\Frob,F}[\Xi_S(\alpha) \mid  \cC_{\Frob}(F,g,m), \chi_{g,m,F-2m}] \\
= &\ \frac{1}{F} \frac{(3r-1)(3r -\delta -1 +a+b-\frac{1}{F})(\kappa-a)(\delta-b+\frac{1}{F}-\kappa)}{D^2 (D+\frac{1}{F})}\\
< &\  \epsilon_1 \frac{(\tfrac{1}{2}+3\epsilon_1)((\frac{3}{2}+3\epsilon_1)   -1 +2\epsilon_1+2\epsilon_1)(\alpha+\epsilon_1)(\frac{1}{4}+\epsilon_1+\epsilon_1)}{(\tfrac{1}{8})^3}\\
< &\ 17\epsilon_1.
\end{align*}
In the last step, we used the fact that $\alpha<\frac{1}{4}$ and $\epsilon_1<\frac{1}{40}$.

Therefore,
\begin{align*}
&\ \Bigl|\E_{\Frob,F}\big[\Xi_S(\alpha)^2\mid \cC_{\Frob}(F,g,m), \chi_{g,m,F-2m}\big]-\Big(\tfrac12+2\alpha\Big)^2\Bigr|\\
\leq &\  \Var_{\Frob,F}\big(\Xi_S(\alpha)\mid \cC_{\Frob}(F,g,m), \chi_{g,m,F-2m}\big)\\
& + \Bigl|\E_{\Frob,F}\big[\Xi_S(\alpha)\mid \cC_{\Frob}(F,g,m), \chi_{g,m,F-2m}\big]^2 -\Big(\tfrac12+2\alpha\Big)^2\Bigr|\\
< &\ 17 \epsilon_1
+ \Bigl|\E_{\Frob,F}\big[\Xi_S(\alpha)\mid \cC_{\Frob}(F,g,m), \chi_{g,m,F-2m}\big]-\Big(\tfrac12+2\alpha\Big)\Bigr| \\
&\ \cdot\Bigl|\E_{\Frob,F}\big[\Xi_S(\alpha)\mid\cC_{\Frob}(F,g,m), \chi_{g,m,F-2m}\big]+\Big(\tfrac12+2\alpha\Big)\Bigr|.
\end{align*}
Since $\Xi_S(\alpha)\in[0,2]$, the absolute value of the second term in the final summand is at most $4$. Using the bound on the expectation that we proved earlier, we conclude that
\[
\Bigl|\E_{\Frob,F}\big[\Xi_S(\alpha)^2\mid \cC_{\Frob}(F,g,m), \chi_{g,m,F-2m}\big]-\Big(\tfrac12+2\alpha\Big)^2\Bigr|
< 17 \epsilon_1 + 4\cdot 37\epsilon_1 = 165\epsilon_1.
\]
This completes the proof.
\end{proof}

\begin{corollary}\label{Cor: depth 3 fix gml by Frob}
Assume $\epsilon_1< \frac{1}{40}$ and $F>\frac{1}{\epsilon_1}$.
Suppose we are given $F,g$, and $m$ that satisfy $|m-\frac{1}{2}F|<\epsilon_1 F$ and $|g-\frac{3}{4}F|<\epsilon_1 F$ and that $\alpha$ satisfies $3\epsilon_1<\alpha\le \frac{1}{4}-4\epsilon_1$.
Then
\[
\left|\E_{\Frob,F} \big[\Xi_S(\alpha)\mid S\in \cC_{\Frob}(F,g,m) \big]-\left(\tfrac{1}{2}+2\alpha\right)\right|
< 37\epsilon_1. 
\]
Moreover,
\[
\left|\E_{\Frob,F} \big[\Xi_S(\alpha)^2\mid S\in \cC_{\Frob}(F,g,m) \big]-\left(\tfrac{1}{2}+2\alpha\right)^2\right|
< 165\epsilon_1. 
\]    
\end{corollary}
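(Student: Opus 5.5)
The plan is to mirror the proof of Corollary~\ref{Cor: depth 3 fix gml}: apply the law of total expectation over the fiber decomposition of $\cC_{\Frob}(F,g,m)$ given by the map $\chi_{g,m,F-2m}$. Every $S\in\cC_{\Frob}(F,g,m)$ has a well-defined pair $\chi_{g,m,F-2m}(S)=(A_1,A_2)$. This pair lies in $\Ima(\chi_{g,m,F-2m})$, so the sets
\[
\{S\in \cC_{\Frob}(F,g,m) : \chi_{g,m,F-2m}(S)=(A_1,A_2)\}
\]
partition $\cC_{\Frob}(F,g,m)$ into nonempty pieces. Therefore
\[
\E_{\Frob,F}\big[\Xi_S(\alpha)\mid S\in\cC_{\Frob}(F,g,m)\big]
=\sum_{(A_1,A_2)}\E_{\Frob,F}\big[\Xi_S(\alpha)\mid S\in\cC_{\Frob}(F,g,m),\chi_{g,m,F-2m}(S)=(A_1,A_2)\big]\,p_{(A_1,A_2)},
\]
where $p_{(A_1,A_2)}=\P_{\Frob,F}[\chi_{g,m,F-2m}(S)=(A_1,A_2)\mid S\in \cC_{\Frob}(F,g,m)]$ and these weights sum to $1$. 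The same identity holds with $\Xi_S(\alpha)^2$ in place of $\Xi_S(\alpha)$. We may assume $\cC_{\Frob}(F,g,m)$ is nonempty, since otherwise the conditional expectation is vacuous.

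Next, subtract $\tfrac12+2\alpha$, respectively $(\tfrac12+2\alpha)^2$, from both sides. Since the weights sum to $1$, the difference becomes a convex combination of the fiberwise differences, and the triangle inequality bounds it by a convex combination of their absolute values. The hypotheses of the corollary are exactly those of Lemma~\ref{Lem: depth 3 fix A1 A2 by Frob}: $\epsilon_1<\frac1{40}$, $F>\frac1{\epsilon_1}$, the bounds on $m$ and $g$, and the range $3\epsilon_1<\alpha\le\frac14-4\epsilon_1$. They hold uniformly for every $(A_1,A_2)$ in the image. So each fiberwise difference is strictly less than $37\epsilon_1$, respectively $165\epsilon_1$, and a convex combination of quantities each strictly below a constant is strictly below that constant. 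This gives both inequalities.

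There is essentially no obstacle here. The only point to check is that the bounds in Lemma~\ref{Lem: depth 3 fix A1 A2 by Frob} are uniform in $(A_1,A_2)$, with no extra condition on the fiber beyond membership in the image of $\chi_{g,m,F-2m}$. The lemma's proof confirms this: it only uses $|A_1|,|A_2|\le F-2m<2\epsilon_1F$, which holds for every fiber.
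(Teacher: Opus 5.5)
Your proposal is correct and follows the paper's proof. The paper also conditions on the fibers of $\chi_{g,m,F-2m}$, applies Lemma~\ref{Lem: depth 3 fix A1 A2 by Frob} uniformly to every $(A_1,A_2)$ in the image, and notes that averaging preserves both the first- and second-moment bounds; your explicit convex-combination and triangle-inequality step, and your check that $|A_1|,|A_2|\le F-2m$, fill in details the paper leaves implicit.
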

\begin{proof}
By the law of total expectation, we can average over the $(A_1,A_2)$ in the image of $\chi_{g,m,F-2m}$. This shows 
\[
\E_{\Frob,F}\big[\Xi_S(\alpha)\mid S\in \cC_{\Frob}(F,g,m)\big]
=\E_{\Frob,F}\big[\E_{\Frob,F}\big[\Xi_S(\alpha)\mid S\in \cC_{\Frob}(F,g,m),\chi_{g,m,F-2m}\big]\big].
\]

From Lemma~\ref{Lem: depth 3 fix A1 A2 by Frob}, we have
\[
\Bigl|\E_{\Frob,F} \big[\Xi_S(\alpha)\mid S\in \cC_{\Frob}(F,g,m),\chi_{g,m,F-2m}(S)=(A_1,A_2)\big]
-(\tfrac{1}{2}+2\alpha)\Bigr|< 37\epsilon_1
\]
for every $(A_1,A_2)\in \operatorname{Im}(\chi_{g,m,F-2m})$. Averaging over $\chi_{g,m,F-2m}$ preserves
this bound.  This proves the first claim. The proof of the second claim is identical.
\end{proof}

We again use the law of total expectation, this time averaging over different $m,g$ in order to obtain the expectations conditional on $S\in\cC_{\Frob}(F)$.

\begin{proposition}\label{Prop: small alpha depth 3 by Frob}
Suppose that $0<\alpha<\frac{1}{4}$.
Then we have
\[
\lim_{F\to\infty} \E_{\Frob,F} \big[\Xi_S(\alpha)\mid S\in \cC_{\Frob}(F)\big] = \tfrac{1}{2}+2\alpha. 
\]
Moreover,
\[
\lim_{F\to\infty} \E_{\Frob,F} \big[\Xi_S(\alpha)^2\mid S\in \cC_{\Frob}(F)\big] = (\tfrac{1}{2}+2\alpha)^2. 
\]
\end{proposition}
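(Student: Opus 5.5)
The plan is to mirror the proof of Proposition~\ref{Prop: small alpha depth 3}: average the conditional bounds of Corollary~\ref{Cor: depth 3 fix gml by Frob} over the pairs $(g,m)$ using the law of total expectation, and handle the atypical pairs with the trivial bound $\Xi_S(\alpha)\in[0,2]$. First fix $\epsilon_1$ with
\[
0<\epsilon_1<\min\left(\tfrac{1}{40},\ \tfrac{\alpha}{3},\ \tfrac{1}{4}\left(\tfrac14-\alpha\right)\right),
\]
so that $3\epsilon_1<\alpha\le\frac14-4\epsilon_1$. Then take $F>1/\epsilon_1$. Define the good event
\[
G_{\epsilon_1}=\Big\{\big|m(S)-\tfrac12F\big|<\epsilon_1F\Big\}\cap\Big\{\big|g(S)-\tfrac34F\big|<\epsilon_1F\Big\}.
\]

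Decompose $\cC_{\Frob}(F)$ as the disjoint union of the sets $\cC_{\Frob}(F,g,m)$ and write
\[
\E_{\Frob,F}[\Xi_S(\alpha)\mid S\in\cC_{\Frob}(F)]=\sum_{g,m}\E_{\Frob,F}[\Xi_S(\alpha)\mid S\in\cC_{\Frob}(F,g,m)]\,\P_{\Frob,F}(S\in\cC_{\Frob}(F,g,m)\mid S\in\cC_{\Frob}(F)).
\]
For pairs $(g,m)$ lying in $G_{\epsilon_1}$, Corollary~\ref{Cor: depth 3 fix gml by Frob} bounds the deviation of the conditional mean from $\frac12+2\alpha$ by $37\epsilon_1$. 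For the remaining pairs, the deviation is at most $2$. This gives
\[
\Big|\E_{\Frob,F}[\Xi_S(\alpha)\mid \cC_{\Frob}(F)]-\big(\tfrac12+2\alpha\big)\Big|\le 37\epsilon_1+2\,\P_{\Frob,F}(G_{\epsilon_1}^c\mid S\in\cC_{\Frob}(F)).
\]
The second moment is handled the same way, with constants $165\epsilon_1$ and $4$.

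The main obstacle is showing that $\P_{\Frob,F}(G_{\epsilon_1}^c\mid S\in\cC_{\Frob}(F))\to0$. Theorem~\ref{thm: backelin m=F/2} and Theorem~\ref{thm: singhal genus by frob} only give unconditional statements. I would use the bound
\[
\P(G^c\mid \cC_{\Frob}(F))\le \frac{\P_{\Frob,F}(G^c)}{\P_{\Frob,F}(S\in\cC_{\Frob}(F))}.
\]
This reduces the problem to showing that $\P_{\Frob,F}(\cC_{\Frob}(F))$ is bounded below by a positive constant for large $F$.

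I would get this lower bound by comparison with depth $2$. By Proposition~\ref{Prop: depth 2 by frob}, $|\B_{\Frob}(F)|\le 2^{F/2}$. On the other hand, Proposition~\ref{Prop: parameterize C(F,m,g) by frob} with, say, $m=\lfloor F/2\rfloor-1$ produces $\gg 2^{F/2}$ depth-$3$ semigroups, since $A_1$ and $A_2$ are freely choosable within a closed family of bounded size. Combined with Backelin's asymptotic $N_{\Frob}(F)\asymp\sqrt2^F$, this gives the positive lower bound.

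If that comparison turns out to be delicate, there is a fallback. The case where $\P_{\Frob,F}(\cC_{\Frob}(F))$ is small along a subsequence can be absorbed later when combining with depth $2$, since only the weighted sum matters there. With the lower bound in hand, taking $\limsup_{F\to\infty}$ and then letting $\epsilon_1\to0$ completes both limits.
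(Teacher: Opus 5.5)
Your proof follows the paper's proof: the same choice of $\epsilon_1$, the same event $G_{\epsilon_1}$, the decomposition of $\cC_{\Frob}(F)$ over pairs $(g,m)$, Corollary~\ref{Cor: depth 3 fix gml by Frob} on the good pairs, and the trivial bounds $2$ and $4$ on the rest. You differ at the step $\P_{\Frob,F}(G_{\epsilon_1}^c\mid S\in\cC_{\Frob}(F))\to 0$. The paper asserts it directly from Theorem~\ref{thm: backelin m=F/2} and Theorem~\ref{thm: singhal genus by frob}, which are unconditional statements. You correctly point out that passing to the conditional probability needs $\liminf_{F\to\infty}\P_{\Frob,F}(S\in\cC_{\Frob}(F))>0$, and you prove it. So your version supplies a step the paper leaves implicit.

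Your lower bound is correct, but the exponential count comes from the free set $B$, not from the choices of $A_1,A_2$. Take $m=\lfloor F/2\rfloor-1$ and $F>9$. Then every set $\{0,m,2m\}\cup((F-m)+B)\cup\{F+1\rightarrow\}$ with $B\subseteq\{1,\dots,3m-F-1\}$ is a numerical semigroup with multiplicity $m$ and Frobenius number $F$. Indeed, every sum of two nonzero elements other than $m+m=2m$ exceeds $F$. This gives $|\cC_{\Frob}(F)|\ge 2^{3m-F-1}\ge 2^{F/2-6}$. Backelin's theorem gives $N_{\Frob}(F)\le C\sqrt{2}^{F}$, hence $\P_{\Frob,F}(S\in\cC_{\Frob}(F))\ge 2^{-6}/C$.

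The comparison with $|\B_{\Frob}(F)|$ is not needed. Your fallback would not suffice for the statement as written, since the proposition concerns conditional expectations. With the lower bound established, neither point matters.
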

\begin{proof}
Choose $\epsilon_1$ satisfying
\[
0<\epsilon_1<\min\left(\tfrac{1}{40},\tfrac{\alpha}{3},\tfrac{1}{4}(\tfrac{1}{4}-\alpha)\right),
\]
and take $F>1/\epsilon_1$. Consider the event
\[G_{\epsilon_1}=\big\{|m(S)-\tfrac{1}{2} F|<\epsilon_1 F\big\}\cap\big\{|g(S)-\tfrac{3}{4} F|<\epsilon_1 F\big\}.\]
By Theorem~\ref{thm: backelin m=F/2} and Theorem~\ref{thm: singhal genus by frob}, we know that
$\lim_{F\to\infty} \P_{\Frob,F}[G_{\epsilon_1}\mid S\in\cC_{\Frob}(F)]=1$.

We divide up $\cC_{\Frob}(F)$ by the pair $(m,g)$ and see that 
\begin{align*}
&\ \E_{\Frob,F}\big[\Xi_S(\alpha)\mid S\in\cC_{\Frob}(F)\big]\\
= &\ \sum_{m,g}\E_{\Frob,F}\big[\Xi_S(\alpha)\mid S\in\cC_{\Frob}(F,g,m)\big]
\P\big(S\in\cC_{\Frob}(F,g,m)\mid S\in\cC_{\Frob}(F)\big).
\end{align*}
For $(m,g)$ corresponding to $G_{\epsilon_1}$, Corollary~\ref{Cor: depth 3 fix gml by Frob} gives
\[
\Bigl|\E_{\Frob,F}\big[\Xi_S(\alpha)\mid S\in\cC_{\Frob}(F,g,m)\big]-(\tfrac{1}{2}+2\alpha)\Bigr|
< 37\epsilon_1.
\]
For $(m,g)$ corresponding to the complement $G_{\epsilon_1}^c$, we use the trivial bound
\[
\big|\E_{\Frob,F}[\Xi_S(\alpha)\mid S\in\cC_{\Frob}(F,g,m)]-(\tfrac{1}{2}+2\alpha)\big|\leq 2,
\]
which holds because $\Xi_S(\alpha)\in[0,2]$.
Therefore
\begin{align*}
&\ \Big|\E_{\Frob,F}[\Xi_S(\alpha)\mid S\in\cC_{\Frob}(F)]-(\tfrac{1}{2}+2\alpha)\Big|\\
<&\  37\epsilon_1\ \P(G_{\epsilon_1}\mid S\in\cC_{\Frob}(F))
+2\P(G_{\epsilon_1}^c\mid S\in\cC_{\Frob}(F)).
\end{align*}
Taking $\limsup_{F\to\infty}$ yields
\[
\limsup_{F\to\infty}\Big|\E_{\Frob,F}[\Xi_S(\alpha)\mid S\in\cC_{\Frob}(F)]-(\tfrac{1}{2}+2\alpha)\Big|
\leq 37\epsilon_1.
\]
Since $\epsilon_1$ can be arbitrarily small, we conclude that
\[\lim_{F\to\infty}\E_{\Frob,F}[\Xi_S(\alpha)\mid S\in\cC_{\Frob}(F)]=\tfrac{1}{2}+2\alpha.\]

A similar argument shows that
\begin{align*}
&\ \Big|\E_{\Frob,F}[\Xi_S(\alpha)^2\mid S\in\cC_{\Frob}(F)]-(\tfrac{1}{2}+2\alpha)^2\Big|\\
\leq &\ 165\epsilon_1\ \P(G_{\epsilon_1}\mid S\in\cC_{\Frob}(F))
+4\P(G_{\epsilon_1}^c\mid S\in \cC_{\Frob}(F)).
\end{align*}
Taking $\limsup_{F\to\infty}$ yields
\[
\limsup_{F\to\infty}\Big|\E_{\Frob,F}[\Xi_S(\alpha)^2\mid S\in\cC_{\Frob}(F)]-(\tfrac{1}{2}+2\alpha)^2\Big|
\leq 165\epsilon_1.\]
Since $\epsilon_1$ can be arbitrarily small, we conclude that
\[
\lim_{F\to\infty}\E_{\Frob,F}[\Xi_S(\alpha)^2\mid S\in\cC_{\Frob}(F)]=(\tfrac{1}{2}+2\alpha)^2.\qedhere
\]
\end{proof}

\subsection{Concluding $0<\alpha<\frac{1}{4}$}

We combine Proposition~\ref{Prop: small alpha depth 2 by frob} and Proposition~\ref{Prop: small alpha depth 3 by Frob} to obtain the unconditional expectations.

\begin{proposition}\label{Prop: small alpha by Frob}
Suppose that $0<\alpha<\frac{1}{4}$.
Then we have
\[\lim_{F\to\infty} \E_{\Frob,F} \big[\Xi_S(\alpha)\big] = \tfrac{1}{2}+2\alpha. \]
Moreover,
\[\lim_{F\to\infty} \E_{\Frob,F} \big[\Xi_S(\alpha)^2\big] = (\tfrac{1}{2}+2\alpha)^2. \]
\end{proposition}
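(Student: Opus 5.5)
We will follow exactly the proof of Proposition~\ref{Prop: small alpha}, replacing the genus-ordered statements by their Frobenius-ordered analogues. By the law of total expectation, we split $\S_{\Frob}(F)$ into the three disjoint pieces $\B_{\Frob}(F)$, $\cC_{\Frob}(F)$, and the remainder $\S_{\Frob}(F)\setminus(\B_{\Frob}(F)\cup\cC_{\Frob}(F))$. Then
\begin{align*}
\E_{\Frob,F}[\Xi_S(\alpha)] = &\ \P_{\Frob,F}[S\in\B_{\Frob}(F)]\,\E_{\Frob,F}[\Xi_S(\alpha)\mid S\in\B_{\Frob}(F)]\\
&+\P_{\Frob,F}[S\in\cC_{\Frob}(F)]\,\E_{\Frob,F}[\Xi_S(\alpha)\mid S\in\cC_{\Frob}(F)]\\
&+\P_{\Frob,F}[S\notin\B_{\Frob}(F)\cup\cC_{\Frob}(F)]\,\E_{\Frob,F}[\Xi_S(\alpha)\mid S\notin\B_{\Frob}(F)\cup\cC_{\Frob}(F)].
\end{align*}
The analogous identity holds for $\Xi_S(\alpha)^2$.

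The first two conditional expectations converge to $\tfrac12+2\alpha$ by Proposition~\ref{Prop: small alpha depth 2 by frob} and Proposition~\ref{Prop: small alpha depth 3 by Frob}. The corresponding second moments converge to $(\tfrac12+2\alpha)^2$ by the same two propositions. For the remainder term we use the trivial bounds $0\le\Xi_S(\alpha)\le 2$ and $0\le \Xi_S(\alpha)^2\le 4$. This bounds the last summand by $2$ (respectively $4$) times $\P_{\Frob,F}[S\notin\B_{\Frob}(F)\cup\cC_{\Frob}(F)]$.

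The one point needing justification is that this last probability tends to $0$. We derive it from Theorem~\ref{thm: backelin m=F/2}. Fix $\epsilon>0$ and take the corresponding $M$. If $|m(S)-F/2|\le M$ and $F>6M$, then $3m(S)\ge \tfrac32F-3M>F$. So $F<3m(S)$, and $S$ has depth at most $3$. The boundary cases $F=2m$ and $F=3m$ need a check. Equality $F=3m$ is excluded by the same inequality. The case $F=2m(S)$ can never occur, because $2m(S)\in S$ while $F(S)\notin S$. Hence every such $S$ lies in $\B_{\Frob}(F)\cup\cC_{\Frob}(F)$, and
\[
\limsup_{F\to\infty}\P_{\Frob,F}[S\notin\B_{\Frob}(F)\cup\cC_{\Frob}(F)]\le\epsilon
\]
for every $\epsilon>0$. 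Consequently $\P_{\Frob,F}[S\in\B_{\Frob}(F)]+\P_{\Frob,F}[S\in\cC_{\Frob}(F)]\to1$.

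Combining these facts, we get
\[
\E_{\Frob,F}[\Xi_S(\alpha)]-(\tfrac12+2\alpha)\bigl(\P_{\Frob,F}[\B_{\Frob}(F)]+\P_{\Frob,F}[\cC_{\Frob}(F)]\bigr)\to0.
\]
This holds because each conditional expectation differs from its limit by an amount tending to $0$, weighted by a probability at most $1$. Since the bracketed sum of probabilities tends to $1$, both claimed limits follow. The second moment is handled identically with the bound $4$.

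There is a subtlety in applying the depth-2 and depth-3 propositions. If one of $\B_{\Frob}(F)$ or $\cC_{\Frob}(F)$ had vanishing probability along some subsequence, this would cause no harm: the conditional expectations are bounded, and the two propositions state their limits unconditionally in $F$. We expect no real obstacle here; the only step requiring care is the depth argument above.
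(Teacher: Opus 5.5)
Your proof is correct and follows the paper's argument: the same three-way split into $\B_{\Frob}(F)$, $\cC_{\Frob}(F)$ and the remainder, the trivial bounds $2$ and $4$ on the remainder, and Propositions~\ref{Prop: small alpha depth 2 by frob} and~\ref{Prop: small alpha depth 3 by Frob} for the two main pieces. Your derivation that $\P_{\Frob,F}[S\notin\B_{\Frob}(F)\cup\cC_{\Frob}(F)]\to 0$ from Theorem~\ref{thm: backelin m=F/2} (using $3m(S)>F$ once $F>6M$, and $F(S)\neq 2m(S)$) spells out a step the paper only asserts.
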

\begin{proof}
Notice that
\begin{align*}
&\ \lim_{F\to\infty} \E_{\Frob,F} \big[\Xi_S(\alpha)\big]\\ 
= &\ \lim_{F\to\infty} \P_{\Frob,F}[S\in \B_{\Frob}(F)] \E_{\Frob,F} \big[\Xi_S(\alpha) \mid S\in \B_{\Frob}(F)\big]\\
& +\lim_{F\to\infty}\P_{\Frob,F}[S\in \cC_{\Frob}(F)] \E_{\Frob,F} \big[\Xi_S(\alpha) \mid S\in \cC_{\Frob}(F)\big]\\
& +\lim_{F\to\infty}\P_{\Frob,F}[S\notin \B_{\Frob}(F)\cup\cC_{\Frob}(F)] \E_{\Frob,F} \big[\Xi_S(\alpha) \mid S\notin \B_{\Frob}(F)\cup\cC_{\Frob}(F)\big].
\end{align*}
For the last term, note that
\begin{align*}
&\ \limsup_{F\to\infty}\P_{\Frob,F}[S\notin \B_{\Frob}(F)\cup\cC_{\Frob}(F)] \E_{\Frob,F} \big[\Xi_S(\alpha) \mid S\notin \B_{\Frob}(F)\cup\cC_{\Frob}(F)\big]\\
\leq &\ \lim_{F\to\infty}\P_{\Frob,F}[S\notin \B_{\Frob}(F)\cup\cC_{\Frob}(F)] \cdot 2.
\end{align*}
Theorem \ref{thm: backelin m=F/2} implies that 
\[
\lim_{F\to\infty}\P_{\Frob,F}[S\notin \B_{\Frob}(F)\cup\cC_{\Frob}(F)] = 0.
\]
Therefore, Proposition~\ref{Prop: small alpha depth 2 by frob} and Proposition~\ref{Prop: small alpha depth 3 by Frob} imply that 
\begin{align*}
&\ \lim_{F\to\infty} \E_{\Frob,F} \big[\Xi_S(\alpha)\big]\\
= &\  (\tfrac{1}{2}+2\alpha ) \left(\lim_{F\to\infty} \P_{\Frob,F}[S\in \B_{\Frob}(F)] 
+\P_{\Frob,F}[S\in \cC_{\Frob}(F)] \right)
=(\tfrac{1}{2}+2\alpha ).
\end{align*}

Similarly, we have
\begin{align*}
&\ \lim_{F\to\infty} \E_{\Frob,F} \big[\Xi_S(\alpha)^2\big]\\ 
= &\  \lim_{F\to\infty} \P_{\Frob,F}[S\in \B_{\Frob}(F)] \E_{\Frob,F} \big[\Xi_S(\alpha)^2 \mid S\in \B_{\Frob}(F)\big]\\
& +\lim_{F\to\infty}\P_{\Frob,F}[S\in \cC_{\Frob}(F)] \E_{\Frob,F} \big[\Xi_S(\alpha)^2 \mid S\in \cC_{\Frob}(F)\big]\\
& +\lim_{F\to\infty}\P_{\Frob,F}[S\notin \B_{\Frob}(F)\cup\cC_{\Frob}(F)] \E_{\Frob,F} \big[\Xi_S(\alpha)^2 \mid S\notin \B_{\Frob}(F)\cup\cC_{\Frob}(F)\big].
\end{align*}
For the last term, note that
\begin{align*}
&\ \limsup_{F\to\infty}\P_{\Frob,F}[S\notin \B_{\Frob}(F)\cup\cC_{\Frob}(F)] \E_{\Frob,F} \big[\Xi_S(\alpha)^2 \mid S\notin \B_{\Frob}(F)\cup\cC_{\Frob}(F)\big]\\
\leq &\  \lim_{F\to\infty}\P_{\Frob,F}[S\notin \B_{\Frob}(F)\cup\cC_{\Frob}(F)] \cdot 4 =0.
\end{align*}
Therefore, Proposition~\ref{Prop: small alpha depth 2 by frob} and Proposition~\ref{Prop: small alpha depth 3 by Frob} imply that 
\begin{align*}
&\ \lim_{F\to\infty} \E_{\Frob,F} \big[\Xi_S(\alpha)^2\big]\\
= &\  (\tfrac{1}{2}+2\alpha )^2 \left(\lim_{F\to\infty} \P_{\Frob,F}[S\in \B_{\Frob}(F)] 
+\P_{\Frob,F}[S\in \cC_{\Frob}(F)] \right)
=(\tfrac{1}{2}+2\alpha )^2.\qedhere
\end{align*}
\end{proof}

We combine these expectation results to show that $\Xi_S(\alpha)$ is concentrating around~$\Xi(\alpha)$.

\begin{corollary}\label{Cor: small alpha exp concentration by Frob}
Suppose that $0<\alpha<\frac{1}{4}$.
Then we have
\[\lim_{F\to\infty} \E_{\Frob,F} \big[ \big(\Xi_S(\alpha) -(\tfrac12+2\alpha) \big)^2\big] = 0. \]
\end{corollary}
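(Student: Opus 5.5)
The corollary follows from Proposition~\ref{Prop: small alpha by Frob} by expanding the square, exactly as in the proof of Corollary~\ref{Cor: small alpha exp concentration}. Write $c=\tfrac12+2\alpha$, a constant that does not depend on $F$ or $S$. By linearity of expectation,
\[
\E_{\Frob,F}\big[(\Xi_S(\alpha)-c)^2\big]=\E_{\Frob,F}\big[\Xi_S(\alpha)^2\big]-2c\,\E_{\Frob,F}\big[\Xi_S(\alpha)\big]+c^2 .
\]
This identity holds for every $F$ with no integrability concerns, since $\Xi_S(\alpha)\in[0,2]$ is bounded and $\S_{\Frob}(F)$ is finite.

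Next I would let $F\to\infty$ and take limits term by term. This is legitimate because, by Proposition~\ref{Prop: small alpha by Frob}, each of the two limits $\lim_{F\to\infty}\E_{\Frob,F}[\Xi_S(\alpha)]=c$ and $\lim_{F\to\infty}\E_{\Frob,F}[\Xi_S(\alpha)^2]=c^2$ exists, and that proposition applies because $0<\alpha<\frac14$. The right-hand side then tends to $c^2-2c\cdot c+c^2=0$, which is the claim.

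I do not expect a real obstacle here; all the analytic work (the order-statistic estimates of Lemma~\ref{Lem: aks depth 3 F}, the depth-$2$ and depth-$3$ averaging, and discarding depth at least $4$ using Theorem~\ref{thm: backelin m=F/2}) is already packaged in Proposition~\ref{Prop: small alpha by Frob}. The one point worth stating is that the limits must be taken separately and then combined, which is valid precisely because both limits exist.

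The corollary is then used with Chebyshev's inequality, as in Corollary~\ref{Cor: pointwise small alpha}, to obtain the case $0<\alpha<\frac14$ of Theorem~\ref{Thm: Frob pointwise convergence}.
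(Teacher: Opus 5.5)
Your proposal is correct and matches the paper's proof: expand $\E_{\Frob,F}\big[(\Xi_S(\alpha)-c)^2\big]$ by linearity, then pass to the limit in each term using the two limits from Proposition~\ref{Prop: small alpha by Frob}, giving $c^2-2c\cdot c+c^2=0$. Your remark that the limits can be combined because each one exists is exactly the justification the paper's computation relies on implicitly.
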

\begin{proof}
Note that
\begin{align*}
&\ \lim_{F\to\infty} \E_{\Frob,F} \big[ \big(\Xi_S(\alpha) -(\tfrac12+2\alpha) \big)^2\big]\\
=&\ \lim_{F\to\infty} \E_{\Frob,F} \big[ \Xi_S(\alpha)^2\big]
-2(\tfrac12+2\alpha)\lim_{F\to\infty} \E_{\Frob,F} \big[ \Xi_S(\alpha) \big]
+ (\tfrac12+2\alpha)^2\\
=&\ (\tfrac12+2\alpha)^2
-2(\tfrac12+2\alpha)(\tfrac12+2\alpha)
+ (\tfrac12+2\alpha)^2 = 0. \qedhere
\end{align*}
\end{proof}

Finally, we can complete the case of Theorem~\ref{Thm: Frob pointwise convergence} where $0<\alpha<\frac{1}{4}$ using Chebyshev's inequality.

\begin{corollary}\label{Cor: pointwise small alpha by Frob}
Given $0<\alpha<\frac{1}{4}$ and $\epsilon>0$, we have
\[
\lim_{F\to\infty} \P_{\Frob,F}[ |\Xi_S(\alpha) -\Xi(\alpha)|<\epsilon ] =1.
\]
\end{corollary}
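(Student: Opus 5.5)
The plan mirrors the proof of Corollary~\ref{Cor: pointwise small alpha}. Fix $0<\alpha<\frac14$ and $\epsilon>0$. In this range the definition of $\Xi$ gives $\Xi(\alpha)=\frac12+2\alpha$. We bound the probability of the complementary event $|\Xi_S(\alpha)-\Xi(\alpha)|\ge\epsilon$ using Chebyshev's inequality, in its second-moment (Markov) form about the constant $\Xi(\alpha)$ rather than about the mean:
\[
\P_{\Frob,F}\big(|\Xi_S(\alpha)-\Xi(\alpha)|\geq \epsilon\big)
\leq \frac{\E_{\Frob,F}\big[(\Xi_S(\alpha)-(\tfrac12+2\alpha))^2\big]}{\epsilon^2}.
\]
This inequality follows by applying Markov's inequality to the nonnegative random variable $(\Xi_S(\alpha)-\Xi(\alpha))^2$. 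It is legitimate because $\Xi_S(\alpha)\in[0,2]$, so all moments are finite.

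Next, invoke Corollary~\ref{Cor: small alpha exp concentration by Frob}. It states that the numerator on the right-hand side tends to $0$ as $F\to\infty$. Since $\epsilon$ is fixed, the whole right-hand side tends to $0$. Hence
\[
\P_{\Frob,F}\big(|\Xi_S(\alpha)-\Xi(\alpha)|<\epsilon\big)\to 1,
\]
which is the claim.

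There is no real obstacle here. All the substantive work was already done in Propositions~\ref{Prop: small alpha depth 2 by frob}, \ref{Prop: small alpha depth 3 by Frob} and \ref{Prop: small alpha by Frob}. Those results handle the decomposition into depth $2$ and depth $3$ semigroups, the order-statistic computations, and the concentration of $m(S)$ and $g(S)$. The only points to check are that $\Xi(\alpha)$ really equals $\frac12+2\alpha$ on the open interval $(0,\frac14)$, and that the strict versus non-strict inequality in the event is immaterial.
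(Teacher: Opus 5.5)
Your proposal is correct and matches the paper's proof essentially verbatim. The paper also applies Chebyshev's inequality in the second-moment form $\P_{\Frob,F}\big(|\Xi_S(\alpha)-\Xi(\alpha)|\geq\epsilon\big)\leq \E_{\Frob,F}\big[(\Xi_S(\alpha)-\Xi(\alpha))^2\big]/\epsilon^2$, and then concludes by invoking Corollary~\ref{Cor: small alpha exp concentration by Frob}.
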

\begin{proof}
By Chebyshev's inequality, we have
\[
\P_{\Frob,F}\big(|\Xi_S(\alpha)-\Xi(\alpha)|\geq \epsilon\big)
\leq \frac{\E_{\Frob,F}[(\Xi_S(\alpha)-\Xi(\alpha))^2]}{\epsilon^2}.
\]
Applying Corollary~\ref{Cor: small alpha exp concentration by Frob} completes the proof.
\end{proof}

\begin{proof}[Proof of Theorem~\ref{Thm: Frob pointwise convergence}]
This follows from Proposition~\ref{Prop: Frob pointwise alpha 0}, Proposition~\ref{Prop: Frob pointwise large alpha}, and Corollary~\ref{Cor: pointwise small alpha by Frob}.
\end{proof}

\section*{Acknowledgments}
The first author was supported by project PID2024-156636NB-C21 (MATSE) funded by MCIN/AEI/10.13039/501100011033/ FEDER, UE.
The second author was supported by NSF Grant DMS 2154223.

\appendix
\section{Data related to regression parameters}\label{sec:appendix}

\vspace{-.3in}

\begin{table}
  \input{taula.tex}
  \caption{Regression parameters for numerical semigroups ordered by genus}
\label{tab:regressionparameters}
\end{table}
\begin{table}
  \input{taulaincrements.tex}
  \caption{Incremental regression parameters for numerical semigroups ordered by genus}
  \label{tab:regressionincrements}
\end{table}

\begin{table}
  \input{taula_xi.tex}
  \caption{Regression parameters for numerical semigroups ordered by Frobenius number}
\label{tab:regressionparameters_xi}
\end{table}
\begin{table}
  \input{taulaincrements_xi.tex}
  \caption{Incremental regression parameters for numerical semigroups ordered by Frobenius number}
  \label{tab:regressionincrements_xi}
\end{table}

\newpage

\bibliographystyle{plain}
\bibliography{Bibl}

\end{document}

%% file: taula.tex
\resizebox{\textwidth}{!}{\begin{tabular}{|r|rrrrrrrrr|}\hline$g$ & $n_g$ & $i_{cut}$ & $g-i_{cut}$ & $\bar m^l(g)$ & $\bar b^l(g)$ & $\bar {(R^2)}^l(g)$ & $\bar m^r(g)$ & $\bar b^r(g)$ & $\bar {(R^2)}^r(g)$ \\\hline 
4 &7 
& 2
& 2
&  1.285714 & 0.892857 &1.000000	
& 1.071429 & 0.928571 &1.000000	

\\
5 &12 
& 2
& 3
&  1.466667 & 0.833333 &1.000000	
& 1.000000 & 1.000000 &0.988095	

\\
6 &23 
& 3
& 3
&  1.467391 & 0.786232 &0.966939	
& 1.014493 & 0.983092 &0.987578	

\\
7 &39 
& 3
& 4
&  1.593407 & 0.761294 &0.965950	
& 1.028571 & 0.973993 &0.982316	

\\
8 &67 
& 4
& 4
&  1.564552 & 0.752425 &0.957808	
& 1.017351 & 0.987500 &0.985160	

\\
9 &118 
& 4
& 5
&  1.645951 & 0.732109 &0.955766	
& 1.027495 & 0.976836 &0.987199	

\\
10 &204 
& 5
& 5
&  1.601471 & 0.732255 &0.954763	
& 1.014706 & 0.988922 &0.989516	

\\
11 &343 
& 5
& 6
&  1.670289 & 0.722767 &0.952277	
& 1.028587 & 0.979024 &0.988436	

\\
12 &592 
& 5
& 7
&  1.731912 & 0.710755 &0.950128	
& 1.038660 & 0.969816 &0.989217	

\\
13 &1001 
& 6
& 7
&  1.676346 & 0.716836 &0.951423	
& 1.023164 & 0.983503 &0.991262	

\\
14 &1693 
& 6
& 8
&  1.728643 & 0.709253 &0.949598	
& 1.035185 & 0.973858 &0.990803	

\\
15 &2857 
& 7
& 8
&  1.679839 & 0.714435 &0.951261	
& 1.022708 & 0.984348 &0.992106	

\\
16 &4806 
& 7
& 9
&  1.719642 & 0.709242 &0.949995	
& 1.029914 & 0.978396 &0.992391	

\\
17 &8045 
& 8
& 9
&  1.675454 & 0.715232 &0.951975	
& 1.019532 & 0.987345 &0.993567	

\\
18 &13467 
& 8
& 10
&  1.711125 & 0.710539 &0.950840	
& 1.027168 & 0.980972 &0.993342	

\\
19 &22464 
& 9
& 10
&  1.671552 & 0.716219 &0.952838	
& 1.017476 & 0.989125 &0.994303	

\\
20 &37396 
& 9
& 11
&  1.700746 & 0.712718 &0.951991	
& 1.023471 & 0.984139 &0.994328	

\\
21 &62194 
& 9
& 12
&  1.729594 & 0.709395 &0.951187	
& 1.029903 & 0.978808 &0.994136	

\\
22 &103246 
& 10
& 12
&  1.691816 & 0.714753 &0.953233	
& 1.021040 & 0.986160 &0.994988	

\\
23 &170963 
& 10
& 13
&  1.716208 & 0.712118 &0.952650	
& 1.026231 & 0.981873 &0.994878	

\\
24 &282828 
& 11
& 13
&  1.682236 & 0.717158 &0.954597	
& 1.018202 & 0.988518 &0.995647	

\\
25 &467224 
& 11
& 14
&  1.704460 & 0.714708 &0.954113	
& 1.023333 & 0.984239 &0.995458	

\\
26 &770832 
& 12
& 14
&  1.674005 & 0.719391 &0.955996	
& 1.016200 & 0.990139 &0.996108	

\\
27 &1270267 
& 12
& 15
&  1.693339 & 0.717259 &0.955629	
& 1.020505 & 0.986535 &0.995981	

\\
28 &2091030 
& 13
& 15
&  1.665828 & 0.721578 &0.957419	
& 1.014018 & 0.991891 &0.996561	

\\
29 &3437839 
& 13
& 16
&  1.683657 & 0.719608 &0.957133	
& 1.018233 & 0.988361 &0.996392	

\\
30 &5646773 
& 13
& 17
&  1.700195 & 0.717881 &0.956877	
& 1.022071 & 0.985165 &0.996250	

\\
31 &9266788 
& 14
& 17
&  1.674522 & 0.721850 &0.958645	
& 1.015988 & 0.990149 &0.996769	

\\
32 &15195070 
& 14
& 18
&  1.689834 & 0.720223 &0.958450	
& 1.019744 & 0.987015 &0.996607	

\\
33 &24896206 
& 15
& 18
&  1.666496 & 0.723880 &0.960115	
& 1.014185 & 0.991569 &0.997075	

\\
34 &40761087 
& 15
& 19
&  1.680236 & 0.722403 &0.959990	
& 1.017482 & 0.988810 &0.996937	

\\
35 &66687201 
& 16
& 19
&  1.658973 & 0.725778 &0.961566	
& 1.012391 & 0.992978 &0.997358	

\\
36 &109032500 
& 16
& 20
&  1.671843 & 0.724367 &0.961493	
& 1.015623 & 0.990271 &0.997209	

\\
37 &178158289 
& 17
& 20
&  1.652351 & 0.727485 &0.962978	
& 1.010943 & 0.994100 &0.997590	

\\
38 &290939807 
& 17
& 21
&  1.664034 & 0.726186 &0.962951	
& 1.013799 & 0.991702 &0.997461	

\\
39 &474851445 
& 17
& 22
&  1.675331 & 0.724972 &0.962931	
& 1.016778 & 0.989218 &0.997316	

\\
40 &774614284 
& 18
& 22
&  1.657180 & 0.727812 &0.964366	
& 1.012292 & 0.992873 &0.997672	

\\
41 &1262992840 
& 18
& 23
&  1.667487 & 0.726686 &0.964380	
& 1.014962 & 0.990641 &0.997542	

\\
42 &2058356522 
& 19
& 23
&  1.650779 & 0.729317 &0.965730	
& 1.010806 & 0.994027 &0.997869	

\\
43 &3353191846 
& 19
& 24
&  1.660562 & 0.728218 &0.965779	
& 1.013425 & 0.991835 &0.997737	

\\
44 &5460401576 
& 20
& 24
&  1.645132 & 0.730657 &0.967049	
& 1.009581 & 0.994967 &0.998036	

\\
45 &8888486816 
& 20
& 25
&  1.654148 & 0.729625 &0.967122	
& 1.011943 & 0.992987 &0.997917	

\\
46 &14463633648 
& 21
& 25
&  1.639848 & 0.731892 &0.968315	
& 1.008363 & 0.995903 &0.998192	

\\
47 &23527845502 
& 21
& 26
&  1.648465 & 0.730878 &0.968413	
& 1.010683 & 0.993957 &0.998073	

\\
48 &38260496374 
& 22
& 26
&  1.635178 & 0.732987 &0.969536	
& 1.007356 & 0.996667 &0.998327	

\\
49 &62200036752 
& 22
& 27
&  1.643179 & 0.732027 &0.969649	
& 1.009455 & 0.994903 &0.998218	

\\
50 &101090300128 
& 22
& 28
&  1.650897 & 0.731121 &0.969764	
& 1.011657 & 0.993064 &0.998101	

\\
51 &164253200784 
& 23
& 28
&  1.638484 & 0.733047 &0.970837	
& 1.008413 & 0.995698 &0.998346	

\\
52 &266815155103 
& 23
& 29
&  1.645687 & 0.732183 &0.970964	
& 1.010431 & 0.994010 &0.998237	
\\\hline\end{tabular}}

%% file: taulaincrements.tex
\resizebox{\textwidth}{!}{\begin{tabular}{|r|rrrrrrrrr|}\hline$g$ & $n_g$ & $\Delta i_{cut}$ & $\Delta (g-i_{cut})$ & $\Delta\bar m^l(g)$ & $\Delta \bar b^l(g)$ & $\Delta \bar {(R^2)}^l(g)$ & $\Delta \bar m^r(g)$ & $\Delta \bar b^r(g)$ &$\Delta \bar {(R^2)}^r(g)$ \\\hline 
4 &7 
&2&-1
&1.285714	&0.892857	&1.000000	
&1.071429	&0.928571	&1.000000
\\
5 &12 
&0&1
&0.180952	&-0.059524	&0.000000	
&-0.071429	&0.071429	&-0.011905
\\
6 &23 
&1&0
&0.000725	&-0.047101	&-0.033061	
&0.014493	&-0.016908	&-0.000518
\\
7 &39 
&0&1
&0.126015	&-0.024938	&-0.000988	
&0.014079	&-0.009099	&-0.005261
\\
8 &67 
&1&0
&-0.028854	&-0.008869	&-0.008143	
&-0.011221	&0.013507	&0.002844
\\
9 &118 
&0&1
&0.081399	&-0.020316	&-0.002042	
&0.010145	&-0.010664	&0.002039
\\
10 &204 
&1&0
&-0.044480	&0.000146	&-0.001003	
&-0.012789	&0.012085	&0.002317
\\
11 &343 
&0&1
&0.068818	&-0.009488	&-0.002486	
&0.013881	&-0.009898	&-0.001079
\\
12 &592 
&0&1
&0.061623	&-0.012013	&-0.002149	
&0.010074	&-0.009208	&0.000781
\\
13 &1001 
&1&0
&-0.055566	&0.006082	&0.001294	
&-0.015497	&0.013687	&0.002045
\\
14 &1693 
&0&1
&0.052297	&-0.007583	&-0.001824	
&0.012021	&-0.009645	&-0.000459
\\
15 &2857 
&1&0
&-0.048804	&0.005182	&0.001663	
&-0.012477	&0.010490	&0.001303
\\
16 &4806 
&0&1
&0.039803	&-0.005193	&-0.001266	
&0.007205	&-0.005952	&0.000285
\\
17 &8045 
&1&0
&-0.044188	&0.005991	&0.001980	
&-0.010382	&0.008949	&0.001176
\\
18 &13467 
&0&1
&0.035672	&-0.004693	&-0.001135	
&0.007637	&-0.006373	&-0.000225
\\
19 &22464 
&1&0
&-0.039574	&0.005680	&0.001997	
&-0.009692	&0.008153	&0.000961
\\
20 &37396 
&0&1
&0.029194	&-0.003501	&-0.000846	
&0.005994	&-0.004986	&0.000025
\\
21 &62194 
&0&1
&0.028848	&-0.003323	&-0.000804	
&0.006433	&-0.005331	&-0.000193
\\
22 &103246 
&1&0
&-0.037778	&0.005358	&0.002046	
&-0.008864	&0.007352	&0.000852
\\
23 &170963 
&0&1
&0.024392	&-0.002635	&-0.000583	
&0.005192	&-0.004287	&-0.000110
\\
24 &282828 
&1&0
&-0.033972	&0.005040	&0.001947	
&-0.008030	&0.006644	&0.000769
\\
25 &467224 
&0&1
&0.022224	&-0.002450	&-0.000485	
&0.005131	&-0.004279	&-0.000189
\\
26 &770832 
&1&0
&-0.030455	&0.004684	&0.001883	
&-0.007133	&0.005900	&0.000649
\\
27 &1270267 
&0&1
&0.019334	&-0.002132	&-0.000367	
&0.004305	&-0.003603	&-0.000127
\\
28 &2091030 
&1&0
&-0.027511	&0.004319	&0.001790	
&-0.006488	&0.005355	&0.000580
\\
29 &3437839 
&0&1
&0.017829	&-0.001970	&-0.000286	
&0.004216	&-0.003530	&-0.000169
\\
30 &5646773 
&0&1
&0.016538	&-0.001727	&-0.000255	
&0.003838	&-0.003196	&-0.000141
\\
31 &9266788 
&1&0
&-0.025673	&0.003969	&0.001767	
&-0.006082	&0.004984	&0.000518
\\
32 &15195070 
&0&1
&0.015312	&-0.001627	&-0.000194	
&0.003756	&-0.003134	&-0.000161
\\
33 &24896206 
&1&0
&-0.023338	&0.003657	&0.001664	
&-0.005559	&0.004553	&0.000468
\\
34 &40761087 
&0&1
&0.013740	&-0.001477	&-0.000125	
&0.003296	&-0.002758	&-0.000138
\\
35 &66687201 
&1&0
&-0.021263	&0.003374	&0.001576	
&-0.005091	&0.004168	&0.000420
\\
36 &109032500 
&0&1
&0.012870	&-0.001411	&-0.000073	
&0.003232	&-0.002707	&-0.000148
\\
37 &178158289 
&1&0
&-0.019492	&0.003118	&0.001485	
&-0.004680	&0.003829	&0.000381
\\
38 &290939807 
&0&1
&0.011683	&-0.001299	&-0.000027	
&0.002856	&-0.002399	&-0.000129
\\
39 &474851445 
&0&1
&0.011296	&-0.001214	&-0.000020	
&0.002979	&-0.002484	&-0.000145
\\
40 &774614284 
&1&0
&-0.018151	&0.002840	&0.001435	
&-0.004486	&0.003655	&0.000356
\\
41 &1262992840 
&0&1
&0.010307	&-0.001126	&0.000014	
&0.002670	&-0.002232	&-0.000129
\\
42 &2058356522 
&1&0
&-0.016708	&0.002631	&0.001350	
&-0.004156	&0.003386	&0.000326
\\
43 &3353191846 
&0&1
&0.009783	&-0.001099	&0.000049	
&0.002620	&-0.002192	&-0.000132
\\
44 &5460401576 
&1&0
&-0.015430	&0.002439	&0.001271	
&-0.003844	&0.003132	&0.000299
\\
45 &8888486816 
&0&1
&0.009016	&-0.001033	&0.000072	
&0.002362	&-0.001980	&-0.000119
\\
46 &14463633648 
&1&0
&-0.014300	&0.002267	&0.001193	
&-0.003580	&0.002917	&0.000275
\\
47 &23527845502 
&0&1
&0.008617	&-0.001014	&0.000099	
&0.002320	&-0.001946	&-0.000119
\\
48 &38260496374 
&1&0
&-0.013287	&0.002109	&0.001122	
&-0.003326	&0.002709	&0.000253
\\
49 &62200036752 
&0&1
&0.008001	&-0.000960	&0.000113	
&0.002099	&-0.001764	&-0.000108
\\
50 &101090300128 
&0&1
&0.007718	&-0.000905	&0.000115	
&0.002202	&-0.001840	&-0.000118
\\
51 &164253200784 
&1&0
&-0.012412	&0.001925	&0.001072	
&-0.003244	&0.002635	&0.000245
\\
52 &266815155103 
&0&1
&0.007203	&-0.000863	&0.000127	
&0.002018	&-0.001689	&-0.000109
\\\hline\end{tabular}}

%% file: taula_xi.tex
\resizebox{\textwidth}{!}{\begin{tabular}{|r|rrrrrrrrr|}\hline$ F $ & $N_{\Frob}(F)$ & $i_{cut}$ & $ F-i_{cut}$ & $\bar m^l(F)$ & $\bar b^l(F)$ & $\bar {(R^2)}^l(F)$ & $\bar m^r(F)$ & $\bar b^r(F)$ & $\bar {(R^2)}^r(F)$ \\\hline 


 6 &4 
& 2
& 4
&  1.458333 & 0.833333 &1.000000	
& 0.958333 & 1.037500 &0.982857

\\
 7 &11 
& 2
& 5
&  1.714286 & 0.623377 &1.000000	
& 1.044156 & 0.877922 &0.988300

\\
 8 &10 
& 2
& 6
&  1.837500 & 0.675000 &1.000000	
& 1.007500 & 0.962738 &0.980808

\\
 9 &21 
& 3
& 6
&  1.925926 & 0.600529 &0.966858	
& 1.023129 & 0.898010 &0.991398

\\
 10 &22 
& 3
& 7
&  2.086364 & 0.609091 &0.956358	
& 0.994968 & 0.952273 &0.989251

\\
 11 &51 
& 3
& 8
&  2.067736 & 0.541295 &0.962284	
& 1.066760 & 0.838023 &0.987062

\\
 12 &40 
& 3
& 9
&  2.119792 & 0.617014 &0.953956	
& 1.016354 & 0.937257 &0.986615

\\
 13 &106 
& 4
& 9
&  2.056023 & 0.530552 &0.948962	
& 1.035559 & 0.851314 &0.990741

\\
 14 &103 
& 4
& 10
&  2.211442 & 0.547018 &0.938991	
& 1.032875 & 0.894145 &0.990005

\\
 15 &200 
& 4
& 11
&  2.090200 & 0.536133 &0.945807	
& 1.055048 & 0.842785 &0.988844

\\
 16 &205 
& 4
& 12
&  2.232165 & 0.543933 &0.937656	
& 1.048455 & 0.876095 &0.987145

\\
 17 &465 
& 5
& 12
&  2.097306 & 0.512030 &0.943624	
& 1.039802 & 0.836205 &0.991560

\\
 18 &405 
& 5
& 13
&  2.189712 & 0.542716 &0.934183	
& 1.031675 & 0.886218 &0.991197

\\
 19 &961 
& 5
& 14
&  2.139909 & 0.504628 &0.943059	
& 1.055825 & 0.818779 &0.990011

\\
 20 &900 
& 5
& 15
&  2.223211 & 0.526389 &0.933581	
& 1.051009 & 0.857127 &0.988997

\\
 21 &1828 
& 6
& 15
&  2.092559 & 0.514289 &0.944260	
& 1.035338 & 0.836675 &0.992786

\\
 22 &1913 
& 6
& 16
&  2.203631 & 0.516283 &0.934991	
& 1.040263 & 0.856807 &0.991969

\\
 23 &4096 
& 6
& 17
&  2.139864 & 0.499129 &0.942994	
& 1.050776 & 0.813499 &0.991339

\\
 24 &3578 
& 6
& 18
&  2.188234 & 0.525278 &0.935928	
& 1.045955 & 0.853953 &0.990347

\\
 25 &8273 
& 7
& 18
&  2.102635 & 0.502115 &0.945601	
& 1.035485 & 0.821431 &0.993618

\\
 26 &8175 
& 7
& 19
&  2.188947 & 0.509813 &0.937302	
& 1.039380 & 0.845005 &0.992933

\\
 27 &16132 
& 7
& 20
&  2.116653 & 0.503339 &0.945828	
& 1.043930 & 0.815455 &0.992485

\\
 28 &16267 
& 7
& 21
&  2.189563 & 0.510975 &0.937497	
& 1.046221 & 0.836609 &0.991602

\\
 29 &34903 
& 8
& 21
&  2.102207 & 0.498740 &0.947577	
& 1.033406 & 0.815456 &0.994346

\\
 30 &31822 
& 8
& 22
&  2.154607 & 0.514868 &0.941738	
& 1.033625 & 0.844727 &0.993825

\\
 31 &70854 
& 8
& 23
&  2.116989 & 0.497146 &0.947131	
& 1.040685 & 0.806638 &0.993442

\\
 32 &68681 
& 8
& 24
&  2.173456 & 0.507283 &0.940311	
& 1.042912 & 0.828964 &0.992642

\\
 33 &137391 
& 9
& 24
&  2.085983 & 0.502476 &0.951195	
& 1.029119 & 0.815762 &0.995036

\\
 34 &140661 
& 9
& 25
&  2.151000 & 0.506594 &0.943576	
& 1.033341 & 0.831844 &0.994454

\\
 35 &292081 
& 9
& 26
&  2.104389 & 0.497812 &0.949488	
& 1.036273 & 0.804231 &0.994253

\\
 36 &270258 
& 9
& 27
&  2.142265 & 0.510676 &0.945067	
& 1.037400 & 0.828431 &0.993529

\\
 37 &591443 
& 10
& 27
&  2.083303 & 0.498930 &0.952513	
& 1.027151 & 0.808211 &0.995590

\\
 38 &582453 
& 10
& 28
&  2.135169 & 0.505870 &0.946430	
& 1.030437 & 0.826306 &0.995066

\\
 39 &1156012 
& 10
& 29
&  2.087565 & 0.500594 &0.953180	
& 1.031925 & 0.804466 &0.994911

\\
 40 &1161319 
& 10
& 30
&  2.134425 & 0.506606 &0.946844	
& 1.034914 & 0.820295 &0.994284

\\
 41 &2425711 
& 11
& 30
&  2.075097 & 0.499340 &0.954754	
& 1.024522 & 0.805221 &0.996070

\\
 42 &2287203 
& 11
& 31
&  2.112609 & 0.508974 &0.950857	
& 1.026517 & 0.825342 &0.995611

\\
 43 &4889434 
& 11
& 32
&  2.083076 & 0.498733 &0.954543	
& 1.029057 & 0.799498 &0.995492

\\
 44 &4785671 
& 11
& 33
&  2.121519 & 0.505964 &0.949563	
& 1.031758 & 0.816138 &0.994914

\\
 45 &9575167 
& 12
& 33
&  2.061495 & 0.502038 &0.958259	
& 1.021339 & 0.804792 &0.996506

\\
 46 &9678844 
& 12
& 34
&  2.104942 & 0.506486 &0.952462	
& 1.024691 & 0.818535 &0.996056

\\
 47 &19919902 
& 12
& 35
&  2.073779 & 0.499488 &0.956837	
& 1.026020 & 0.797512 &0.995984

\\
 48 &18896892 
& 12
& 36
&  2.101580 & 0.508222 &0.953743	
& 1.028038 & 0.815173 &0.995455

\\
 49 &40010851 
& 13
& 36
&  2.057738 & 0.500886 &0.959433	
& 1.019613 & 0.800396 &0.996853

\\
 50 &39445886 
& 13
& 37
&  2.092948 & 0.506769 &0.955043	
& 1.022247 & 0.814991 &0.996459

\\
 51 &78794277 
& 13
& 38
&  2.060660 & 0.501712 &0.960165	
& 1.022891 & 0.797266 &0.996405

\\
 52 &78930306 
& 13
& 39
&  2.094451 & 0.506729 &0.955321	
& 1.025721 & 0.810273 &0.995932	
\\\hline\end{tabular}}

%% file: taulaincrements_xi.tex
\resizebox{\textwidth}{!}{\begin{tabular}{|r|rrrrrrrrr|}\hline$ F $ & $N_{\Frob}(F)$ & $\Delta i_{cut}$ & $\Delta (F-i_{cut})$ & $\Delta\bar m^l(F)$ & $\Delta \bar b^l(F)$ & $\Delta \bar {(R^2)}^l(F)$ & $\Delta \bar m^r(F)$ & $\Delta \bar b^r(F)$ &$\Delta \bar {(R^2)}^r(F)$ \\\hline 
 7 &11 
&0&1
&0.255952	&-0.209957	&0.000000
&0.085823	&-0.159578	&0.005443
\\
 8 &10 
&0&1
&0.123214	&0.051623	&0.000000
&-0.036656	&0.084816	&-0.007493
\\
 9 &21 
&1&0
&0.088426	&-0.074471	&-0.033142
&0.015629	&-0.064729	&0.010591
\\
 10 &22 
&0&1
&0.160438	&0.008562	&-0.010500
&-0.028162	&0.054263	&-0.002147
\\
 11 &51 
&0&1
&-0.018627	&-0.067796	&0.005926
&0.071793	&-0.114250	&-0.002189
\\
 12 &40 
&0&1
&0.052055	&0.075719	&-0.008328
&-0.050406	&0.099234	&-0.000447
\\
 13 &106 
&1&0
&-0.063768	&-0.086462	&-0.004994
&0.019205	&-0.085943	&0.004126
\\
 14 &103 
&0&1
&0.155419	&0.016467	&-0.009971
&-0.002684	&0.042831	&-0.000736
\\
 15 &200 
&0&1
&-0.121242	&-0.010885	&0.006816
&0.022173	&-0.051360	&-0.001161
\\
 16 &205 
&0&1
&0.141965	&0.007800	&-0.008151
&-0.006593	&0.033310	&-0.001699
\\
 17 &465 
&1&0
&-0.134859	&-0.031903	&0.005968
&-0.008653	&-0.039891	&0.004416
\\
 18 &405 
&0&1
&0.092406	&0.030686	&-0.009441
&-0.008127	&0.050013	&-0.000363
\\
 19 &961 
&0&1
&-0.049803	&-0.038088	&0.008876
&0.024150	&-0.067439	&-0.001186
\\
 20 &900 
&0&1
&0.083302	&0.021761	&-0.009478
&-0.004816	&0.038348	&-0.001014
\\
 21 &1828 
&1&0
&-0.130652	&-0.012100	&0.010679
&-0.015671	&-0.020452	&0.003789
\\
 22 &1913 
&0&1
&0.111072	&0.001994	&-0.009269
&0.004925	&0.020131	&-0.000817
\\
 23 &4096 
&0&1
&-0.063767	&-0.017154	&0.008002
&0.010513	&-0.043308	&-0.000630
\\
 24 &3578 
&0&1
&0.048370	&0.026149	&-0.007066
&-0.004821	&0.040454	&-0.000992
\\
 25 &8273 
&1&0
&-0.085599	&-0.023163	&0.009673
&-0.010470	&-0.032522	&0.003270
\\
 26 &8175 
&0&1
&0.086312	&0.007698	&-0.008299
&0.003895	&0.023574	&-0.000685
\\
 27 &16132 
&0&1
&-0.072294	&-0.006474	&0.008526
&0.004549	&-0.029550	&-0.000448
\\
 28 &16267 
&0&1
&0.072910	&0.007636	&-0.008331
&0.002291	&0.021154	&-0.000883
\\
 29 &34903 
&1&0
&-0.087356	&-0.012235	&0.010079
&-0.012815	&-0.021153	&0.002744
\\
 30 &31822 
&0&1
&0.052400	&0.016129	&-0.005838
&0.000220	&0.029270	&-0.000521
\\
 31 &70854 
&0&1
&-0.037618	&-0.017722	&0.005392
&0.007060	&-0.038088	&-0.000383
\\
 32 &68681 
&0&1
&0.056467	&0.010137	&-0.006819
&0.002227	&0.022326	&-0.000800
\\
 33 &137391 
&1&0
&-0.087474	&-0.004807	&0.010884
&-0.013793	&-0.013202	&0.002394
\\
 34 &140661 
&0&1
&0.065018	&0.004118	&-0.007619
&0.004222	&0.016081	&-0.000582
\\
 35 &292081 
&0&1
&-0.046611	&-0.008782	&0.005912
&0.002933	&-0.027613	&-0.000201
\\
 36 &270258 
&0&1
&0.037876	&0.012864	&-0.004421
&0.001127	&0.024200	&-0.000724
\\
 37 &591443 
&1&0
&-0.058962	&-0.011746	&0.007446
&-0.010249	&-0.020220	&0.002061
\\
 38 &582453 
&0&1
&0.051866	&0.006940	&-0.006083
&0.003287	&0.018095	&-0.000524
\\
 39 &1156012 
&0&1
&-0.047604	&-0.005276	&0.006750
&0.001487	&-0.021841	&-0.000155
\\
 40 &1161319 
&0&1
&0.046860	&0.006012	&-0.006336
&0.002989	&0.015829	&-0.000627
\\
 41 &2425711 
&1&0
&-0.059329	&-0.007267	&0.007910
&-0.010392	&-0.015074	&0.001786
\\
 42 &2287203 
&0&1
&0.037512	&0.009635	&-0.003897
&0.001995	&0.020121	&-0.000459
\\
 43 &4889434 
&0&1
&-0.029532	&-0.010241	&0.003686
&0.002540	&-0.025844	&-0.000119
\\
 44 &4785671 
&0&1
&0.038442	&0.007230	&-0.004980
&0.002701	&0.016640	&-0.000577
\\
 45 &9575167 
&1&0
&-0.060024	&-0.003926	&0.008696
&-0.010419	&-0.011346	&0.001592
\\
 46 &9678844 
&0&1
&0.043447	&0.004448	&-0.005797
&0.003353	&0.013743	&-0.000450
\\
 47 &19919902 
&0&1
&-0.031163	&-0.006997	&0.004375
&0.001328	&-0.021023	&-0.000073
\\
 48 &18896892 
&0&1
&0.027802	&0.008733	&-0.003094
&0.002019	&0.017661	&-0.000529
\\
 49 &40010851 
&1&0
&-0.043842	&-0.007335	&0.005690
&-0.008426	&-0.014777	&0.001398
\\
 50 &39445886 
&0&1
&0.035210	&0.005883	&-0.004390
&0.002635	&0.014595	&-0.000394
\\
 51 &78794277 
&0&1
&-0.032288	&-0.005057	&0.005122
&0.000644	&-0.017725	&-0.000054
\\
 52 &78930306 
&0&1
&0.033791	&0.005018	&-0.004844
&0.002830	&0.013007	&-0.000473
\\\hline\end{tabular}}